\newtheorem{theorem}{Theorem}
\newtheorem{axiom}[theorem]{Axiom}
\newtheorem{conjecture}[theorem]{Conjecture}
\newtheorem{corollary}[theorem]{Corollary}
\newtheorem{definition}[theorem]{Definition}
\newtheorem{example}[theorem]{Example}
\newtheorem{exercise}[theorem]{Exercise}
\newtheorem{lemma}[theorem]{Lemma}
\newtheorem{proposition}[theorem]{Proposition}
\newtheorem{remark}[theorem]{Remark}
\newenvironment{proof}[1][Proof]{\noindent\textbf{#1.} }{\ \rule{0.5em}{0.5em}}
\let\pdfoutput=\undefined\fi
\chardef\@x10\chardef\@xv60
\def\tcitime{
\def\@time{%
  \@minute\time\@hour\@minute\divide\@hour\@xv
  \ifnum\@hour<\@x 0\fi\the\@hour:%
  \multiply\@hour\@xv\advance\@minute-\@hour
  \ifnum\@minute<\@x 0\fi\the\@minute
  }}%
\def\x@hyperref#1#2#3{%
   % Turn off various catcodes before reading parameter 4
   \catcode`\~ = 12
   \catcode`\$ = 12
   \catcode`\_ = 12
   \catcode`\# = 12
   \catcode`\& = 12
   \catcode`\% = 12
   \y@hyperref{#1}{#2}{#3}%
}
\def\y@hyperref#1#2#3#4{%
   #2\ref{#4}#3
   \catcode`\~ = 13
   \catcode`\$ = 3
   \catcode`\_ = 8
   \catcode`\# = 6
   \catcode`\& = 4
   \catcode`\% = 14
}
\def\QCTOpt[#1]#2{%
  \def\QCTOptB{#1}
  \def\QCTOptA{#2}
}
\def\QCTNOpt#1{%
  \def\QCTOptA{#1}
  \let\QCTOptB\empty
}
\def\Qct{%
  \@ifnextchar[{%
    \QCTOpt}{\QCTNOpt}
}
\def\QCBOpt[#1]#2{%
  \def\QCBOptB{#1}%
  \def\QCBOptA{#2}%
}
\def\QCBNOpt#1{%
  \def\QCBOptA{#1}%
  \let\QCBOptB\empty
}
\def\Qcb{%
  \@ifnextchar[{%
    \QCBOpt}{\QCBNOpt}%
}
\def\PrepCapArgs{%
  \ifx\QCBOptA\empty
    \ifx\QCTOptA\empty
      {}%
    \else
      \ifx\QCTOptB\empty
        {\QCTOptA}%
      \else
        [\QCTOptB]{\QCTOptA}%
      \fi
    \fi
  \else
    \ifx\QCBOptA\empty
      {}%
    \else
      \ifx\QCBOptB\empty
        {\QCBOptA}%
      \else
        [\QCBOptB]{\QCBOptA}%
      \fi
    \fi
  \fi
}
\def\GRAPHICSPS#1{%
 \ifcase\GRAPHICSTYPE%\GRAPHICSTYPE=0
   \special{ps: #1}%
 \or%\GRAPHICSTYPE=1
   \special{language "PS", include "#1"}%
%%%\or%\GRAPHICSTYPE=2
%%%  #1%
 \fi
}%
\def\graffile#1#2#3#4{%
%%% \ifnum\GRAPHICSTYPE=\tw@
%%%  %Following if using psfig
%%%  \@ifundefined{psfig}{\input psfig.tex}{}%
%%%  \psfig{file=#1, height=#3, width=#2}%
%%% \else
  %Following for all others
  % JCS - added BOXTHEFRAME, see below
    \bgroup
	   \@inlabelfalse
       \leavevmode
       \@ifundefined{bbl@deactivate}{\def~{\string~}}{\activesoff}%
        \raise -#4 \BOXTHEFRAME{%
           \hbox to #2{\raise #3\hbox to #2{\null #1\hfil}}}%
    \egroup
}%
\def\draftbox#1#2#3#4{%
 \leavevmode\raise -#4 \hbox{%
  \frame{\rlap{\protect\tiny #1}\hbox to #2%
   {\vrule height#3 width\z@ depth\z@\hfil}%
  }%
 }%
}%
\let\nographics=\@msidraft
\newif\ifwasdraft
\def\GRAPHIC#1#2#3#4#5{%
   \ifnum\@msidraft=\@ne\draftbox{#2}{#3}{#4}{#5}%
   \else\graffile{#1}{#3}{#4}{#5}%
   \fi
}
\def\addtoLaTeXparams#1{%
    \edef\LaTeXparams{\LaTeXparams #1}}%
\newif\ifBoxFrame \BoxFramefalse
\newif\ifOverFrame \OverFramefalse
\newif\ifUnderFrame \UnderFramefalse
\def\BOXTHEFRAME#1{%
   \hbox{%
      \ifBoxFrame
         \frame{#1}%
      \else
         {#1}%
      \fi
   }%
}
\def\doFRAMEparams#1{\BoxFramefalse\OverFramefalse\UnderFramefalse\readFRAMEparams#1\end}%
\def\readFRAMEparams#1{%
 \ifx#1\end%
  \let\next=\relax
  \else
  \ifx#1i\dispkind=\z@\fi
  \ifx#1d\dispkind=\@ne\fi
  \ifx#1f\dispkind=\tw@\fi
  \ifx#1t\addtoLaTeXparams{t}\fi
  \ifx#1b\addtoLaTeXparams{b}\fi
  \ifx#1p\addtoLaTeXparams{p}\fi
  \ifx#1h\addtoLaTeXparams{h}\fi
  \ifx#1X\BoxFrametrue\fi
  \ifx#1O\OverFrametrue\fi
  \ifx#1U\UnderFrametrue\fi
  \ifx#1w
    \ifnum\@msidraft=1\wasdrafttrue\else\wasdraftfalse\fi
    \@msidraft=\@ne
  \fi
  \let\next=\readFRAMEparams
  \fi
 \next
 }%
\def\IFRAME#1#2#3#4#5#6{%
      \bgroup
      \let\QCTOptA\empty
      \let\QCTOptB\empty
      \let\QCBOptA\empty
      \let\QCBOptB\empty
      #6%
      \parindent=0pt
      \leftskip=0pt
      \rightskip=0pt
      \setbox0=\hbox{\QCBOptA}%
      \@tempdima=#1\relax
      \ifOverFrame
          % Do this later
          \typeout{This is not implemented yet}%
          \show\HELP
      \else
         \ifdim\wd0>\@tempdima
            \advance\@tempdima by \@tempdima
            \ifdim\wd0 >\@tempdima
               \setbox1 =\vbox{%
                  \unskip\hbox to \@tempdima{\hfill\GRAPHIC{#5}{#4}{#1}{#2}{#3}\hfill}%
                  \unskip\hbox to \@tempdima{\parbox[b]{\@tempdima}{\QCBOptA}}%
               }%
               \wd1=\@tempdima
            \else
               \textwidth=\wd0
               \setbox1 =\vbox{%
                 \noindent\hbox to \wd0{\hfill\GRAPHIC{#5}{#4}{#1}{#2}{#3}\hfill}\\%
                 \noindent\hbox{\QCBOptA}%
               }%
               \wd1=\wd0
            \fi
         \else
            \ifdim\wd0>0pt
              \hsize=\@tempdima
              \setbox1=\vbox{%
                \unskip\GRAPHIC{#5}{#4}{#1}{#2}{0pt}%
                \break
                \unskip\hbox to \@tempdima{\hfill \QCBOptA\hfill}%
              }%
              \wd1=\@tempdima
           \else
              \hsize=\@tempdima
              \setbox1=\vbox{%
                \unskip\GRAPHIC{#5}{#4}{#1}{#2}{0pt}%
              }%
              \wd1=\@tempdima
           \fi
         \fi
         \@tempdimb=\ht1
         %\advance\@tempdimb by \dp1
         \advance\@tempdimb by -#2
         \advance\@tempdimb by #3
         \leavevmode
         \raise -\@tempdimb \hbox{\box1}%
      \fi
      \egroup%
}%
\def\DFRAME#1#2#3#4#5{%
  \vspace\topsep
  \hfil\break
  \bgroup
     \leftskip\@flushglue
	 \rightskip\@flushglue
	 \parindent\z@
	 \parfillskip\z@skip
     \let\QCTOptA\empty
     \let\QCTOptB\empty
     \let\QCBOptA\empty
     \let\QCBOptB\empty
	 \vbox\bgroup
        \ifOverFrame 
           #5\QCTOptA\par
        \fi
        \GRAPHIC{#4}{#3}{#1}{#2}{\z@}%
        \ifUnderFrame 
           \break#5\QCBOptA
        \fi
	 \egroup
  \egroup
  \vspace\topsep
  \break
}%
\def\FFRAME#1#2#3#4#5#6#7{%
 %If float.sty loaded and float option is 'h', change to 'H'  (gp) 1998/09/05
  \@ifundefined{floatstyle}
    {%floatstyle undefined (and float.sty not present), no change
     \begin{figure}[#1]%
    }
    {%floatstyle DEFINED
	 \ifx#1h%Only the h parameter, change to H
      \begin{figure}[H]%
	 \else
      \begin{figure}[#1]%
	 \fi
	}
  \let\QCTOptA\empty
  \let\QCTOptB\empty
  \let\QCBOptA\empty
  \let\QCBOptB\empty
  \ifOverFrame
    #4
    \ifx\QCTOptA\empty
    \else
      \ifx\QCTOptB\empty
        \caption{\QCTOptA}%
      \else
        \caption[\QCTOptB]{\QCTOptA}%
      \fi
    \fi
    \ifUnderFrame\else
      \label{#5}%
    \fi
  \else
    \UnderFrametrue%
  \fi
  \begin{center}\GRAPHIC{#7}{#6}{#2}{#3}{\z@}\end{center}%
  \ifUnderFrame
    #4
    \ifx\QCBOptA\empty
      \caption{}%
    \else
      \ifx\QCBOptB\empty
        \caption{\QCBOptA}%
      \else
        \caption[\QCBOptB]{\QCBOptA}%
      \fi
    \fi
    \label{#5}%
  \fi
  \end{figure}%
 }%
\def\makeactives{
  \catcode`\"=\active
  \catcode`\;=\active
  \catcode`\:=\active
  \catcode`\'=\active
  \catcode`\~=\active
}
   \gdef\activesoff{%
      \def"{\string"}%
      \def;{\string;}%
      \def:{\string:}%
      \def'{\string'}%
      \def~{\string~}%
      %\bbl@deactivate{"}%
      %\bbl@deactivate{;}%
      %\bbl@deactivate{:}%
      %\bbl@deactivate{'}%
    }
\def\FRAME#1#2#3#4#5#6#7#8{%
 \bgroup
 \ifnum\@msidraft=\@ne
   \wasdrafttrue
 \else
   \wasdraftfalse%
 \fi
 \def\LaTeXparams{}%
 \dispkind=\z@
 \def\LaTeXparams{}%
 \doFRAMEparams{#1}%
 \ifnum\dispkind=\z@\IFRAME{#2}{#3}{#4}{#7}{#8}{#5}\else
  \ifnum\dispkind=\@ne\DFRAME{#2}{#3}{#7}{#8}{#5}\else
   \ifnum\dispkind=\tw@
    \edef\@tempa{\noexpand\FFRAME{\LaTeXparams}}%
    \@tempa{#2}{#3}{#5}{#6}{#7}{#8}%
    \fi
   \fi
  \fi
  \ifwasdraft\@msidraft=1\else\@msidraft=0\fi{}%
  \egroup
 }%
\def\TEXUX#1{"texux"}
\def\func#1{\mathop{\rm #1}\nolimits}%
\long\def\QQQ#1#2{%
     \long\expandafter\def\csname#1\endcsname{#2}}%
\long\def\QQA#1#2{}%
\def\QTR#1#2{{\csname#1\endcsname {#2}}}%
\def\EXPAND#1[#2]#3{}%
\def\NOEXPAND#1[#2]#3{}%
\def\LaTeXparent#1{}%
\def\ChildStyles#1{}%
\def\ChildDefaults#1{}%
\def\QTagDef#1#2#3{}%
  \providecommand{\UNICODE}[2][]{\protect\rule{.1in}{.1in}}
  \providecommand{\U}[1]{\protect\rule{.1in}{.1in}}
\def\QQfnmark#1{\footnotemark}
 \def\abstract{%
  \if@twocolumn
   \section*{Abstract (Not appropriate in this style!)}%
   \else \small 
   \begin{center}{\bf Abstract\vspace{-.5em}\vspace{\z@}}\end{center}%
   \quotation 
   \fi
  }%
   \def\registered{\relax\ifmmode{}\r@gistered
                    \else$\m@th\r@gistered$\fi}%
 \def\r@gistered{^{\ooalign
  {\hfil\raise.07ex\hbox{$\scriptstyle\rm\text{R}$}\hfil\crcr
  \mathhexbox20D}}}}{}%
\newdimen\theight
\def\newfmtname{LaTeX2e}
  \DeclareOldFontCommand{\rm}{\normalfont\rmfamily}{\mathrm}
  \DeclareOldFontCommand{\sf}{\normalfont\sffamily}{\mathsf}
  \DeclareOldFontCommand{\tt}{\normalfont\ttfamily}{\mathtt}
  \DeclareOldFontCommand{\bf}{\normalfont\bfseries}{\mathbf}
  \DeclareOldFontCommand{\it}{\normalfont\itshape}{\mathit}
  \DeclareOldFontCommand{\sl}{\normalfont\slshape}{\@nomath\sl}
  \DeclareOldFontCommand{\sc}{\normalfont\scshape}{\@nomath\sc}
\def\alpha{{\Greekmath 010B}}%
\def\beta{{\Greekmath 010C}}%
\def\gamma{{\Greekmath 010D}}%
\def\delta{{\Greekmath 010E}}%
\def\epsilon{{\Greekmath 010F}}%
\def\zeta{{\Greekmath 0110}}%
\def\eta{{\Greekmath 0111}}%
\def\theta{{\Greekmath 0112}}%
\def\iota{{\Greekmath 0113}}%
\def\kappa{{\Greekmath 0114}}%
\def\lambda{{\Greekmath 0115}}%
\def\mu{{\Greekmath 0116}}%
\def\nu{{\Greekmath 0117}}%
\def\xi{{\Greekmath 0118}}%
\def\pi{{\Greekmath 0119}}%
\def\rho{{\Greekmath 011A}}%
\def\sigma{{\Greekmath 011B}}%
\def\tau{{\Greekmath 011C}}%
\def\upsilon{{\Greekmath 011D}}%
\def\phi{{\Greekmath 011E}}%
\def\chi{{\Greekmath 011F}}%
\def\psi{{\Greekmath 0120}}%
\def\omega{{\Greekmath 0121}}%
\def\varepsilon{{\Greekmath 0122}}%
\def\vartheta{{\Greekmath 0123}}%
\def\varpi{{\Greekmath 0124}}%
\def\varrho{{\Greekmath 0125}}%
\def\varsigma{{\Greekmath 0126}}%
\def\varphi{{\Greekmath 0127}}%
\def\nabla{{\Greekmath 0272}}
\def\FindBoldGroup{%
   {\setbox0=\hbox{$\mathbf{x\global\edef\theboldgroup{\the\mathgroup}}$}}%
}
\def\Greekmath#1#2#3#4{%
    \if@compatibility
        \ifnum\mathgroup=\symbold
           \mathchoice{\mbox{\boldmath$\displaystyle\mathchar"#1#2#3#4$}}%
                      {\mbox{\boldmath$\textstyle\mathchar"#1#2#3#4$}}%
                      {\mbox{\boldmath$\scriptstyle\mathchar"#1#2#3#4$}}%
                      {\mbox{\boldmath$\scriptscriptstyle\mathchar"#1#2#3#4$}}%
        \else
           \mathchar"#1#2#3#4% 
        \fi 
    \else 
        \FindBoldGroup
        \ifnum\mathgroup=\theboldgroup % For 2e
           \mathchoice{\mbox{\boldmath$\displaystyle\mathchar"#1#2#3#4$}}%
                      {\mbox{\boldmath$\textstyle\mathchar"#1#2#3#4$}}%
                      {\mbox{\boldmath$\scriptstyle\mathchar"#1#2#3#4$}}%
                      {\mbox{\boldmath$\scriptscriptstyle\mathchar"#1#2#3#4$}}%
        \else
           \mathchar"#1#2#3#4% 
        \fi     	    
	  \fi}
\newif\ifGreekBold  \GreekBoldfalse
\let\SAVEPBF=\pbf
\def\pbf{\GreekBoldtrue\SAVEPBF}%
  \newcounter{equationnumber}  
  \def\mathletters{%
     \addtocounter{equation}{1}
     \edef\@currentlabel{\theequation}%
     \setcounter{equationnumber}{\c@equation}
     \setcounter{equation}{0}%
     \edef\theequation{\@currentlabel\noexpand\alph{equation}}%
  }
    \def\BibTeX{{\rm B\kern-.05em{\sc i\kern-.025em b}\kern-.08em
                 T\kern-.1667em\lower.7ex\hbox{E}\kern-.125emX}}}{}%
\def\AmS{{\protect\usefont{OMS}{cmsy}{m}{n}%
                A\kern-.1667em\lower.5ex\hbox{M}\kern-.125emS}}}{}%
\def\@@eqncr{\let\@tempa\relax
    \ifcase\@eqcnt \def\@tempa{& & &}\or \def\@tempa{& &}%
      \else \def\@tempa{&}\fi
     \@tempa
     \if@eqnsw
        \iftag@
           \@taggnum
        \else
           \@eqnnum\stepcounter{equation}%
        \fi
     \fi
     \global\tag@false
     \global\@eqnswtrue
     \global\@eqcnt\z@\cr}
\def\TCItag{\@ifnextchar*{\@TCItagstar}{\@TCItag}}
\def\@TCItag#1{%
    \global\tag@true
    \global\def\@taggnum{(#1)}%
    \global\def\@currentlabel{#1}}
\def\@TCItagstar*#1{%
    \global\tag@true
    \global\def\@taggnum{#1}%
    \global\def\@currentlabel{#1}}
\def\tint{\msi@int\textstyle\int}%
\def\tiint{\msi@int\textstyle\iint}%
\def\tiiint{\msi@int\textstyle\iiint}%
\def\tiiiint{\msi@int\textstyle\iiiint}%
\def\tidotsint{\msi@int\textstyle\idotsint}%
\def\toint{\msi@int\textstyle\oint}%
\newtoks\temptoksa
\newtoks\temptoksb
\newtoks\temptoksc
\def\msi@int#1#2{%
 \def\@temp{{#1#2\the\temptoksc_{\the\temptoksa}^{\the\temptoksb}}}%   
 \futurelet\@nextcs
 \@int
}
\def\@int{%
   \ifx\@nextcs\limits
      \typeout{Found limits}%
      \temptoksc={\limits}%
	  \let\@next\@intgobble%
   \else\ifx\@nextcs\nolimits
      \typeout{Found nolimits}%
      \temptoksc={\nolimits}%
	  \let\@next\@intgobble%
   \else
      \typeout{Did not find limits or no limits}%
      \temptoksc={}%
      \let\@next\msi@limits%
   \fi\fi
   \@next   
}%
\def\@intgobble#1{%
   \typeout{arg is #1}%
   \msi@limits
}
\def\msi@limits{%
   \temptoksa={}%
   \temptoksb={}%
   \@ifnextchar_{\@limitsa}{\@limitsb}%
}
\def\@limitsa_#1{%
   \temptoksa={#1}%
   \@ifnextchar^{\@limitsc}{\@temp}%
}
\def\@limitsb{%
   \@ifnextchar^{\@limitsc}{\@temp}%
}
\def\@limitsc^#1{%
   \temptoksb={#1}%
   \@ifnextchar_{\@limitsd}{\@temp}%   
}
\def\@limitsd_#1{%
   \temptoksa={#1}%
   \@temp
}
\def\dint{\msi@int\displaystyle\int}%
\def\diint{\msi@int\displaystyle\iint}%
\def\diiint{\msi@int\displaystyle\iiint}%
\def\diiiint{\msi@int\displaystyle\iiiint}%
\def\didotsint{\msi@int\displaystyle\idotsint}%
\def\doint{\msi@int\displaystyle\oint}%
\def\dsum{\mathop{\displaystyle \sum }}%
\def\ExitTCILatex{\makeatother }
\if@compatibility\message{amsmath already loaded}\fi\aftergroup\ExitTCILatex}
\if@compatibility\message{amstex already loaded}\fi\aftergroup\ExitTCILatex}
\if@compatibility\message{amsgen already loaded}\fi\aftergroup\ExitTCILatex}
\let\DOTSI\relax
\def\RIfM@{\relax\ifmmode}%
\def\FN@{\futurelet\next}%
\def\iint{\DOTSI\intno@\tw@\FN@\ints@}%
\def\iiint{\DOTSI\intno@\thr@@\FN@\ints@}%
\def\iiiint{\DOTSI\intno@4 \FN@\ints@}%
\def\idotsint{\DOTSI\intno@\z@\FN@\ints@}%
\def\ints@{\findlimits@\ints@@}%
\newif\iflimtoken@
\newif\iflimits@
\def\findlimits@{\limtoken@true\ifx\next\limits\limits@true
 \else\ifx\next\nolimits\limits@false\else
 \limtoken@false\ifx\ilimits@\nolimits\limits@false\else
 \ifinner\limits@false\else\limits@true\fi\fi\fi\fi}%
\def\multint@{\int\ifnum\intno@=\z@\intdots@                          %1
 \else\intkern@\fi                                                    %2
 \ifnum\intno@>\tw@\int\intkern@\fi                                   %3
 \ifnum\intno@>\thr@@\int\intkern@\fi                                 %4
 \int}%                                                               %5
\def\multintlimits@{\intop\ifnum\intno@=\z@\intdots@\else\intkern@\fi
 \ifnum\intno@>\tw@\intop\intkern@\fi
 \ifnum\intno@>\thr@@\intop\intkern@\fi\intop}%
\def\intic@{%
    \mathchoice{\hskip.5em}{\hskip.4em}{\hskip.4em}{\hskip.4em}}%
\def\negintic@{\mathchoice
 {\hskip-.5em}{\hskip-.4em}{\hskip-.4em}{\hskip-.4em}}%
\def\ints@@{\iflimtoken@                                              %1
 \def\ints@@@{\iflimits@\negintic@
   \mathop{\intic@\multintlimits@}\limits                             %2
  \else\multint@\nolimits\fi                                          %3
  \eat@}%                                                             %4
 \else                                                                %5
 \def\ints@@@{\iflimits@\negintic@
  \mathop{\intic@\multintlimits@}\limits\else
  \multint@\nolimits\fi}\fi\ints@@@}%
\def\intkern@{\mathchoice{\!\!\!}{\!\!}{\!\!}{\!\!}}%
\def\plaincdots@{\mathinner{\cdotp\cdotp\cdotp}}%
\def\intdots@{\mathchoice{\plaincdots@}%
 {{\cdotp}\mkern1.5mu{\cdotp}\mkern1.5mu{\cdotp}}%
 {{\cdotp}\mkern1mu{\cdotp}\mkern1mu{\cdotp}}%
 {{\cdotp}\mkern1mu{\cdotp}\mkern1mu{\cdotp}}}%
\def\RIfM@{\relax\protect\ifmmode}
\def\text{\RIfM@\expandafter\text@\else\expandafter\mbox\fi}
\let\nfss@text\text
\def\text@#1{\mathchoice
   {\textdef@\displaystyle\f@size{#1}}%
   {\textdef@\textstyle\tf@size{\firstchoice@false #1}}%
   {\textdef@\textstyle\sf@size{\firstchoice@false #1}}%
   {\textdef@\textstyle \ssf@size{\firstchoice@false #1}}%
   \glb@settings}
\def\textdef@#1#2#3{\hbox{{%
                    \everymath{#1}%
                    \let\f@size#2\selectfont
                    #3}}}
\newif\iffirstchoice@
\def\Let@{\relax\iffalse{\fi\let\\=\cr\iffalse}\fi}%
\def\vspace@{\def\vspace##1{\crcr\noalign{\vskip##1\relax}}}%
\def\multilimits@{\bgroup\vspace@\Let@
 \baselineskip\fontdimen10 \scriptfont\tw@
 \advance\baselineskip\fontdimen12 \scriptfont\tw@
 \lineskip\thr@@\fontdimen8 \scriptfont\thr@@
 \lineskiplimit\lineskip
 \vbox\bgroup\ialign\bgroup\hfil$\m@th\scriptstyle{##}$\hfil\crcr}%
\def\Sb{_\multilimits@}%
\def\endSb{\crcr\egroup\egroup\egroup}%
\def\Sp{^\multilimits@}%
\newdimen\ex@
\def\rightarrowfill@#1{$#1\m@th\mathord-\mkern-6mu\cleaders
 \hbox{$#1\mkern-2mu\mathord-\mkern-2mu$}\hfill
 \mkern-6mu\mathord\rightarrow$}%
\def\leftarrowfill@#1{$#1\m@th\mathord\leftarrow\mkern-6mu\cleaders
 \hbox{$#1\mkern-2mu\mathord-\mkern-2mu$}\hfill\mkern-6mu\mathord-$}%
\def\leftrightarrowfill@#1{$#1\m@th\mathord\leftarrow
\mkern-6mu\cleaders
 \hbox{$#1\mkern-2mu\mathord-\mkern-2mu$}\hfill
 \mkern-6mu\mathord\rightarrow$}%
\def\overrightarrow{\mathpalette\overrightarrow@}%
\def\overrightarrow@#1#2{\vbox{\ialign{##\crcr\rightarrowfill@#1\crcr
 \noalign{\kern-\ex@\nointerlineskip}$\m@th\hfil#1#2\hfil$\crcr}}}%
\def\overleftarrow{\mathpalette\overleftarrow@}%
\def\overleftarrow@#1#2{\vbox{\ialign{##\crcr\leftarrowfill@#1\crcr
 \noalign{\kern-\ex@\nointerlineskip}$\m@th\hfil#1#2\hfil$\crcr}}}%
\def\overleftrightarrow{\mathpalette\overleftrightarrow@}%
\def\overleftrightarrow@#1#2{\vbox{\ialign{##\crcr
   \leftrightarrowfill@#1\crcr
 \noalign{\kern-\ex@\nointerlineskip}$\m@th\hfil#1#2\hfil$\crcr}}}%
\def\underrightarrow{\mathpalette\underrightarrow@}%
\def\underrightarrow@#1#2{\vtop{\ialign{##\crcr$\m@th\hfil#1#2\hfil
  $\crcr\noalign{\nointerlineskip}\rightarrowfill@#1\crcr}}}%
\def\underleftarrow{\mathpalette\underleftarrow@}%
\def\underleftarrow@#1#2{\vtop{\ialign{##\crcr$\m@th\hfil#1#2\hfil
  $\crcr\noalign{\nointerlineskip}\leftarrowfill@#1\crcr}}}%
\def\underleftrightarrow{\mathpalette\underleftrightarrow@}%
\def\underleftrightarrow@#1#2{\vtop{\ialign{##\crcr$\m@th
  \hfil#1#2\hfil$\crcr
 \noalign{\nointerlineskip}\leftrightarrowfill@#1\crcr}}}%
\def\qopnamewl@#1{\mathop{\operator@font#1}\nlimits@}
\let\nlimits@\displaylimits
\def\setboxz@h{\setbox\z@\hbox}
\def\varlim@#1#2{\mathop{\vtop{\ialign{##\crcr
 \hfil$#1\m@th\operator@font lim$\hfil\crcr
 \noalign{\nointerlineskip}#2#1\crcr
 \noalign{\nointerlineskip\kern-\ex@}\crcr}}}}
 \def\rightarrowfill@#1{\m@th\setboxz@h{$#1-$}\ht\z@\z@
  $#1\copy\z@\mkern-6mu\cleaders
  \hbox{$#1\mkern-2mu\box\z@\mkern-2mu$}\hfill
  \mkern-6mu\mathord\rightarrow$}
\def\leftarrowfill@#1{\m@th\setboxz@h{$#1-$}\ht\z@\z@
  $#1\mathord\leftarrow\mkern-6mu\cleaders
  \hbox{$#1\mkern-2mu\copy\z@\mkern-2mu$}\hfill
  \mkern-6mu\box\z@$}
\def\projlim{\qopnamewl@{proj\,lim}}
\def\injlim{\qopnamewl@{inj\,lim}}
\def\varinjlim{\mathpalette\varlim@\rightarrowfill@}
\def\varprojlim{\mathpalette\varlim@\leftarrowfill@}
\def\varliminf{\mathpalette\varliminf@{}}
\def\varliminf@#1{\mathop{\underline{\vrule\@depth.2\ex@\@width\z@
   \hbox{$#1\m@th\operator@font lim$}}}}
\def\varlimsup{\mathpalette\varlimsup@{}}
\def\varlimsup@#1{\mathop{\overline
  {\hbox{$#1\m@th\operator@font lim$}}}}
\def\align{\@verbatim \frenchspacing\@vobeyspaces \@alignverbatim
You are using the "align" environment in a style in which it is not defined.}
\let\csname endalign*\endcsname =\endtrivlist
\def\alignat{\@verbatim \frenchspacing\@vobeyspaces \@alignatverbatim
You are using the "alignat" environment in a style in which it is not defined.}
\let\csname endalignat*\endcsname =\endtrivlist
\def\xalignat{\@verbatim \frenchspacing\@vobeyspaces \@xalignatverbatim
You are using the "xalignat" environment in a style in which it is not defined.}
\let\csname endxalignat*\endcsname =\endtrivlist
\def\gather{\@verbatim \frenchspacing\@vobeyspaces \@gatherverbatim
You are using the "gather" environment in a style in which it is not defined.}
\let\csname endgather*\endcsname =\endtrivlist
\def\multiline{\@verbatim \frenchspacing\@vobeyspaces \@multilineverbatim
You are using the "multiline" environment in a style in which it is not defined.}
\let\csname endmultiline*\endcsname =\endtrivlist
\def\arrax{\@verbatim \frenchspacing\@vobeyspaces \@arraxverbatim
You are using a type of "array" construct that is only allowed in AmS-LaTeX.}
\def\tabulax{\@verbatim \frenchspacing\@vobeyspaces \@tabulaxverbatim
You are using a type of "tabular" construct that is only allowed in AmS-LaTeX.}
\let\csname endarrax*\endcsname =\endtrivlist
\let\csname endtabulax*\endcsname =\endtrivlist
 \def\endequation{%
     \ifmmode\ifinner % FLEQN hack
      \iftag@
        \addtocounter{equation}{-1} % undo the increment made in the begin part
        $\hfil
           \displaywidth\linewidth\@taggnum\egroup \endtrivlist
        \global\tag@false
        \global\@ignoretrue   
      \else
        $\hfil
           \displaywidth\linewidth\@eqnnum\egroup \endtrivlist
        \global\tag@false
        \global\@ignoretrue 
      \fi
     \else   
      \iftag@
        \addtocounter{equation}{-1} % undo the increment made in the begin part
        \eqno \hbox{\@taggnum}
        \global\tag@false%
        $$\global\@ignoretrue
      \else
        \eqno \hbox{\@eqnnum}% $$ BRACE MATCHING HACK
        $$\global\@ignoretrue
      \fi
     \fi\fi
 } 
 \newif\iftag@ \tag@false
 \def\TCItag{\@ifnextchar*{\@TCItagstar}{\@TCItag}}
 \def\@TCItag#1{%
     \global\tag@true
     \global\def\@taggnum{(#1)}%
     \global\def\@currentlabel{#1}}
 \def\@TCItagstar*#1{%
     \global\tag@true
     \global\def\@taggnum{#1}%
     \global\def\@currentlabel{#1}}
     \def\tag{\@ifnextchar*{\@tagstar}{\@tag}}
     \def\@tag#1{%
         \global\tag@true
         \global\def\@taggnum{(#1)}}
     \def\@tagstar*#1{%
         \global\tag@true
         \global\def\@taggnum{#1}}
\def\dfrac#1#2{{\displaystyle {#1 \over #2}}}%
\begin{document}

\title{Asymptotic analysis of pollution filtration through thin random
fissures between two porous media\thanks{%
This work has been supported by the Comit\'{e} Mixte Franco-Marocain under
the PHC MA/08/183.}}
\author{A. Brillard$^{\text{a}}$\thanks{%
Corresponding author. Tel.: +33 3 89 33 63 10; fax: +33 3 89 33 63 19.%
\newline
\textit{Email addresses}: Alain.Brillard@uha.fr (A.\ Brillard),
m.eljarroudi@uae.ma (M.\ El Jarroudi).}, M. El Jarroudi$^{\text{b}}$, M. El
Merzguioui$^{\text{b}}$ \\
%EndAName
$^{\text{a}}${\small Universit\'{e} de Haute-Alsace,}\\
{\small Laboratoire de Gestion des Risques et Environnement,}\\
{\small 25\ rue de Chemnitz, F-68200\ Mulhouse, France}\\
$^{\text{b}}${\small Universit\'{e} Abdelmalek Essa\^{a}di, FST\ Tanger}\\
{\small D\'{e}partement de Math\'{e}matiques, B.P. 416, Tanger, Maroc}}
\date{ }
\maketitle

\begin{abstract}
We describe the asymptotic behavior of a filtration problem from a
contaminated porous medium to a non-contaminated porous medium through thin
vertical fissures of fixed height $h>0$, of random thinness of order $%
\varepsilon $ and which are $\varepsilon $-periodically distributed. We
compute the limit velocity of the flow and the limit flux of pollutant at
the interfaces between the two porous media and the intermediate one.
\end{abstract}

\section{Introduction}

We consider a porous medium which is contaminated by some pollutant and
which communicates with another porous and non-contaminated medium through
vertical fissures of height $h>0$, of random thinness of order $\varepsilon
>0$, and which are periodically disposed. Each of these porous media is
given a $\varepsilon $-periodic structure.\ Instead of considering a Stokes
problem in each of these porous media, we assume that the velocity of the
fluid is governed by a Darcy law with periodic permeability matrix.

The purpose of this work is to determine the influence of the fissures on
the transport of the contaminant into the non-contaminated medium, computing
the global flux of pollutant which penetrates in this non-contaminated
medium and the asymptotic velocity of the fluid which flows through the
fissures.

Let $\Omega $ be a bounded, smooth and open subset of $\mathbb{R}^{3}$, with
boundary $\Gamma $, such that

\begin{equation*}
\left\{ 
\begin{array}{rll}
\Omega ^{+} & = & \Omega \cap \left\{ x_{3}>0\right\} \neq \varnothing , \\ 
\Omega _{h}^{-} & = & \Omega \cap \left\{ x_{3}<-h\right\} \neq \varnothing .%
\end{array}%
\right.
\end{equation*}

Let $\Sigma \times \left\{ 0\right\} =\partial \Omega \cap \left\{
x_{3}=0\right\} $.$\ \Sigma $ is a bounded and smooth subset of $\mathbb{R}%
^{2}$. We define

\begin{equation*}
\left\{ 
\begin{array}{rll}
\Gamma _{0}^{+} & = & \Sigma \times \left\{ 0\right\} , \\ 
\Gamma _{h}^{-} & = & \Sigma \times \left\{ -h\right\} , \\ 
Y_{h} & = & \left\{ x\in \Omega \mid \left( x_{1},x_{2}\right) \in \Sigma 
\text{, }-h<x_{3}<0\right\} , \\ 
\Gamma ^{+} & = & \partial \Omega ^{+}\backslash \Gamma _{0}^{+}, \\ 
\Gamma ^{-} & = & \partial \Omega _{h}^{-}\backslash \Gamma _{h}^{-}.%
\end{array}%
\right.
\end{equation*}

The domain $\Omega $ is thus equal to $\Omega ^{+}\cup \Gamma _{0}^{+}\cup
Y_{h}\cup \Gamma _{h}^{-}\cup \Omega _{h}^{-}$.

Let $\left( \Pi ,\Upsilon ,P\right) $ be some probability space and $\left(
T\left( t\right) \right) _{t\in \mathbb{R}}$ be a group of transformations
on $\left( \Pi ,\Upsilon \right) $, that is satisfying%
\begin{equation*}
\left\{ 
\begin{array}{rlll}
T\left( 0\right) & = & Id_{\Pi }, &  \\ 
T\left( t_{1}+t_{2}\right) & = & T\left( t_{1}\right) \circ T\left(
t_{2}\right) & \forall t_{1},t_{2}\in \mathbb{R}, \\ 
P\left( T^{-1}\left( t\right) A\right) & = & P\left( A\right) & \forall A\in
\Upsilon \text{, }\forall t\in \mathbb{R},%
\end{array}%
\right.
\end{equation*}%
where $Id_{\Pi }$ is the identity map on $\Pi $ and the set $\left\{ \left(
t,\omega \right) \in 
%TCIMACRO{\U{211d} }%
%BeginExpansion
\mathbb{R}
%EndExpansion
\times \Pi \mid T\left( t\right) \omega \in A\right\} $ is $dt\times dP$
measurable, for every $A\in \Upsilon $. We suppose that $T$ is ergodic (or
metrically transitive), which means that every $A\in \Upsilon $ such that $%
T\left( t\right) A=A$, for every $t\in \mathbb{R}$, has a probability $%
P\left( A\right) $ equal to $0$ or $1$.

We introduce some random processes $q$ and $r$ defined on $\mathbb{R}\times
\Pi $ and satisfying the following conditions:

\begin{enumerate}
\item $q\left( t,\omega \right) $ is a stationary random process, that is,
for every positive integer $n$, every points $t_{1},\cdots ,t_{n}$, every $t$
in $\mathbb{R}$, and every $B\in \mathcal{B}\left( \mathbb{R}\right) $, one
has%
\begin{equation*}
\begin{array}{l}
P\left( \left\{ \omega \mid q\left( t+t_{1},\omega \right) ,\cdots ,q\left(
t+t_{n},\omega \right) \in B\right\} \right)  \\ 
\quad =P\left( \left\{ \omega \mid q\left( t_{1},T\left( t\right) \omega
\right) ,\cdots ,q\left( t_{n},T\left( t\right) \omega \right) \in B\right\}
\right) ,%
\end{array}%
\end{equation*}

where $\mathcal{B}\left( \mathbb{R}\right) $ is the Borel $\sigma $-algebra
on $\mathbb{R}$. Since $T$ preserves the measure $P$, the above equality
implies that the joint distribution of $\left\{ q\left( t_{1}\right) ,\cdots
,q\left( t_{n}\right) \right\} $ is the same as the joint distribution of $%
\left\{ q\left( t_{1}+t\right) ,\cdots ,q\left( t_{n}+t\right) \right\} $,
for every $t$ in $\mathbb{R}$.

\item The derivatives $\frac{d^{m}q}{dt^{m}}$ and $\frac{d^{m}r}{dt^{m}}$
exist for $m=1,2,3$ and there exist non-random constants $c_{1}$, $c_{2}$
and $c_{3}$ such that the following bounds hold true with probability 1%
\begin{equation}
0<c_{1}\leq q\left( t,\omega \right) \leq c_{2}<1\text{ ; }\left\vert
r\left( t,\omega \right) \right\vert \leq 1\text{ ; }\left\vert \dfrac{d^{m}q%
}{dt^{m}}\right\vert ,\left\vert \dfrac{d^{m}r}{dt^{m}}\right\vert \leq
c_{3},  \label{equ5}
\end{equation}
\end{enumerate}

From the properties of $T$ and $q$, we derive the ergodic theorem (see, for
example, \cite{Gikh} and \cite{Vent})%
\begin{equation}
\forall n\in \mathbb{Z}^{\ast }:\left\langle q^{n}\left( 0\right)
\right\rangle =\underset{\mathcal{T}\rightarrow \infty }{\lim }\frac{1}{2%
\mathcal{T}}\dint_{-\mathcal{T}}^{\mathcal{T}}q^{n}\left( t,\omega \right)
dt,  \label{Ergo}
\end{equation}%
almost surely, where the symbol $\left\langle .\right\rangle $ stands for
the mathematical expectation with respect to the measure $P$.

Let $\left( \alpha _{i}\left( \omega \right) \right) _{i\in 
%TCIMACRO{\U{2124} }%
%BeginExpansion
\mathbb{Z}
%EndExpansion
}$ and $\left( \beta _{i}\left( \omega \right) \right) _{i\in 
%TCIMACRO{\U{2124} }%
%BeginExpansion
\mathbb{Z}
%EndExpansion
}$ be sequences of random variables satisfying%
\begin{equation}
\left\vert \alpha _{i}\left( \omega \right) \right\vert \leq c_{4}\text{ ; }%
\left\vert \beta _{i}\left( \omega \right) \right\vert \leq c_{4}\text{, }%
\forall i\in 
%TCIMACRO{\U{2124} }%
%BeginExpansion
\mathbb{Z}
%EndExpansion
,  \label{random}
\end{equation}%
with probability 1, where $c_{4}$ is a non-random constant. We define, for
every $i,j\in 
%TCIMACRO{\U{2124} }%
%BeginExpansion
\mathbb{Z}
%EndExpansion
$, the fissure $Y_{\varepsilon ,ij}\left( \omega \right) $ as%
\begin{equation*}
Y_{\varepsilon ,ij}\left( \omega \right) =\left\{ 
\begin{array}{l}
x\in 
%TCIMACRO{\U{211d} }%
%BeginExpansion
\mathbb{R}
%EndExpansion
^{3}\mid \varepsilon a_{i}^{-}\left( -\varepsilon ^{-\theta }x_{3}\right)
<x_{1}-i\varepsilon <\varepsilon a_{i}^{+}\left( -\varepsilon ^{-\theta
}x_{3}\right) , \\ 
\quad \varepsilon a_{j}^{-}\left( -\varepsilon ^{-\theta }x_{3}\right)
<x_{2}-j\varepsilon <\varepsilon a_{j}^{+}\left( -\varepsilon ^{-\theta
}x_{3}\right) \text{, }x_{3}\in \left] -h,0\right[%
\end{array}%
\right\} ,
\end{equation*}%
with $0<\varepsilon <1$, $0<\theta <2/3$ and $a_{i}^{\pm }\left( z\right)
=r\left( z+\beta _{i}\left( \omega \right) ,\omega \right) \pm q\left(
z+\alpha _{i}\left( \omega \right) ,\omega \right) /2$. Let $I_{\varepsilon
}\left( \omega \right) =\left\{ \left( i,j\right) \in 
%TCIMACRO{\U{2124} }%
%BeginExpansion
\mathbb{Z}
%EndExpansion
^{2}\mid Y_{\varepsilon ,ij}\left( \omega \right) \subset Y_{h}\right\} $.\
We also define the sets%
\begin{equation*}
\left\{ 
\begin{array}{rllrll}
\Gamma _{0,\varepsilon ,ij}^{+}\left( \omega \right) & = & \partial
Y_{\varepsilon ,ij}\left( \omega \right) \cap \Gamma _{0}^{+}, & \Gamma
_{0,\varepsilon }^{+}\left( \omega \right) & = & \underset{\left( i,j\right)
\in I_{\varepsilon }\left( \omega \right) }{\cup }\Gamma _{0,\varepsilon
,ij}^{+}\left( \omega \right) , \\ 
\Gamma _{h,\varepsilon ,ij}^{-}\left( \omega \right) & = & \partial
Y_{\varepsilon ,ij}\left( \omega \right) \cap \Gamma _{h}^{-}, & \Gamma
_{h,\varepsilon }^{-}\left( \omega \right) & = & \underset{\left( i,j\right)
\in I_{\varepsilon }\left( \omega \right) }{\cup }\Gamma _{h,\varepsilon
,ij}^{-}\left( \omega \right) , \\ 
\Lambda _{\varepsilon }\left( \omega \right) & = & \partial Y_{\varepsilon
}\left( \omega \right) \backslash \left( \Gamma _{0,\varepsilon }^{+}\cup
\Gamma _{h,\varepsilon }^{-}\right) \left( \omega \right) & Y_{\varepsilon
}\left( \omega \right) & = & \underset{\left( i,j\right) \in I_{\varepsilon
}\left( \omega \right) }{\cup }Y_{\varepsilon ,ij}\left( \omega \right) .%
\end{array}%
\right.
\end{equation*}

Let $Z=\left] -1/2,1/2\right[ ^{3}$ be the unit cube of $%
%TCIMACRO{\U{211d} }%
%BeginExpansion
\mathbb{R}
%EndExpansion
^{3}$ and assume that it can be decomposed as $Z=Z^{1}\cup S\cup Z^{2}$,
where $Z^{1}$ and $Z^{2}$ are two disjoint, open and connected sets
separated by the smooth surface $S$ (Fig.\ 1).\FRAME{fhFU}{3.6354cm}{2.6755cm%
}{0pt}{\Qcb{A\ 2D view of the periodic structure of the porous media.}}{}{%
porous.eps}{\special{language "Scientific Word";type
"GRAPHIC";maintain-aspect-ratio TRUE;display "USEDEF";valid_file "F";width
3.6354cm;height 2.6755cm;depth 0pt;original-width 2.8072in;original-height
2.0531in;cropleft "0";croptop "0.9983";cropright "1.0004";cropbottom
"0";filename 'porous.eps';file-properties "XNPEU";}}

We assume that $\Omega ^{+}$ and $\Omega _{h}^{-}$ are two $\varepsilon $%
-periodic porous media which communicate through the fissures $%
Y_{\varepsilon ,ij}\left( \omega \right) $.\FRAME{fhFU}{6.2955cm}{1.8561cm}{%
0pt}{\Qcb{The porous media and the fissures.}}{}{channels1.eps}{\special%
{language "Scientific Word";type "GRAPHIC";maintain-aspect-ratio
TRUE;display "USEDEF";valid_file "F";width 6.2955cm;height 1.8561cm;depth
0pt;original-width 4.9018in;original-height 1.4062in;cropleft "0";croptop
"1";cropright "1";cropbottom "0";filename 'channels1.eps';file-properties
"XNPEU";}}

We set%
\begin{equation}
\left\{ 
\begin{array}{rllrll}
\Omega _{f}^{+,\varepsilon } & = & \Omega ^{+}\cap \left( \underset{k\in 
\mathbb{Z}^{3}}{\cup }\left( \varepsilon Z^{1}+k\varepsilon \right) \right) ,
& \Omega _{h,f}^{-,\varepsilon } & = & \Omega _{h}^{-}\cap \left( \underset{%
k\in \mathbb{Z}^{3}}{\cup }\left( \varepsilon Z^{1}+k\varepsilon \right)
\right) , \\ 
\Omega _{s}^{+,\varepsilon } & = & \Omega ^{+}\cap \left( \underset{k\in 
\mathbb{Z}^{3}}{\cup }\left( \varepsilon Z^{2}+k\varepsilon \right) \right) ,
& \Omega _{h,s}^{-,\varepsilon } & = & \Omega _{h}^{-}\cap \left( \underset{%
k\in \mathbb{Z}^{3}}{\cup }\left( \varepsilon Z^{2}+k\varepsilon \right)
\right) , \\ 
S_{\varepsilon }^{+} & = & \Omega ^{+}\cap \left( \underset{k\in \mathbb{Z}%
^{3}}{\cup }\left( \varepsilon S+k\varepsilon \right) \right) , & 
S_{\varepsilon }^{-} & = & \Omega _{h}^{-}\cap \left( \underset{k\in \mathbb{%
Z}^{3}}{\cup }\left( \varepsilon S+k\varepsilon \right) \right) .%
\end{array}%
\right.  \label{str-perio}
\end{equation}

We suppose that $\Omega _{f}^{+,\varepsilon }$ (resp. $\Omega
_{h,f}^{-,\varepsilon }$) is the portion of $\Omega ^{+}$ (resp. $\Omega
_{h}^{-}$) consisting of the pores which are filled in with some fluid and $%
\Omega _{s}^{+,\varepsilon }$ (resp. $\Omega _{h,s}^{-,\varepsilon }$) is
the portion of $\Omega ^{+}$ (resp. $\Omega _{h}^{-}$) consisting of the
non-porous rocks. We suppose that%
\begin{equation*}
\left\{ 
\begin{array}{ccc}
\partial \Omega _{f}^{+,\varepsilon }\cap \partial Y_{\varepsilon }\left(
\omega \right) & = & \Gamma _{0,\varepsilon }^{+}\left( \omega \right) , \\ 
\partial \Omega _{h,f}^{-,\varepsilon }\cap \partial Y_{\varepsilon }\left(
\omega \right) & = & \Gamma _{h,\varepsilon }^{-}\left( \omega \right) .%
\end{array}%
\right.
\end{equation*}

We define the fluid part of the domain as%
\begin{equation*}
\Omega _{f}^{\varepsilon }\left( \omega \right) =\Omega _{f}^{+,\varepsilon
}\cup \left( \Gamma _{0,\varepsilon }^{+}\cup Y_{\varepsilon }\cup \Gamma
_{h,\varepsilon }^{-}\right) \left( \omega \right) \cup \Omega
_{h,f}^{-,\varepsilon }.
\end{equation*}

Let $f\in C\left( \Omega ^{+}\right) $, $g^{+}\in \mathbf{L}^{2}\left(
\Omega ^{+};%
%TCIMACRO{\U{211d} }%
%BeginExpansion
\mathbb{R}
%EndExpansion
^{3}\right) \ $and $g^{-}\in \mathbf{L}^{2}\left( \Omega _{h}^{-};%
%TCIMACRO{\U{211d} }%
%BeginExpansion
\mathbb{R}
%EndExpansion
^{3}\right) $ be functions satisfying%
\begin{equation*}
supp\left( f\right) \subset \Omega ^{+}\text{ and }f\geq 0\text{ \ in }%
\Omega ^{+}\text{ ; }supp\left( g^{+}\right) \subset \Omega ^{+}\text{ ; }%
supp\left( g^{-}\right) \subset \Omega _{h}^{-}.
\end{equation*}

We consider in $\Omega _{f}^{\varepsilon }\left( \omega \right) $ the
reaction-diffusion problem with first-order reaction%
\begin{equation}
\left\{ 
\begin{array}{rlll}
-D\Delta u_{\varepsilon }+v_{\varepsilon }\cdot \nabla u_{\varepsilon }+%
\mathcal{R}u_{\varepsilon } & = & f & \text{in }\Omega _{f}^{\varepsilon
}\left( \omega \right) , \\ 
u_{\varepsilon } & = & 0 & \text{on }\Gamma ^{+}\cup \Gamma ^{-}, \\ 
\dfrac{\partial u_{\varepsilon }}{\partial n} & = & 0 & \text{on }%
S_{\varepsilon }^{+}\cup S_{\varepsilon }^{-}\cup \Lambda _{\varepsilon
}\left( \omega \right) ,%
\end{array}%
\right.  \label{eq9}
\end{equation}%
where\ $u_{\varepsilon }$ is the concentration of the pollutant, $D=D_{mol}$
is the molecular diffusion coefficient, $\mathcal{R}$ is a nonnegative
reaction coefficient, $n$ is the unit outer normal and $v_{\varepsilon }$ is
the velocity of the fluid, which is the solution of the Darcy-Stokes problems%
\begin{equation}
\left\{ 
\begin{array}{rlll}
\mu ^{+}\left( K_{\varepsilon }^{+}\right) ^{-1}v_{\varepsilon ,d}-\nabla
p_{\varepsilon ,d} & = & g^{+} & \text{in }\Omega _{f}^{+,\varepsilon }, \\ 
\mu ^{-}\left( K_{\varepsilon }^{-}\right) ^{-1}v_{\varepsilon ,d}-\nabla
p_{\varepsilon ,d} & = & g^{-} & \text{in }\Omega _{h,f}^{-,\varepsilon },
\\ 
\func{div}\left( v_{\varepsilon ,d}\right) & = & 0 & \text{in }\Omega
_{f}^{+,\varepsilon }\cup \Omega _{h,f}^{-,\varepsilon }, \\ 
v_{\varepsilon ,d}\cdot n & = & 0 & \text{on }\partial \Omega
_{f}^{+,\varepsilon }\cup \partial \Omega _{h,f}^{-,\varepsilon }\cup \Gamma
, \\ 
-\mu \varepsilon ^{2}\Delta v_{\varepsilon ,s}+\nabla p_{\varepsilon ,s} & =
& 0 & \text{in }Y_{\varepsilon }\left( \omega \right) , \\ 
\func{div}\left( v_{\varepsilon ,s}\right) & = & 0 & \text{in }%
Y_{\varepsilon }\left( \omega \right) , \\ 
v_{\varepsilon ,s} & = & 0 & \text{on\ }\Lambda _{\varepsilon }\left( \omega
\right) ,%
\end{array}%
\right.  \label{equ10}
\end{equation}%
with the following interface conditions

\begin{equation}
\left\{ 
\begin{array}{rlll}
\left( v_{\varepsilon ,s}\right) _{3} & = & \left( v_{\varepsilon ,d}\right)
_{3} & \text{on }\Gamma _{0,\varepsilon }^{+}\left( \omega \right) \cup
\Gamma _{h,\varepsilon }^{-}\left( \omega \right) , \\ 
\mu \varepsilon ^{2}\dfrac{\partial \left( v_{\varepsilon ,s}\right) _{3}}{%
\partial x_{3}}\mid _{x_{3}=0,-h} & = & p_{\varepsilon ,d}-p_{\varepsilon ,s}
& \text{on }\Gamma _{0,\varepsilon }^{+}\left( \omega \right) \cup \Gamma
_{h,\varepsilon }^{-}\left( \omega \right) , \\ 
\mu \varepsilon ^{2}\dfrac{\partial \left( v_{\varepsilon ,s}\right) _{\tau }%
}{\partial x_{3}} & = & -\gamma \left( K_{\varepsilon }^{+}\right)
^{-1/2}\left( v_{\varepsilon ,s}\right) _{\tau } & \text{on }\Gamma
_{0,\varepsilon }^{+}\left( \omega \right) , \\ 
\mu \varepsilon ^{2}\dfrac{\partial \left( v_{\varepsilon ,s}\right) _{\tau }%
}{\partial x_{3}} & = & \gamma \left( K_{\varepsilon }^{-}\right)
^{-1/2}\left( v_{\varepsilon ,s}\right) _{\tau } & \text{on }\Gamma
_{h,\varepsilon }^{-}\left( \omega \right) ,%
\end{array}%
\right.  \label{equ11}
\end{equation}%
where:

\begin{itemize}
\item $v_{\varepsilon ,d}$ and $p_{\varepsilon ,d}$ are respectively Darcy's
velocity and pressure in $\Omega _{f}^{+,\varepsilon }$ and $\Omega
_{h,f}^{-,\varepsilon }$,

\item $K_{\varepsilon }^{+}$ and $K_{\varepsilon }^{-}$ are the absolute
permeability matrices in $\Omega _{f}^{+,\varepsilon }$ and $\Omega
_{h,f}^{-,\varepsilon }$ respectively,

\item $v_{\varepsilon ,s}$ and $p_{\varepsilon ,s}$ are respectively the
velocity and the pressure of the Stokes flow in the fissures,

\item $\mu ^{+}$ (resp. $\mu ^{-}$, $\mu $) is the viscosity coefficient in $%
\Omega _{f}^{+,\varepsilon }$ (resp. in $\Omega _{h,f}^{-,\varepsilon }$, $%
Y_{\varepsilon }\left( \omega \right) $),

\item $\left( v_{\varepsilon ,s}\right) _{\tau }=\left( \left(
v_{\varepsilon ,s}\right) _{1},\left( v_{\varepsilon ,s}\right) _{2}\right) $
is the tangential velocity.
\end{itemize}

We suppose that the $3\times 3$ matrices $K_{\varepsilon }^{+}$ and $%
K_{\varepsilon }^{-}$ are defined through the $Z$-periodic construction: $%
K_{\varepsilon }^{+}=K^{+}\left( x/\varepsilon \right) $ and $K_{\varepsilon
}^{-}=K^{-}\left( x/\varepsilon \right) $, where $K^{+}$ and $K^{-}$ are
bounded, symmetric and positive definite, and that $\mu ^{\pm }$ and $\mu $
are positive constants. In the above interface conditions, (\ref{equ11})$%
_{1} $ means the continuity of the mass flux through the interfaces $\Gamma
_{0,\varepsilon }^{+}\left( \omega \right) $ and $\Gamma _{h,\varepsilon
}^{-}\left( \omega \right) $, (\ref{equ11})$_{2}$ represents the continuity
of the normal stress through the corresponding interface, and the two last
equalities of (\ref{equ11}) represent the Beavers-Joseph-Saffman conditions
on the tangential stress, with some nonnegative slippage coefficient $\gamma 
$ (see \cite{Arbo}, \cite{Beav} and \cite{Saff}).

We first describe the asymptotic behaviour of the solution $v_{\varepsilon }$
of (\ref{equ10})-(\ref{equ11}) using $\Gamma $-convergence methods (see \cite%
{Att} and \cite{Dal}, for the definition and the properties of this
variational convergence). We prove that the asymptotic velocities $%
v_{0,d}^{+}$ (in $\Omega ^{+}$) and $v_{0,d}^{-}$ (in $\Omega _{h}^{-}$) and
the asymptotic pressures $p_{0}^{+}$ (in $\Omega ^{+}$) and $p_{0}^{-}$ (in $%
\Omega _{h}^{-}$) are linked through the Darcy laws (\ref{Darcy}) in $\Omega
^{+}$ and $\Omega _{h}^{-}$ respectively.\ We also describe the asymptotic
behaviour of the velocity $v_{\varepsilon ,s}$ (see Corollary \ref%
{corollary1}). We then describe the asymptotic behaviour of the solution $%
u_{\varepsilon }$ of (\ref{eq9}) using the energy method. We prove that the
flux of pollutant through $\Gamma _{0}^{+}$ is given through (\ref{asympt})$%
_{3}$, while the flux through $\Gamma _{h}^{-}$ is given through (\ref%
{asympt})$_{5}$.

Homogenization theory introduced in the few past decades (see for instance 
\cite{Ben-Lio-Pap} and \cite{Jik-Koz-Ole} and the references therein) gives
powerful tools for the description of equivalent media which are
heterogeneous at a microscopic level. The homogenization of transport
problems of chemical products through porous media has been studied by many
authors (see for example \cite{Horn} and \cite{Mik}). A model of random
fissures has already been studied in \cite{Khrus} for a problem of
radiophysics posed in $%
%TCIMACRO{\U{211d} }%
%BeginExpansion
\mathbb{R}
%EndExpansion
_{2}^{+}\cup \left( \cup _{j\in 
%TCIMACRO{\U{2124} }%
%BeginExpansion
\mathbb{Z}
%EndExpansion
}Q_{j}^{\varepsilon }\left( \omega \right) \right) \cup 
%TCIMACRO{\U{211d} }%
%BeginExpansion
\mathbb{R}
%EndExpansion
_{2,h}^{-}$, where $Q_{j}^{\varepsilon }\left( \omega \right) $ is the $j$%
-th fissure and $%
%TCIMACRO{\U{211d} }%
%BeginExpansion
\mathbb{R}
%EndExpansion
_{2,h}^{-}=\left\{ x\in 
%TCIMACRO{\U{211d} }%
%BeginExpansion
\mathbb{R}
%EndExpansion
^{2}\mid x_{2}<-h\right\} $, $%
%TCIMACRO{\U{211d} }%
%BeginExpansion
\mathbb{R}
%EndExpansion
_{2}^{+}=\left\{ x\in 
%TCIMACRO{\U{211d} }%
%BeginExpansion
\mathbb{R}
%EndExpansion
^{2}\mid x_{2}>0\right\} $. We here adopt the shape of the fissures used in 
\cite{Khrus}, that we extend to a 3D case, in order to model in a quite
realistic way the constitution of the soil.

The paper is organized as follows. In the following section, we introduce
and study the appropriate local coordinates inside the fissures. In the
third section, we study the convergence of the velocities. The fourth part
is concerned with the asymptotic analysis of the contaminant transport
problem.\ We first deal with the case where $\mathcal{R}=0$.\ We build the
solutions of local problems in the neighborhood of the fissures in order to
pass to the limit in the original problem and study their asymptotic
properties. The fluxes across $\Gamma _{0}^{+}$ and $\Gamma _{h}^{-}$ given
in this section are those obtained in (\ref{asympt})$_{3,4}$, respectively,
with $\mathcal{R}=0$.\ In the last part of this section, we also consider
the case of a dispersive contaminant ($\mathcal{R}>0$) with random
dispersion in the fissures (see Remark \ref{Remark2}). We finally give the
asymptotic behavior of the fully reaction-diffusion problem. We here again
introduce the solutions of local problems.

\section{Local coordinates in the fissures}

In the fissure $Y_{\varepsilon ,ij}\left( \omega \right) $, $\left(
i,j\right) \in I_{\varepsilon }\left( \omega \right) $, we define, for a
fixed event $\omega $ for which the conditions (\ref{equ5}) and (\ref{random}%
) are satisfied, $\xi _{1}=x_{1}-i\varepsilon $, $\xi
_{2}=x_{2}-j\varepsilon $, $z=-x_{3}$, and introduce the curvilinear
coordinates $t\in \left( 0,h\right) $ and $y_{1},y_{2}\in \left(
-\varepsilon /2,\varepsilon /2\right) $.\ Thus doing, the lateral boundary
of the fissure coincides with the planes $y_{1},y_{2}=\pm \varepsilon /2$.
These coordinates are described through%
\begin{equation}
\left\{ 
\begin{array}{lll}
\Phi _{1,\varepsilon }\left( \xi _{1},\xi _{2},z,y_{1},y_{2},t\right) & = & 
\xi _{1}-a_{i}^{+}\left( \varepsilon ^{-\theta }z\right) \left( \dfrac{%
\varepsilon }{2}+y_{1}\right) -a_{i}^{-}\left( \varepsilon ^{-\theta
}z\right) \left( \dfrac{\varepsilon }{2}-y_{1}\right) =0, \\ 
\Phi _{2,\varepsilon }\left( \xi _{1},\xi _{2},z,y_{1},y_{2},t\right) & = & 
\xi _{2}-a_{j}^{+}\left( \varepsilon ^{-\theta }z\right) \left( \dfrac{%
\varepsilon }{2}+y_{2}\right) -a_{j}^{-}\left( \varepsilon ^{-\theta
}z\right) \left( \dfrac{\varepsilon }{2}-y_{2}\right) =0, \\ 
\Phi _{3,\varepsilon }\left( \xi _{1},\xi _{2},z,y_{1},y_{2},t\right) & = & 
z-\varepsilon ^{\theta }\psi _{\varepsilon }\left( \dfrac{\xi _{1}}{%
\varepsilon },\dfrac{\xi _{2}}{\varepsilon },\varepsilon ^{-\theta }t\right)
=0.%
\end{array}%
\right.  \label{coor-loc1}
\end{equation}

Defining $\zeta _{1}=\xi _{1}/\varepsilon ,\zeta _{2}=\xi _{2}/\varepsilon $
and $\tau =\varepsilon ^{-\theta }t$, the orthogonality conditions of the
coordinates given through%
\begin{equation*}
\left\{ 
\begin{array}{ccl}
\dfrac{\partial \Phi _{1,\varepsilon }}{\partial \zeta _{1}}\dfrac{\partial
\Phi _{3,\varepsilon }}{\partial \zeta _{1}}+\dfrac{\partial \Phi
_{1,\varepsilon }}{\partial z}\dfrac{\partial \Phi _{3,\varepsilon }}{%
\partial z} & = & 0, \\ 
\dfrac{\partial \Phi _{2,\varepsilon }}{\partial \zeta _{2}}\dfrac{\partial
\Phi _{3,\varepsilon }}{\partial \zeta _{2}}+\dfrac{\partial \Phi
_{2,\varepsilon }}{\partial z}\dfrac{\partial \Phi _{3,\varepsilon }}{%
\partial z} & = & 0%
\end{array}%
\right.
\end{equation*}%
and the condition $\psi _{\varepsilon }\left( 0,0,\varepsilon ^{-\theta
}t\right) =\tau $ imply the following Cauchy system

\begin{equation}
\left\{ 
\begin{array}{rll}
\dfrac{\partial \psi _{\varepsilon }}{\partial \zeta _{1}} & = & \varepsilon
^{2\left( 1-\theta \right) }\left( 
\begin{array}{l}
\left( a_{i}^{-}\left( \psi _{\varepsilon }\right) \right) ^{\prime }\dfrac{%
\zeta _{1}-a_{i}^{+}\left( \psi _{\varepsilon }\right) }{a_{i}^{+}\left(
\psi _{\varepsilon }\right) -a_{i}^{-}\left( \psi _{\varepsilon }\right) }
\\ 
\quad -\left( a_{i}^{+}\left( \psi _{\varepsilon }\right) \right) ^{\prime }%
\dfrac{\zeta _{1}-a_{i}^{-}\left( \psi _{\varepsilon }\right) }{%
a_{i}^{+}\left( \psi _{\varepsilon }\right) -a_{i}^{-}\left( \psi
_{\varepsilon }\right) }%
\end{array}%
\right) , \\ 
\dfrac{\partial \psi _{\varepsilon }}{\partial \zeta _{2}} & = & \varepsilon
^{2\left( 1-\theta \right) }\left( 
\begin{array}{l}
\left( a_{j}^{-}\left( \psi _{\varepsilon }\right) \right) ^{\prime }\dfrac{%
\zeta _{2}-a_{j}^{+}\left( \psi _{\varepsilon }\right) }{a_{j}^{+}\left(
\psi _{\varepsilon }\right) -a_{j}^{-}\left( \psi _{\varepsilon }\right) }
\\ 
\quad -\left( a_{j}^{+}\left( \psi _{\varepsilon }\right) \right) ^{\prime }%
\dfrac{\zeta _{2}-a_{j}^{-}\left( \psi _{\varepsilon }\right) }{%
a_{j}^{+}\left( \psi _{\varepsilon }\right) -a_{j}^{-}\left( \psi
_{\varepsilon }\right) }%
\end{array}%
\right) , \\ 
\psi _{\varepsilon }\left( 0,0,\varepsilon ^{-\theta }t\right) & = & \tau ,%
\end{array}%
\right.  \label{coor-loc2}
\end{equation}%
which has to be solved.

\begin{lemma}
\label{lemma1}

\begin{enumerate}
\item The system (\ref{coor-loc2}) has a unique solution $\psi _{\varepsilon
}\left( \zeta _{1},\zeta _{2},t\right) =\tau +\psi _{\varepsilon }^{1}\left(
\zeta _{1},\tau \right) +\psi _{\varepsilon }^{2}\left( \zeta _{2},\tau
\right) $, with $\psi _{\varepsilon }^{1}\left( 0,\tau \right) =\psi
_{\varepsilon }^{2}\left( 0,\tau \right) =0$.

\item For every $k\in 
%TCIMACRO{\U{2115} }%
%BeginExpansion
\mathbb{N}
%EndExpansion
^{\ast }$, every $\zeta _{1}$, $\zeta _{2}\in \left[ -k,k\right] $, and
every $\tau \in 
%TCIMACRO{\U{211d} }%
%BeginExpansion
\mathbb{R}
%EndExpansion
$, one has, when $\varepsilon $ is close to $0$%
\begin{equation*}
\left\{ 
\begin{array}{rlll}
\psi _{\varepsilon }\left( \zeta _{1},\zeta _{2},t\right) & = & \tau
+O\left( \varepsilon ^{2\left( 1-\theta \right) }\right) , &  \\ 
\dfrac{\partial \psi _{\varepsilon }}{\partial \tau } & = & 1+O\left(
\varepsilon ^{2\left( 1-\theta \right) }\right) , &  \\ 
\dfrac{\partial \psi _{\varepsilon }}{\partial \zeta _{\alpha }},\dfrac{%
\partial ^{2}\psi _{\varepsilon }}{\partial \tau ^{2}},\dfrac{\partial
^{2}\psi _{\varepsilon }}{\partial \zeta _{\alpha }\partial \tau } & = & 
O\left( \varepsilon ^{2\left( 1-\theta \right) }\right) & \alpha =1,2.%
\end{array}%
\right.
\end{equation*}
\end{enumerate}
\end{lemma}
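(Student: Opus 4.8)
Put $\delta:=\varepsilon^{2(1-\theta)}$; since $0<\theta<2/3$ one has $\delta\to0$ as $\varepsilon\to0$, and $\tau$ is kept fixed throughout. The plan is to insert the separated ansatz $\psi_\varepsilon=\tau+\psi_\varepsilon^1(\zeta_1,\tau)+\psi_\varepsilon^2(\zeta_2,\tau)$ into (\ref{coor-loc2}): because the right-hand side of the first equation contains no $\zeta_2$ and that of the second no $\zeta_1$, the system decouples (modulo a coupling through the common argument $\psi_\varepsilon$, discussed below) into two scalar Cauchy problems
\begin{equation*}
\partial_{\zeta_1}\psi_\varepsilon^1=\delta\,\mathcal F_i(\zeta_1,\tau+\psi_\varepsilon^1),\qquad
\partial_{\zeta_2}\psi_\varepsilon^2=\delta\,\mathcal F_j(\zeta_2,\tau+\psi_\varepsilon^2),\qquad
\psi_\varepsilon^1(0,\tau)=\psi_\varepsilon^2(0,\tau)=0,
\end{equation*}
with $\mathcal F_i(\zeta,w)=[(a_i^-)'(w)(\zeta-a_i^+(w))-(a_i^+)'(w)(\zeta-a_i^-(w))]/(a_i^+(w)-a_i^-(w))$ and $\mathcal F_j$ analogous. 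The preliminary facts I would record, from (\ref{equ5}), (\ref{random}) and $a_i^\pm(w)=r(w+\beta_i(\omega),\omega)\pm q(w+\alpha_i(\omega),\omega)/2$, are that $a_i^\pm$ together with its first three derivatives is bounded uniformly in $w\in\mathbb{R}$, and that $a_i^+(w)-a_i^-(w)=q(w+\alpha_i(\omega),\omega)\ge c_1>0$ stays bounded away from $0$. Hence on each slab $\{|\zeta|\le k\}$ the quantities $\mathcal F_i$, $\partial_w\mathcal F_i$, $\partial_w^2\mathcal F_i$ are bounded by a constant $C(k)$ uniformly in $w\in\mathbb{R}$ and $\tau\in\mathbb{R}$; the very existence of $\partial_w^2\mathcal F_i$ is what forces the hypothesis $q,r\in C^3$.

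\emph{Existence and uniqueness.} As $\mathcal F_i(\zeta,\cdot)$ is globally Lipschitz, the Cauchy--Lipschitz theorem yields, for each $k\in\mathbb{N}^{\ast}$ and each $\tau$, a unique $\psi_\varepsilon^1(\cdot,\tau)$ solving the first problem on $[-k,k]$ (hence on $\mathbb{R}$), and likewise $\psi_\varepsilon^2$; the prescribed values at $\zeta_\alpha=0$ make these unique. Then $\psi_\varepsilon:=\tau+\psi_\varepsilon^1+\psi_\varepsilon^2$ solves (\ref{coor-loc2}) and is unique in that class. The one delicate point is that in (\ref{coor-loc2}) the nonlinearity is evaluated at the \emph{full} $\psi_\varepsilon$ rather than at $\tau+\psi_\varepsilon^1$ alone; since $\psi_\varepsilon^1,\psi_\varepsilon^2=O(\delta)$ by the estimate below, this leaves an $O(\delta^2)$ defect, which one removes either by running a Banach fixed point on the integral form of the full coupled system (a contraction on each slab for $\varepsilon$ small, the contraction constant being of order $\delta$) and then checking that its fixed point is separated to that order, or by carrying the $O(\delta^2)$ term through the subsequent error analysis.

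\emph{Asymptotic estimates.} Integrating the first problem over $[0,\zeta_1]$ and using $|\mathcal F_i|\le C(k)$ gives $|\psi_\varepsilon^1(\zeta_1,\tau)|\le\delta C(k)|\zeta_1|=O(\delta)$, and likewise for $\psi_\varepsilon^2$, so $\psi_\varepsilon=\tau+O(\delta)$; also $\partial_{\zeta_1}\psi_\varepsilon=\delta\mathcal F_i=O(\delta)$ and $\partial_{\zeta_2}\psi_\varepsilon=\delta\mathcal F_j=O(\delta)$ directly from the equations. For the $\tau$-derivatives I would invoke smooth dependence on the parameter $\tau$ (legitimate since $\mathcal F_i\in C^2$ in $w$) and differentiate the first equation: writing $\phi:=\partial_\tau\psi_\varepsilon^1$,
\begin{equation*}
\partial_{\zeta_1}\phi=\delta\,\partial_w\mathcal F_i(\zeta_1,\tau+\psi_\varepsilon^1)\,(1+\phi),\qquad\phi(0,\tau)=0,
\end{equation*}
so Gronwall's lemma gives $|1+\phi|\le e^{\delta C(k)|\zeta_1|}$, i.e.\ $\partial_\tau\psi_\varepsilon^1=O(\delta)$ and likewise $\partial_\tau\psi_\varepsilon^2=O(\delta)$, whence $\partial_\tau\psi_\varepsilon=1+O(\delta)$; the same identity then gives $\partial_{\zeta_1}\partial_\tau\psi_\varepsilon=O(\delta)$, and $\partial_{\zeta_2}\partial_\tau\psi_\varepsilon=O(\delta)$ from the $\psi_\varepsilon^2$ equation. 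Differentiating once more in $\tau$, using $|\partial_w^2\mathcal F_i|\le C(k)$ and $(1+\phi)^2=O(1)$, and applying Gronwall a second time gives $\partial_\tau^2\psi_\varepsilon=O(\delta)$. All constants depend only on $k$ and the constants in (\ref{equ5}), never on $\tau$, so every estimate is uniform in $\tau\in\mathbb{R}$, as claimed.

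\emph{Main obstacle.} The real difficulty is not any single estimate but making the separated ansatz rigorous: system (\ref{coor-loc2}) is formally overdetermined (two first-order equations for one unknown), so a product-of-slabs function can satisfy it only up to the $O(\delta^2)$ defect above. Handling that defect via the fixed-point argument — whose contraction constant is the small number of order $\delta=\varepsilon^{2(1-\theta)}$ — while simultaneously keeping every constant uniform in the \emph{unbounded} parameter $\tau$ (which is precisely why one needs the global, $w$-uniform bounds $c_1\le q\le c_2$ and $|d^mq/dt^m|,|d^mr/dt^m|\le c_3$ rather than merely local ones) is where the substantive work lies; the Gronwall propagation of the $O(\delta)$ smallness to the first and second derivatives is then routine.
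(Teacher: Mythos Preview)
Your approach follows the same skeleton as the paper's --- bounds on $a_i^\pm$ and their derivatives from (\ref{equ5}) to establish Lipschitz continuity, Cauchy--Lipschitz for existence and uniqueness, then reading off the $O(\varepsilon^{2(1-\theta)})$ estimates --- but you are considerably more careful. The paper's own proof is only a few lines: it notes $a_i^+-a_i^-=q_i\ge c_1>0$ and $|(a_i^\pm)'|\le c_3$, asserts the right-hand sides are bounded and Lipschitz, invokes Cauchy--Lipschitz, claims the separated form follows from $\psi_\varepsilon(0,0,\tau)=\tau$, and then declares all of Part~2 to be a direct consequence of Part~1 and (\ref{equ5}).

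In particular, you correctly flag a point the paper glosses over: system (\ref{coor-loc2}) is two first-order equations for one unknown, and the separated ansatz $\psi_\varepsilon=\tau+\psi_\varepsilon^1(\zeta_1,\tau)+\psi_\varepsilon^2(\zeta_2,\tau)$ cannot satisfy it \emph{exactly}, since $\partial_{\zeta_1}\psi_\varepsilon^1$ would have to equal $\delta\,\mathcal F_i(\zeta_1,\tau+\psi_\varepsilon^1+\psi_\varepsilon^2)$, which depends on $\zeta_2$ through $\psi_\varepsilon^2$. Your remedy --- quantify the defect as $O(\delta^2)$ and absorb it either by a contraction-mapping argument or by carrying it through the subsequent estimates --- is the honest way to make the statement rigorous; the paper does not address this at all. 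Your explicit Gronwall propagation for $\partial_\tau\psi_\varepsilon$ and $\partial_\tau^2\psi_\varepsilon$ is likewise more than the paper provides. For the purposes of everything downstream (Lemmas~\ref{lemma2} and~\ref{lemme3}), only the $O(\varepsilon^{2(1-\theta)})$ bounds matter, so your more scrupulous version lands in the same place.
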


\begin{proof}
1. Observe that $a_{i}^{+}\left( \psi _{\varepsilon }\right)
-a_{i}^{-}\left( \psi _{\varepsilon }\right) =q\left( \psi _{\varepsilon
}+\alpha _{i}\left( \omega \right) ,\omega \right) \geq c_{1}>0$ and $%
\left\vert \left( a_{i}^{\pm }\left( \psi _{\varepsilon }\right) \right)
^{\prime }\right\vert \leq c_{3}$, thanks to (\ref{equ5}). One deduces that,
for every $k\in 
%TCIMACRO{\U{2115} }%
%BeginExpansion
\mathbb{N}
%EndExpansion
^{\ast }$ and every $\zeta _{1}$, $\zeta _{2}\in \left[ -k,k\right] $, the
functions%
\begin{equation*}
\begin{array}{lll}
\left( \zeta _{1},\psi _{\varepsilon }\right) & \longmapsto & \left(
a_{i}^{-}\left( \psi _{\varepsilon }\right) \right) ^{\prime }\dfrac{\zeta
_{1}-a_{i}^{+}\left( \psi _{\varepsilon }\right) }{a_{i}^{+}\left( \psi
_{\varepsilon }\right) -a_{i}^{-}\left( \psi _{\varepsilon }\right) }-\left(
a_{i}^{+}\left( \psi _{\varepsilon }\right) \right) ^{\prime }\dfrac{\zeta
_{1}-a_{i}^{-}\left( \psi _{\varepsilon }\right) }{a_{i}^{+}\left( \psi
_{\varepsilon }\right) -a_{i}^{-}\left( \psi _{\varepsilon }\right) }, \\ 
\left( \zeta _{2},\psi _{\varepsilon }\right) & \longmapsto & \left(
a_{j}^{-}\left( \psi _{\varepsilon }\right) \right) ^{\prime }\dfrac{\zeta
_{2}-a_{j}^{+}\left( \psi _{\varepsilon }\right) }{a_{j}^{+}\left( \psi
_{\varepsilon }\right) -a_{j}^{-}\left( \psi _{\varepsilon }\right) }-\left(
a_{j}^{+}\left( \psi _{\varepsilon }\right) \right) ^{\prime }\dfrac{\zeta
_{2}-a_{j}^{-}\left( \psi _{\varepsilon }\right) }{a_{j}^{+}\left( \psi
_{\varepsilon }\right) -a_{j}^{-}\left( \psi _{\varepsilon }\right) },%
\end{array}%
\end{equation*}%
are bounded and Lipschitz continuous with respect to $\left( \zeta _{1},\psi
_{\varepsilon }\right) $ and $\left( \zeta _{2},\psi _{\varepsilon }\right) $
respectively. The Cauchy problem (\ref{coor-loc2}) thus has a unique
solution $\psi _{\varepsilon }$. The condition $\psi _{\varepsilon }\left(
0,0,\tau \right) =\tau $ implies $\psi _{\varepsilon }\left( \zeta
_{1},\zeta _{2},t\right) =\tau +\psi _{\varepsilon }^{1}\left( \zeta
_{1},\tau \right) +\psi _{\varepsilon }^{2}\left( \zeta _{2},\tau \right) $,
with $\psi _{\varepsilon }^{1}\left( 0,\tau \right) =\psi _{\varepsilon
}^{2}\left( 0,\tau \right) =0$.

2. Thanks to the preceding point and to the hypothesis (\ref{equ5}), the
quantities $\psi _{\varepsilon }^{1}$, $\psi _{\varepsilon }^{2}$, $\frac{%
\partial ^{2}\psi _{\varepsilon }}{\partial ^{2}\tau }$, $\frac{\partial
\psi _{\varepsilon }}{\partial \zeta _{\alpha }}$ and $\frac{\partial
^{2}\psi _{\varepsilon }}{\partial \zeta _{\alpha }\partial \tau }$ are
equal to some $O\left( \varepsilon ^{2\left( 1-\theta \right) }\right) $, $%
\alpha =1,2$.
\end{proof}

\begin{lemma}
\label{lemma2}The system (\ref{coor-loc1}) has a unique solution $\left( 
\bar{\xi}_{1}\left( y_{1,}y_{2},t\right) ,\bar{\xi}_{2}\left(
y_{1,}y_{2},t\right) ,\bar{z}\left( y_{1},y_{2},t\right) \right) $.
\end{lemma}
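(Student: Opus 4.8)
The plan is to eliminate $z$ from (\ref{coor-loc1}) and recognise the two remaining equations as a contraction fixed-point problem in a rescaled variable, so that Lemma \ref{lemma1} and the Banach fixed-point theorem conclude. From $\Phi_{3,\varepsilon}=0$ one gets $z=\varepsilon^{\theta}\psi_{\varepsilon}(\xi_{1}/\varepsilon,\xi_{2}/\varepsilon,\tau)$, with $\tau=\varepsilon^{-\theta}t$ and $\psi_{\varepsilon}$ the function furnished by Lemma \ref{lemma1}; hence $\varepsilon^{-\theta}z=\psi_{\varepsilon}(\xi_{1}/\varepsilon,\xi_{2}/\varepsilon,\tau)$. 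Substituting this into $\Phi_{1,\varepsilon}=0$ and $\Phi_{2,\varepsilon}=0$, dividing by $\varepsilon$, and setting $\zeta_{\alpha}=\xi_{\alpha}/\varepsilon$ and $\eta_{\alpha}=y_{\alpha}/\varepsilon\in(-1/2,1/2)$, the system (\ref{coor-loc1}) is seen to be equivalent to the fixed-point equation $\zeta=G^{\varepsilon}(\zeta)$ in $\mathbb{R}^{2}$, where
\[
G_{\alpha}^{\varepsilon}(\zeta_{1},\zeta_{2})=a^{+}\bigl(\psi_{\varepsilon}(\zeta_{1},\zeta_{2},\tau)\bigr)\,(1/2+\eta_{\alpha})+a^{-}\bigl(\psi_{\varepsilon}(\zeta_{1},\zeta_{2},\tau)\bigr)\,(1/2-\eta_{\alpha}),\qquad\alpha=1,2,
\]
with $a^{\pm}=a_{i}^{\pm}$ for $\alpha=1$ and $a^{\pm}=a_{j}^{\pm}$ for $\alpha=2$; a solution $(\zeta_{1},\zeta_{2})$ then yields $\bar{\xi}_{\alpha}=\varepsilon\zeta_{\alpha}$ and $\bar{z}=\varepsilon^{\theta}\psi_{\varepsilon}(\zeta_{1},\zeta_{2},\tau)$, and conversely every solution of (\ref{coor-loc1}) arises in this way.

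I would then show that $G^{\varepsilon}$ contracts a fixed cube. By (\ref{equ5}) one has $|a_{i}^{\pm}|,|a_{j}^{\pm}|\le 1+c_{2}/2<3/2$, and since $1/2+\eta_{\alpha}$ and $1/2-\eta_{\alpha}$ are positive with sum $1$, it follows that $\|G^{\varepsilon}(\zeta)\|_{\infty}<3/2$ for every $\zeta$; in particular $G^{\varepsilon}$ maps $\overline{B}=\{\|\zeta\|_{\infty}\le 2\}$ into itself. On $\overline{B}$, Lemma \ref{lemma1}(2) with $k=2$ gives $\partial\psi_{\varepsilon}/\partial\zeta_{\alpha}=O(\varepsilon^{2(1-\theta)})$ uniformly, while the derivatives of $a_{i}^{\pm}$ and $a_{j}^{\pm}$ are bounded by a constant depending only on $c_{3}$ (again by (\ref{equ5})); the chain rule then gives $\|DG^{\varepsilon}\|=O(\varepsilon^{2(1-\theta)})$ uniformly on $\overline{B}$. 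Since $\theta<1$, the exponent $2(1-\theta)$ is positive, so for $\varepsilon$ small enough $G^{\varepsilon}$ is a contraction of $\overline{B}$, and the Banach fixed-point theorem supplies a unique $(\zeta_{1},\zeta_{2})\in\overline{B}$, hence a unique triple $(\bar{\xi}_{1},\bar{\xi}_{2},\bar{z})$ solving (\ref{coor-loc1}); as the self-mapping property and the contraction constant are uniform in $(y_{1},y_{2},t)$, this solution is a genuine function of these parameters, as in the statement.

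I expect the only substantial work to be the two uniform bounds just used. The self-mapping property rests on the a priori bounds (\ref{equ5}) on $r$ and $q$, which keep $|a_{i}^{\pm}|,|a_{j}^{\pm}|$, and hence $\|G^{\varepsilon}\|_{\infty}$, strictly inside $\overline{B}$; the smallness of $DG^{\varepsilon}$ combines the derivative bounds of (\ref{equ5}) with the estimate $\partial\psi_{\varepsilon}/\partial\zeta_{\alpha}=O(\varepsilon^{2(1-\theta)})$ of Lemma \ref{lemma1}, and one must make sure the relevant $\zeta$'s stay in $[-2,2]^{2}$, where that estimate is available --- which is why the self-mapping step precedes the contraction step. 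An equivalent but slightly heavier route would be to compute $D_{(\xi_{1},\xi_{2},z)}(\Phi_{1,\varepsilon},\Phi_{2,\varepsilon},\Phi_{3,\varepsilon})=I_{3}+O(\varepsilon^{1-\theta})$ directly and invoke a global inverse-function theorem, but the reduction above keeps everything within the framework of Lemma \ref{lemma1}.
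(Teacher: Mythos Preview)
Your argument is correct, but it takes a different route from the paper. The paper computes the Jacobian determinant $\overline{\Delta}$ of $\partial(\Phi_{1,\varepsilon},\Phi_{2,\varepsilon},\Phi_{3,\varepsilon})/\partial(\xi_{1},\xi_{2},z)$ directly, shows that $\overline{\Delta}=1+O(\varepsilon^{2(1-\theta)})$ via Lemma~\ref{lemma1}, and then invokes the implicit function theorem. You instead solve $\Phi_{3,\varepsilon}=0$ for $z$, substitute into the two remaining equations, and reduce to a two-dimensional fixed-point problem that you solve with the Banach contraction principle. Your route has the advantage of yielding global uniqueness cleanly: the a~priori bound $|a^{\pm}|<3/2$ forces any fixed point into $\overline{B}$, and the contraction on $\overline{B}$ then gives a unique fixed point overall. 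The paper's route, on the other hand, is exactly what is needed for the sequel: the implicit-function-theorem framework immediately produces the partial-derivative formulas (\ref{coor-loc3})--(\ref{coor-loc4}) and hence the metric tensor in Lemma~\ref{lemme3}, whereas your reduction would require an extra differentiation step to recover those. Your closing remark about the ``heavier route'' via $D_{(\xi_{1},\xi_{2},z)}\Phi=I_{3}+O(\varepsilon^{1-\theta})$ is essentially the paper's approach (implicit rather than inverse function theorem, but with the same Jacobian computation).
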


\begin{proof}
The Jacobian $\overline{\Delta }$ of the matrix $\frac{\partial \left( \Phi
_{1,\varepsilon },\Phi _{2,\varepsilon },\Phi _{3,\varepsilon }\right) }{%
\partial \left( \xi _{1},\xi _{2},z\right) }$ can be computed as%
\begin{equation*}
\begin{array}{l}
\overline{\Delta }=1+\dfrac{\partial \psi _{\varepsilon }}{\partial \zeta
_{1}}\left( \left( a_{i}^{-}\right) ^{\prime }\dfrac{\zeta _{1}-a_{i}^{+}}{%
a_{i}^{+}-a_{i}^{-}}-\left( a_{i}^{+}\right) ^{\prime }\dfrac{\zeta
_{1}-a_{i}^{-}}{a_{i}^{+}-a_{i}^{-}}\right) \\ 
\quad +\dfrac{\partial \psi _{\varepsilon }}{\partial \zeta _{2}}\left(
\left( a_{j}^{-}\right) ^{\prime }\dfrac{\zeta _{2}-a_{j}^{+}}{%
a_{j}^{+}-a_{j}^{-}}-\left( a_{j}^{+}\right) ^{\prime }\dfrac{\zeta
_{2}-a_{j}^{-}}{a_{j}^{+}-a_{j}^{-}}\right) .%
\end{array}%
\end{equation*}

Thanks to Lemma \ref{lemma1}, one has $\overline{\Delta }=1+O\left(
\varepsilon ^{2\left( 1-\theta \right) }\right) $, when $\varepsilon $ is
close to $0$. One deduces, using the implicit function theorem, that the
system (\ref{coor-loc1}) has a unique solution $\left( \bar{\xi}_{1},\bar{\xi%
}_{2},\bar{z}\right) $.
\end{proof}

Thanks to the implicit function theorem, one has%
\begin{equation}
\left\{ 
\begin{array}{lll}
\dfrac{\partial \bar{\xi}_{1}}{\partial y_{1}} & = & \dfrac{1}{\overline{%
\Delta }}\left( a_{i}^{+}-a_{i}^{-}\right) \left( 1-\dfrac{\partial \psi
_{\varepsilon }}{\partial \zeta _{2}}\widetilde{\Phi }_{\varepsilon ,j}+%
\dfrac{\partial \psi _{\varepsilon }}{\partial \zeta _{1}}\widetilde{\Phi }%
_{\varepsilon ,i}\right) , \\ 
\dfrac{\partial \bar{\xi}_{1}}{\partial y_{2}} & = & \dfrac{2}{\overline{%
\Delta }}\left( a_{j}^{+}-a_{j}^{-}\right) \dfrac{\partial \psi
_{\varepsilon }}{\partial \zeta _{2}}\widetilde{\Phi }_{\varepsilon ,i}, \\ 
\dfrac{\partial \bar{\xi}_{1}}{\partial t} & = & \dfrac{2}{\overline{\Delta }%
}\varepsilon ^{\left( 1-\theta \right) }\dfrac{\partial \psi _{\varepsilon }%
}{\partial \tau }\widetilde{\Phi }_{\varepsilon ,i},%
\end{array}%
\right.  \label{coor-loc3}
\end{equation}%
where%
\begin{equation*}
\begin{array}{lll}
\widetilde{\Phi }_{\varepsilon ,i} & = & \left( a_{i}^{-}\right) ^{\prime }%
\dfrac{\zeta _{1}-a_{i}^{+}}{a_{i}^{+}-a_{i}^{-}}-\left( a_{i}^{+}\right)
^{\prime }\dfrac{\zeta _{1}-a_{i}^{-}}{a_{i}^{+}-a_{i}^{-}}, \\ 
\widetilde{\Phi }_{\varepsilon ,j} & = & \left( a_{j}^{-}\right) ^{\prime }%
\dfrac{\zeta _{2}-a_{j}^{+}}{a_{j}^{+}-a_{j}^{-}}-\left( a_{j}^{+}\right)
^{\prime }\dfrac{\zeta _{2}-a_{j}^{-}}{a_{j}^{+}-a_{j}^{-}}.%
\end{array}%
\end{equation*}

One also has%
\begin{equation}
\left\{ 
\begin{array}{lllrll}
\dfrac{\partial \bar{\xi}_{2}}{\partial y_{1}} & = & \dfrac{2\left(
a_{i}^{+}-a_{i}^{-}\right) }{\overline{\Delta }}\dfrac{\partial \psi
_{\varepsilon }}{\partial \zeta _{2}}\widetilde{\Phi }_{\varepsilon ,j}, & 
\dfrac{\partial \bar{z}}{\partial y_{1}} & = & \dfrac{2}{\overline{\Delta }}%
\varepsilon ^{\theta -1}\dfrac{\partial \psi _{\varepsilon }}{\partial \zeta
_{1}}\left( a_{i}^{+}-a_{i}^{-}\right) , \\ 
\dfrac{\partial \bar{\xi}_{2}}{\partial y_{2}} & = & \dfrac{%
a_{j}^{+}-a_{j}^{-}}{\overline{\Delta }}\left( 
\begin{array}{l}
1-\dfrac{\partial \psi _{\varepsilon }}{\partial \zeta _{1}}\widetilde{\Phi }%
_{\varepsilon ,i} \\ 
\quad +\dfrac{\partial \psi _{\varepsilon }}{\partial \zeta _{2}}\widetilde{%
\Phi }_{\varepsilon ,j}%
\end{array}%
\right) , & \dfrac{\partial \bar{z}}{\partial y_{2}} & = & \dfrac{2}{%
\overline{\Delta }}\varepsilon ^{\theta -1}\dfrac{\partial \psi
_{\varepsilon }}{\partial \zeta _{2}}\left( a_{j}^{+}-a_{j}^{-}\right) , \\ 
\dfrac{\partial \bar{\xi}_{2}}{\partial t} & = & \dfrac{2}{\overline{\Delta }%
}\varepsilon ^{1-\theta }\dfrac{\partial \psi _{\varepsilon }}{\partial \tau 
}\widetilde{\Phi }_{\varepsilon ,j}, & \dfrac{\partial \bar{z}}{\partial t}
& = & \dfrac{1}{\overline{\Delta }}\dfrac{\partial \psi _{\varepsilon }}{%
\partial \tau }\left( 
\begin{array}{l}
1+\dfrac{\partial \psi _{\varepsilon }}{\partial \zeta _{1}}\widetilde{\Phi }%
_{\varepsilon ,i} \\ 
\quad +\dfrac{\partial \psi _{\varepsilon }}{\partial \zeta _{2}}\widetilde{%
\Phi }_{\varepsilon ,j}%
\end{array}%
\right) .%
\end{array}%
\right.  \label{coor-loc4}
\end{equation}

Let $\left( g_{\varepsilon ,\alpha \beta }\right) _{\alpha ,\beta =1,2,3}$
be the metric tensor associated to the local basis defined through the
vectors (\ref{coor-loc3}) and (\ref{coor-loc4}). One has the following
result.

\begin{lemma}
\label{lemme3}

\begin{enumerate}
\item The metric tensor $\left( g_{\varepsilon ,\alpha \beta }\right)
_{\alpha ,\beta =1,2,3}$ satisfies a symmetry property and the following
behaviour%
\begin{equation*}
\begin{array}{rllrll}
g_{\varepsilon ,11} & = & \left( q_{i}\right) ^{2}\left( \varepsilon
^{-\theta }t\right) +O\left( \varepsilon ^{2\left( 1-\theta \right) }\right)
, & g_{\varepsilon ,22} & = & \left( q_{j}\right) ^{2}\left( \varepsilon
^{-\theta }t\right) +O\left( \varepsilon ^{2\left( 1-\theta \right) }\right)
, \\ 
g_{\varepsilon ,12} & = & O\left( \varepsilon ^{2\left( 1-\theta \right)
}\right) , & g_{\varepsilon ,23} & = & O\left( \varepsilon ^{2\left(
1-\theta \right) }\right) , \\ 
g_{\varepsilon ,13} & = & O\left( \varepsilon ^{2\left( 1-\theta \right)
}\right) , & g_{\varepsilon ,33} & = & 1+O\left( \varepsilon ^{2\left(
1-\theta \right) }\right) ,%
\end{array}%
\end{equation*}%
where $q_{i}\left( \varepsilon ^{-\theta }t\right) =q\left( \varepsilon
^{-\theta }t+\alpha _{i}\left( \omega \right) ,\omega \right) $, $\forall
i\in 
%TCIMACRO{\U{2124} }%
%BeginExpansion
\mathbb{Z}
%EndExpansion
$.

\item The contravariant components $\left( g_{\varepsilon }^{\alpha \beta
}\right) _{\alpha ,\beta =1,2,3}$ of $\left( g_{\varepsilon ,\alpha \beta
}\right) _{\alpha ,\beta =1,2,3}$ satisfy a symmetry property and%
\begin{equation*}
\begin{array}{rllrll}
g_{\varepsilon }^{11} & = & \dfrac{1}{\left( q_{i}\right) ^{2}\left(
\varepsilon ^{-\theta }t\right) }+O\left( \varepsilon ^{2\left( 1-\theta
\right) }\right) , & g_{\varepsilon }^{23} & = & O\left( \varepsilon
^{2\left( 1-\theta \right) }\right) , \\ 
g_{\varepsilon }^{12} & = & O\left( \varepsilon ^{2\left( 1-\theta \right)
}\right) , & g_{\varepsilon }^{13} & = & O\left( \varepsilon ^{2\left(
1-\theta \right) }\right) , \\ 
g_{\varepsilon }^{22} & = & \dfrac{1}{\left( q_{j}\right) ^{2}\left(
\varepsilon ^{-\theta }t\right) }+O\left( \varepsilon ^{2\left( 1-\theta
\right) }\right) , & g_{\varepsilon }^{33} & = & 1+O\left( \varepsilon
^{2\left( 1-\theta \right) }\right) .%
\end{array}%
\end{equation*}
\end{enumerate}
\end{lemma}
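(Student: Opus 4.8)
The plan is to recognize $(g_{\varepsilon,\alpha\beta})_{\alpha,\beta=1,2,3}$ as the Gram matrix of the three columns of the Jacobian $\partial(\bar\xi_1,\bar\xi_2,\bar z)/\partial(y_1,y_2,t)$ whose nine entries are listed in (\ref{coor-loc3})--(\ref{coor-loc4}), taken for the Euclidean inner product in the $(\bar\xi_1,\bar\xi_2,\bar z)$ variables (these differ from $(x_1,x_2,x_3)$ only by the translation $(i\varepsilon,j\varepsilon,0)$ and the reflection $x_3=-z$, so the metric is unchanged). Setting $y_3=t$, one has $g_{\varepsilon,\alpha\beta}=\frac{\partial\bar\xi_1}{\partial y_\alpha}\frac{\partial\bar\xi_1}{\partial y_\beta}+\frac{\partial\bar\xi_2}{\partial y_\alpha}\frac{\partial\bar\xi_2}{\partial y_\beta}+\frac{\partial\bar z}{\partial y_\alpha}\frac{\partial\bar z}{\partial y_\beta}$, so the symmetry in part~1 is automatic; everything else comes from substituting into these nine sums the estimates of Lemma~\ref{lemma1} ($\partial\psi_\varepsilon/\partial\zeta_\alpha,\ \partial^2\psi_\varepsilon/\partial\tau^2,\ \partial^2\psi_\varepsilon/(\partial\zeta_\alpha\partial\tau)=O(\varepsilon^{2(1-\theta)})$, $\partial\psi_\varepsilon/\partial\tau=1+O(\varepsilon^{2(1-\theta)})$, $\psi_\varepsilon=\tau+O(\varepsilon^{2(1-\theta)})$), of Lemma~\ref{lemma2} ($\overline{\Delta}=1+O(\varepsilon^{2(1-\theta)})$), and the bounds (\ref{equ5})--(\ref{random}), which give $c_1\le a_i^{+}-a_i^{-}=q_i(\psi_\varepsilon)\le c_2<1$, $|(a_i^{\pm})'|\le c_3$ and, since $\zeta_1,\zeta_2$ are bounded by (\ref{equ5}), $|\widetilde{\Phi}_{\varepsilon,i}|+|\widetilde{\Phi}_{\varepsilon,j}|\le C$.

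First I would handle the three diagonal entries by isolating the leading term of each column. From (\ref{coor-loc3})--(\ref{coor-loc4}), $\partial\bar\xi_1/\partial y_1=(a_i^{+}-a_i^{-})(1+O(\varepsilon^{2(1-\theta)}))$, $\partial\bar\xi_2/\partial y_1=O(\varepsilon^{2(1-\theta)})$ and $\partial\bar z/\partial y_1=O(\varepsilon^{1-\theta})$, so $g_{\varepsilon,11}=(a_i^{+}-a_i^{-})^2+O(\varepsilon^{2(1-\theta)})$; and since $a_i^{+}-a_i^{-}=q(\psi_\varepsilon+\alpha_i(\omega),\omega)$ with $q$ Lipschitz and $\psi_\varepsilon=\tau+O(\varepsilon^{2(1-\theta)})$, this is $(q_i)^2(\varepsilon^{-\theta}t)+O(\varepsilon^{2(1-\theta)})$. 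The entry $g_{\varepsilon,22}$ is the same with $j$ in place of $i$, and $g_{\varepsilon,33}=(\partial\bar z/\partial t)^2+(\partial\bar\xi_1/\partial t)^2+(\partial\bar\xi_2/\partial t)^2=(1+O(\varepsilon^{2(1-\theta)}))^2+O(\varepsilon^{2(1-\theta)})=1+O(\varepsilon^{2(1-\theta)})$. The off-diagonal entry $g_{\varepsilon,12}$ is just as easy: each of its three summands carries a factor $\partial\psi_\varepsilon/\partial\zeta_\alpha$, or else the product $(\partial\bar z/\partial y_1)(\partial\bar z/\partial y_2)$, all of which are $O(\varepsilon^{2(1-\theta)})$.

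The hard part will be $g_{\varepsilon,13}$ and $g_{\varepsilon,23}$, where a crude count of orders gives only $O(\varepsilon^{1-\theta})$: indeed $\partial\bar\xi_1/\partial y_1$ and $\partial\bar z/\partial t$ are $O(1)$ while $\partial\bar\xi_1/\partial t$ and $\partial\bar z/\partial y_1$ are each $O(\varepsilon^{1-\theta})$, so $(\partial\bar\xi_1/\partial y_1)(\partial\bar\xi_1/\partial t)$ and $(\partial\bar z/\partial y_1)(\partial\bar z/\partial t)$ are individually of size $\varepsilon^{1-\theta}$. I would show they cancel to leading order --- this is exactly the infinitesimal orthogonality that the coordinate construction, i.e. the Cauchy system (\ref{coor-loc2}), was designed to produce. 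Differentiating the defining relations (\ref{coor-loc1}) with respect to $t$ and to $y_1$, and using $\tfrac{\varepsilon}{2}+y_1=\varepsilon(\zeta_1-a_i^{-})/(a_i^{+}-a_i^{-})$, $\tfrac{\varepsilon}{2}-y_1=\varepsilon(a_i^{+}-\zeta_1)/(a_i^{+}-a_i^{-})$ together with $\partial\psi_\varepsilon/\partial\zeta_1=\varepsilon^{2(1-\theta)}\widetilde{\Phi}_{\varepsilon,i}$, one obtains $\partial\bar\xi_1/\partial t=-\varepsilon^{1-\theta}\widetilde{\Phi}_{\varepsilon,i}\,(\partial\bar z/\partial t)$ and $\partial\bar z/\partial y_1=\varepsilon^{1-\theta}\widetilde{\Phi}_{\varepsilon,i}(a_i^{+}-a_i^{-})(1+O(\varepsilon^{2(1-\theta)}))$, together with $\partial\bar\xi_1/\partial y_1=(a_i^{+}-a_i^{-})(1+O(\varepsilon^{2(1-\theta)}))$; hence the first and third summands of $g_{\varepsilon,13}$ equal $\mp\,\varepsilon^{1-\theta}\widetilde{\Phi}_{\varepsilon,i}(a_i^{+}-a_i^{-})(\partial\bar z/\partial t)(1+O(\varepsilon^{2(1-\theta)}))$, whose sum is $O(\varepsilon^{1-\theta})\cdot O(\varepsilon^{2(1-\theta)})$, while the middle summand $(\partial\bar\xi_2/\partial y_1)(\partial\bar\xi_2/\partial t)$ is $O(\varepsilon^{2(1-\theta)})\cdot O(\varepsilon^{1-\theta})$. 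Thus $g_{\varepsilon,13}=O(\varepsilon^{2(1-\theta)})$, and the same computation with $j,y_2,\widetilde{\Phi}_{\varepsilon,j}$ gives $g_{\varepsilon,23}=O(\varepsilon^{2(1-\theta)})$.

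Finally, part~2 is a routine perturbation of a matrix inverse. Write $g_\varepsilon=G_0+O(\varepsilon^{2(1-\theta)})$ entrywise, with $G_0=\mathrm{diag}\big((q_i)^2(\varepsilon^{-\theta}t),(q_j)^2(\varepsilon^{-\theta}t),1\big)$; since $\det G_0=(q_iq_j)^2(\varepsilon^{-\theta}t)\ge c_1^{4}>0$, the determinant $\det g_\varepsilon$ stays bounded away from $0$ uniformly for $\varepsilon$ small, so $g_\varepsilon$ is invertible and Cramer's rule writes each $g_\varepsilon^{\alpha\beta}$ as a cofactor of $g_\varepsilon$ --- a polynomial in its entries, hence of the form (value at $G_0$) $+\,O(\varepsilon^{2(1-\theta)})$ --- divided by $\det g_\varepsilon$. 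This yields $g_\varepsilon^{11}=1/(q_i)^2(\varepsilon^{-\theta}t)+O(\varepsilon^{2(1-\theta)})$, $g_\varepsilon^{22}=1/(q_j)^2(\varepsilon^{-\theta}t)+O(\varepsilon^{2(1-\theta)})$, $g_\varepsilon^{33}=1+O(\varepsilon^{2(1-\theta)})$ and $g_\varepsilon^{12},g_\varepsilon^{13},g_\varepsilon^{23}=O(\varepsilon^{2(1-\theta)})$; the symmetry of $(g_\varepsilon^{\alpha\beta})$ is inherited from that of $(g_{\varepsilon,\alpha\beta})$ because the inverse of a symmetric invertible matrix is symmetric.
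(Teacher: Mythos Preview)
Your proof is correct and follows essentially the same approach as the paper---compute the Gram matrix from (\ref{coor-loc3})--(\ref{coor-loc4}) with the estimates of Lemmas~\ref{lemma1}--\ref{lemma2}, then invert via Cramer. The paper's own proof is terser (it merely records $|q_i(\varepsilon^{-\theta}z)-q_i(\varepsilon^{-\theta}t)|=O(\varepsilon^{2(1-\theta)})$, the analogous estimate for the $t$-derivative, and the determinant asymptotics, then declares the rest a direct consequence of (\ref{coor-loc3})--(\ref{coor-loc4})); your explicit treatment of the leading-order cancellation in $g_{\varepsilon,13}$ and $g_{\varepsilon,23}$---tracing it back to the orthogonality relations encoded in (\ref{coor-loc2})---is a genuine clarification of a point the paper leaves implicit.
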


\begin{proof}
Observing that%
\begin{equation*}
\begin{array}{rll}
\left\vert \dfrac{\partial }{\partial t}q_{i}\left( \varepsilon ^{-\theta
}z\right) -\dfrac{\partial }{\partial t}q_{i}\left( \varepsilon ^{-\theta
}t\right) \right\vert & = & O\left( \varepsilon ^{2-3\theta }\right) , \\ 
\left\vert q_{i}\left( \varepsilon ^{-\theta }z\right) -q_{i}\left(
\varepsilon ^{-\theta }t\right) \right\vert & = & O\left( \varepsilon
^{2\left( 1-\theta \right) }\right) , \\ 
\det \left( g_{\varepsilon ,\alpha \beta }\right) & = & \left( q_{i}\right)
^{2}\left( \varepsilon ^{-\theta }t\right) \left( q_{j}\right) ^{2}\left(
\varepsilon ^{-\theta }t\right) +O\left( \varepsilon ^{2\left( 1-\theta
\right) }\right) ,%
\end{array}%
\end{equation*}%
these formulas are direct consequences of (\ref{coor-loc3})-(\ref{coor-loc4}%
).
\end{proof}

One deduces from the preceding results that the gradient of a function $u$
expressed in the local coordinates $\left( y_{1},y_{2},t\right) $ inside the
fissure $Y_{\varepsilon ,ij}\left( \omega \right) $ is of the form%
\begin{equation*}
\nabla u=\left( Id+O\left( \varepsilon ^{2\left( 1-\theta \right) }\right)
\right) \left( \dfrac{1}{q_{i}}\dfrac{\partial u}{\partial y_{1}},\dfrac{1}{%
q_{j}}\dfrac{\partial u}{\partial y_{2}},\dfrac{\partial u}{\partial t}%
\right) ,
\end{equation*}%
for some non-diagonal matrix $O\left( \varepsilon ^{2\left( 1-\theta \right)
}\right) $.

Hence, using the formula $\Delta u=\frac{1}{\sqrt{\left\vert g\right\vert }}%
\partial _{\alpha }\left( \sqrt{\left\vert g\right\vert }g^{\alpha \beta
}\partial _{\beta }u\right) $, for $\alpha ,\beta =1,2,3$, with $\left\vert
g\right\vert =\left\vert \det \left( g_{\alpha \beta }\right) \right\vert $,
one has%
\begin{equation*}
\Delta u=\left( \frac{1}{\left( q_{i}\right) ^{2}}\frac{\partial ^{2}u}{%
\partial \left( y_{1}\right) ^{2}}+\frac{1}{\left( q_{j}\right) ^{2}}\frac{%
\partial ^{2}u}{\partial \left( y_{2}\right) ^{2}}+\frac{1}{q_{i}q_{j}}\frac{%
\partial }{\partial t}\left( q_{i}q_{j}\frac{\partial u}{\partial t}\right)
\right) \left( 1+O\left( \varepsilon ^{2\left( 1-\theta \right) }\right)
\right) .
\end{equation*}

\section{Study of the fluid flow}

\subsection{Existence of a weak solution and a priori estimates}

We define the functional space%
\begin{equation*}
\mathbf{V}_{\varepsilon }=\left\{ 
\begin{array}{l}
v\in \mathbf{L}^{2}\left( \Omega _{f}^{\varepsilon }\left( \omega \right) ;%
%TCIMACRO{\U{211d} }%
%BeginExpansion
\mathbb{R}
%EndExpansion
^{3}\right) \mid \func{div}\left( v\right) =0\text{ in }\Omega
_{f}^{\varepsilon }\left( \omega \right) \text{, }v\mid _{Y_{\varepsilon
}\left( \omega \right) }\in \mathbf{H}^{1}\left( Y_{\varepsilon }\left(
\omega \right) ;%
%TCIMACRO{\U{211d} }%
%BeginExpansion
\mathbb{R}
%EndExpansion
^{3}\right) \text{,} \\ 
\quad v=0\text{ on }\Lambda _{\varepsilon }\left( \omega \right) \text{, }%
v\cdot n=0\text{ on }\partial \Omega _{f}^{+,\varepsilon }\cup \partial
\Omega _{h,f}^{-,\varepsilon }\cup \Gamma%
\end{array}%
\right\} .
\end{equation*}

$\mathbf{V}_{\varepsilon }$ is a complete space when endowed with the norm%
\begin{equation*}
\left\Vert v\right\Vert _{\mathbf{V}_{\varepsilon }}=\left( \left\Vert
v\right\Vert _{\mathbf{L}^{2}\left( \Omega _{f}^{+,\varepsilon };%
%TCIMACRO{\U{211d} }%
%BeginExpansion
\mathbb{R}
%EndExpansion
^{3}\right) }^{2}+\left\Vert v\right\Vert _{\mathbf{L}^{2}\left( \Omega
_{h,f}^{-,\varepsilon };%
%TCIMACRO{\U{211d} }%
%BeginExpansion
\mathbb{R}
%EndExpansion
^{3}\right) }^{2}+\left\Vert \nabla v\right\Vert _{\mathbf{L}^{2}\left(
Y_{\varepsilon }\left( \omega \right) ;%
%TCIMACRO{\U{211d} }%
%BeginExpansion
\mathbb{R}
%EndExpansion
^{9}\right) }^{2}\right) ^{1/2}.
\end{equation*}

Multiplying (\ref{equ10}) by $\Phi \in \mathbf{V}_{\varepsilon }$, using
Green's formula and the conditions (\ref{equ11}), we obtain the following
variational formulation%
\begin{equation}
\begin{array}{l}
\mu ^{+}\dint\nolimits_{\Omega _{f}^{+,\varepsilon }}\left( K_{\varepsilon
}^{+}\right) ^{-1}v_{\varepsilon ,d}\cdot \Phi dx+\mu
^{-}\dint\nolimits_{\Omega _{h,f}^{-,\varepsilon }}\left( K_{\varepsilon
}^{-}\right) ^{-1}v_{\varepsilon ,d}\cdot \Phi dx+\mu \varepsilon
^{2}\dint\nolimits_{Y_{\varepsilon }\left( \omega \right) }\nabla
v_{\varepsilon ,s}\cdot \nabla \Phi dx \\ 
\quad +\gamma \dint\nolimits_{\Gamma _{0,\varepsilon }^{+}}\left(
K_{\varepsilon }^{+}\right) ^{-1/2}\left( v_{\varepsilon ,s}\right) _{\tau
}\cdot \left( \Phi \right) _{\tau }dx^{\prime }+\gamma
\dint\nolimits_{\Gamma _{h,\varepsilon }^{-}}\left( K_{\varepsilon
}^{-}\right) ^{-1/2}\left( v_{\varepsilon ,s}\right) _{\tau }\cdot \left(
\Phi \right) _{\tau }dx^{\prime } \\ 
\quad =\dint\nolimits_{\Omega _{f}^{+,\varepsilon }}g^{+}\cdot \Phi
dx+\dint\nolimits_{\Omega _{h,f}^{-,\varepsilon }}g^{-}\cdot \Phi dx,%
\end{array}
\label{equ13}
\end{equation}%
where $x^{\prime }=\left( x_{1},x_{2}\right) $. Using standard arguments, we
immediately deduce from this variational formulation that the system (\ref%
{equ10})-(\ref{equ11}) has a unique weak solution $\left( v_{\varepsilon
},p_{\varepsilon }\right) \in \mathbf{V}_{\varepsilon }\times L^{2}\left(
\Omega _{\varepsilon }\left( \omega \right) \right) /\mathbb{R}$.

\begin{lemma}
\label{estim-vites}There exists a non-random constant $C$ independent of $%
\varepsilon $ such that%
\begin{equation*}
\begin{array}{rclrll}
\dint\nolimits_{Y_{\varepsilon }\left( \omega \right) }\left\vert
v_{\varepsilon }\right\vert ^{2}dx & \leq & C\varepsilon
^{2}\dint\nolimits_{Y_{\varepsilon }\left( \omega \right) }\left\vert \nabla
v_{\varepsilon }\right\vert ^{2}dx, & \dint\nolimits_{\Omega
_{f}^{\varepsilon }\left( \omega \right) }\left\vert v_{\varepsilon
}\right\vert ^{2}dx & \leq & C, \\ 
\dint\nolimits_{\Gamma _{0,\varepsilon }^{+}}\left\vert v_{\varepsilon
}\right\vert ^{2}dx^{\prime }+\dint\nolimits_{\Gamma _{h,\varepsilon
}^{-}}\left\vert v_{\varepsilon }\right\vert ^{2}dx^{\prime } & \leq & 
C\varepsilon ^{2}, & \varepsilon ^{2}\dint\nolimits_{Y_{\varepsilon }\left(
\omega \right) }\left\vert \nabla v_{\varepsilon }\right\vert ^{2}dx & \leq
& C.%
\end{array}%
\end{equation*}
\end{lemma}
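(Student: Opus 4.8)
The plan is to prove the four bounds by content rather than by their position in the array: the Poincar\'{e}-type inequality in the thin fissures, then the energy estimate $\varepsilon ^{2}\int_{Y_{\varepsilon }}\left\vert \nabla v_{\varepsilon }\right\vert ^{2}dx\leq C$, then the global $\mathbf{L}^{2}$-bound (which will follow from the first two), and finally the trace estimate on $\Gamma _{0,\varepsilon }^{+}\cup \Gamma _{h,\varepsilon }^{-}$, which I expect to be the only genuinely delicate step.

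\emph{Poincar\'{e} in the fissures.} I would work fissure by fissure in the curvilinear coordinates $\left( y_{1},y_{2},t\right) $ of Section 2. Since $v_{\varepsilon }=0$ on $\Lambda _{\varepsilon }\left( \omega \right) $, its restriction $v_{\varepsilon ,s}$ to each $Y_{\varepsilon ,ij}\left( \omega \right) $ vanishes on the lateral faces $\left\{ y_{1}=\pm \varepsilon /2\right\} $ and $\left\{ y_{2}=\pm \varepsilon /2\right\} $. The one-dimensional Poincar\'{e} inequality $\int_{-\varepsilon /2}^{\varepsilon /2}\left\vert v_{\varepsilon }\right\vert ^{2}dy_{1}\leq \left( \varepsilon ^{2}/\pi ^{2}\right) \int_{-\varepsilon /2}^{\varepsilon /2}\left\vert \partial v_{\varepsilon }/\partial y_{1}\right\vert ^{2}dy_{1}$, integrated in $\left( y_{2},t\right) $ against the volume element $\sqrt{\left\vert g_{\varepsilon }\right\vert }\,dy_{1}dy_{2}dt$, together with the facts that by Lemma \ref{lemme3} the density $\sqrt{\left\vert g_{\varepsilon }\right\vert }$ is bounded above and below and that $\left\vert \partial v_{\varepsilon }/\partial y_{1}\right\vert \leq C\left\vert \nabla v_{\varepsilon }\right\vert $ (by the expression for $\nabla v_{\varepsilon }$ in these coordinates given after Lemma \ref{lemme3} and the bounds $\left( \ref{equ5}\right) $ on $q$), gives $\int_{Y_{\varepsilon ,ij}}\left\vert v_{\varepsilon }\right\vert ^{2}dx\leq C\varepsilon ^{2}\int_{Y_{\varepsilon ,ij}}\left\vert \nabla v_{\varepsilon }\right\vert ^{2}dx$. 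Summing over $\left( i,j\right) \in I_{\varepsilon }\left( \omega \right) $ yields the Poincar\'{e} estimate.

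\emph{Energy estimate and global bound.} Taking $\Phi =v_{\varepsilon }$ in the variational formulation $\left( \ref{equ13}\right) $, the left-hand side becomes a sum of nonnegative terms: the two Darcy quadratic forms $\mu ^{\pm }\int \left( K_{\varepsilon }^{\pm }\right) ^{-1}v_{\varepsilon ,d}\cdot v_{\varepsilon ,d}\,dx$, the Stokes term $\mu \varepsilon ^{2}\int_{Y_{\varepsilon }}\left\vert \nabla v_{\varepsilon ,s}\right\vert ^{2}dx$, and the two Beavers--Joseph--Saffman surface terms $\gamma \int \left( K_{\varepsilon }^{\pm }\right) ^{-1/2}\left\vert \left( v_{\varepsilon ,s}\right) _{\tau }\right\vert ^{2}dx^{\prime }\geq 0$. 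Since $K^{\pm }$ are bounded, symmetric and positive definite, $\left( K_{\varepsilon }^{\pm }\right) ^{-1}$ is uniformly coercive, so the left-hand side dominates $c\left( \left\Vert v_{\varepsilon ,d}\right\Vert _{\mathbf{L}^{2}\left( \Omega _{f}^{+,\varepsilon }\right) }^{2}+\left\Vert v_{\varepsilon ,d}\right\Vert _{\mathbf{L}^{2}\left( \Omega _{h,f}^{-,\varepsilon }\right) }^{2}\right) +\mu \varepsilon ^{2}\left\Vert \nabla v_{\varepsilon ,s}\right\Vert _{\mathbf{L}^{2}\left( Y_{\varepsilon }\right) }^{2}$. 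Bounding the right-hand side $\int_{\Omega _{f}^{+,\varepsilon }}g^{+}\cdot v_{\varepsilon ,d}\,dx+\int_{\Omega _{h,f}^{-,\varepsilon }}g^{-}\cdot v_{\varepsilon ,d}\,dx$ by Cauchy--Schwarz and absorbing the $v_{\varepsilon ,d}$-factors by Young's inequality gives at once the energy estimate $\varepsilon ^{2}\int_{Y_{\varepsilon }}\left\vert \nabla v_{\varepsilon }\right\vert ^{2}dx\leq C$ and $\left\Vert v_{\varepsilon ,d}\right\Vert _{\mathbf{L}^{2}\left( \Omega _{f}^{+,\varepsilon }\right) }^{2}+\left\Vert v_{\varepsilon ,d}\right\Vert _{\mathbf{L}^{2}\left( \Omega _{h,f}^{-,\varepsilon }\right) }^{2}\leq C$. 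Combining the second bound with the Poincar\'{e} estimate, $\int_{Y_{\varepsilon }}\left\vert v_{\varepsilon }\right\vert ^{2}dx\leq C\varepsilon ^{2}\int_{Y_{\varepsilon }}\left\vert \nabla v_{\varepsilon }\right\vert ^{2}dx\leq C$, and adding the Darcy contribution, gives the global $\mathbf{L}^{2}$-bound.

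\emph{The trace estimate: the main obstacle.} This is where the real work lies. A soft trace inequality --- rescale each fissure to a reference one of unit transverse width, apply the ordinary trace theorem, and scale back --- gives only $\int_{\Gamma _{0,\varepsilon ,ij}^{+}}\left\vert v_{\varepsilon }\right\vert ^{2}dx^{\prime }\leq C\left( \varepsilon ^{2}\int_{Y_{\varepsilon ,ij}}\left\vert \nabla _{y^{\prime }}v_{\varepsilon }\right\vert ^{2}dx+\int_{Y_{\varepsilon ,ij}}\left\vert \partial v_{\varepsilon }/\partial t\right\vert ^{2}dx\right) $, which, after summation and the energy estimate, is only of order $1$, not $\varepsilon ^{2}$. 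Recovering the missing factor $\varepsilon ^{2}$ requires exploiting extra structure: that $v_{\varepsilon ,s}$ vanishes on the \emph{whole} lateral boundary, so that its traces on $\Gamma _{0,\varepsilon ,ij}^{+}$ and $\Gamma _{h,\varepsilon ,ij}^{-}$ lie in the Lions--Magenes class $\widetilde{H}^{1/2}$ and satisfy a fractional Poincar\'{e} inequality on a set of diameter $O\left( \varepsilon \right) $; the interface conditions $\left( \ref{equ11}\right) $ relating these traces to $v_{\varepsilon ,d}$ and to $\varepsilon ^{2}\partial v_{\varepsilon ,s}/\partial x_{3}$; and, crucially, the fact that in the slowly varying ($\theta <2/3$) channel geometry the vertical derivative $\partial v_{\varepsilon ,s}/\partial t$ is much smaller than the transverse ones, by virtue of the near-lubrication Stokes system $-\mu \varepsilon ^{2}\Delta v_{\varepsilon ,s}+\nabla p_{\varepsilon ,s}=0$, $\func{div}\left( v_{\varepsilon ,s}\right) =0$, and the expansions of $\nabla $ and $\Delta $ in the local coordinates recorded after Lemma \ref{lemme3}. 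The technical heart is therefore the refined estimate on $\partial v_{\varepsilon ,s}/\partial t$, morally $\int_{Y_{\varepsilon }}\left\vert \partial v_{\varepsilon ,s}/\partial t\right\vert ^{2}dx\leq C\varepsilon ^{2}$; granted this and the $\widetilde{H}^{1/2}$ Poincar\'{e}, summation over $\left( i,j\right) $ and the symmetric treatment of $\Gamma _{h,\varepsilon }^{-}$ yield the trace estimate and complete the lemma.
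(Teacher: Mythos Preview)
Your treatment of the Poincar\'{e} inequality in the fissures, the energy estimate, and the global $\mathbf{L}^{2}$-bound is essentially the paper's; the only cosmetic difference is that the paper works in the rescaled (``normalized'') fissure $Z_{\varepsilon ,ij}$ with $z_{\alpha }=(x_{\alpha }-\varepsilon i)/\varepsilon $ and $x_{3}$ unchanged, rather than in the curvilinear $(y_{1},y_{2},t)$-frame, but the computation is identical.

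The divergence is on the trace bound. The paper's entire argument there is one sentence: apply a trace theorem on the normalized fissure $Z_{\varepsilon ,ij}$ (together with the Poincar\'{e} inequality already proved, since $\psi $ vanishes on the lateral boundary) to get $\int_{\partial Z_{\varepsilon ,ij}}\psi ^{2}\leq C\int_{Z_{\varepsilon ,ij}}\lvert \nabla \psi \rvert ^{2}$, and rescale back. That is exactly the ``soft'' inequality you write down and correctly observe yields only $\int_{\Gamma _{0,\varepsilon }^{+}}\lvert v_{\varepsilon }\rvert ^{2}dx^{\prime }\leq C\int_{Y_{\varepsilon }}\lvert \nabla v_{\varepsilon }\rvert ^{2}dx$, hence $O(\varepsilon ^{-2})$ via the energy estimate, not $O(\varepsilon ^{2})$. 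The paper invokes none of the $\widetilde{H}^{1/2}$ fractional Poincar\'{e} or lubrication-type control of $\partial v_{\varepsilon ,s}/\partial t$ that you outline.

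In fact the printed bound $C\varepsilon ^{2}$ cannot hold: by Remark~\ref{Remark1}\,(4) one has $\int_{\Gamma _{0,\varepsilon }^{+}}(v_{\varepsilon })_{3}\varphi \,dx^{\prime }\rightarrow \langle q^{2}(0)\rangle \int_{\Sigma }(v_{0,f})_{3}\varphi \,dx^{\prime }$, in general nonzero, while $\lvert \Gamma _{0,\varepsilon }^{+}\rvert =O(1)$; Cauchy--Schwarz together with the claimed $\varepsilon ^{2}$ bound would force this limit to vanish. So the $\varepsilon ^{2}$ is almost certainly a misprint, and the soft trace inequality you already obtained is all the paper actually proves and all that is used downstream. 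Your elaborate program to recover the missing $\varepsilon ^{2}$ is chasing a typo; drop it and the proof is complete.
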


\begin{proof}
Define the normalized fissure%
\begin{equation}
Z_{\varepsilon ,ij}\left( \omega \right) =\left\{ 
\begin{array}{l}
\left( z_{1},z_{2},x_{3}\right) \in 
%TCIMACRO{\U{211d} }%
%BeginExpansion
\mathbb{R}
%EndExpansion
^{2}\mid a_{i}^{-}\left( -\varepsilon ^{-\theta }x_{3}\right)
<z_{1}<a_{i}^{+}\left( -\varepsilon ^{-\theta }x_{3}\right) , \\ 
\text{\quad }a_{j}^{-}\left( -\varepsilon ^{-\theta }x_{3}\right)
<z_{2}<a_{j}^{+}\left( -\varepsilon ^{-\theta }x_{3}\right) \text{, }%
x_{3}\in \left( -h,0\right)%
\end{array}%
\right\} .  \label{equ14}
\end{equation}

For every $\psi \in C^{1}\left( Z_{\varepsilon ,ij}\left( \omega \right)
\right) $ such that $\psi =0$ on the lateral boundary of $Z_{\varepsilon
,ij}\left( \omega \right) $, one has inside $Z_{\varepsilon ,ij}\left(
\omega \right) $%
\begin{equation*}
\left( \psi \left( s,z_{2},x_{3}\right) \right) ^{2}=\left(
\dint\nolimits_{a_{i}^{-}}^{s}\frac{\partial \psi }{\partial z_{1}}%
dz_{1}\right) ^{2}\leq q_{i}\left( -\varepsilon ^{-\theta }x_{3}\right)
\dint\nolimits_{a_{i}^{-}}^{a_{i}^{+}}\left( \frac{\partial \psi }{\partial
z_{1}}\right) ^{2}dz_{1}.
\end{equation*}

Thanks to the hypothesis (\ref{equ5}), one has the following Poincar\'{e}
estimate%
\begin{equation*}
\dint\nolimits_{Z_{\varepsilon ,ij}\left( \omega \right) }\psi
^{2}dz_{1}dz_{2}\leq C\dint\nolimits_{Z_{\varepsilon ,ij}\left( \omega
\right) }\left\vert \nabla \psi \right\vert ^{2}dz_{1}dz_{2}.
\end{equation*}

Defining $z_{1}=\left( x_{1}-\varepsilon i\right) /\varepsilon $ and $%
z_{2}=\left( x_{2}-\varepsilon j\right) /\varepsilon $, one gets the first
estimate.

We replace $\Phi $ by $v_{\varepsilon }$ in the variational formulation (\ref%
{equ13}). Since the matrices $K_{\varepsilon }^{+}$ and $K_{\varepsilon
}^{-} $ are bounded, symmetric and positive definite, we deduce, using the
first part of this Lemma, that%
\begin{equation*}
\begin{array}{l}
\dint\nolimits_{\Omega _{f}^{+,\varepsilon }}\left\vert v_{\varepsilon
,d}\right\vert ^{2}dx+\dint\nolimits_{\Omega _{h,f}^{-,\varepsilon
}}\left\vert v_{\varepsilon ,d}\right\vert ^{2}dx+\mu \varepsilon
^{2}\dint\nolimits_{Y_{\varepsilon }\left( \omega \right) }\left\vert \nabla
v_{\varepsilon ,s}\right\vert ^{2}dx+\gamma \dint\nolimits_{\Gamma
_{0,\varepsilon }^{+}}\left\vert \left( v_{\varepsilon ,s}\right) _{\tau
}\right\vert ^{2}dx^{\prime } \\ 
\quad +\gamma \dint\nolimits_{\Gamma _{h,\varepsilon }^{-}}\left\vert \left(
v_{\varepsilon ,s}\right) _{\tau }\right\vert ^{2}dx^{\prime }\leq C\left(
\dint\nolimits_{\Omega _{f}^{+,\varepsilon }}g^{+}\cdot v_{\varepsilon
,d}dx+\dint\nolimits_{\Omega _{h,f}^{-,\varepsilon }}g^{-}\cdot
v_{\varepsilon ,d}dx\right) ,%
\end{array}%
\end{equation*}%
hence, using Cauchy-Schwarz inequality%
\begin{equation*}
\dint\nolimits_{\Omega _{f}^{\varepsilon }\left( \omega \right) }\left\vert
v_{\varepsilon }\right\vert ^{2}dx\leq C\left( \dint\nolimits_{\Omega
_{f}^{+,\varepsilon }}\left\vert v_{\varepsilon }\right\vert ^{2}dx\right)
^{1/2}+C\left( \dint\nolimits_{\Omega _{h,f}^{-,\varepsilon }}\left\vert
v_{\varepsilon }\right\vert ^{2}dx\right) ^{1/2}\leq C\left(
\dint\nolimits_{\Omega _{f}^{\varepsilon }\left( \omega \right) }\left\vert
v_{\varepsilon }\right\vert ^{2}dx\right) ^{1/2}
\end{equation*}%
and%
\begin{equation*}
\varepsilon ^{2}\dint\nolimits_{Y_{\varepsilon }\left( \omega \right)
}\left\vert \nabla v_{\varepsilon }\right\vert ^{2}dx\leq C\left(
\dint\nolimits_{\Omega _{f}^{\varepsilon }\left( \omega \right) }\left\vert
v_{\varepsilon }\right\vert ^{2}dx\right) ^{1/2}.
\end{equation*}

Using a trace theorem in the normalized fissure $Z_{\varepsilon ,ij}\left(
\omega \right) $, we have%
\begin{equation*}
\dint\nolimits_{\partial Z_{\varepsilon ,ij}\left( \omega \right) }\psi
^{2}dz_{1}dz_{2}\leq C\dint\nolimits_{Z_{\varepsilon ,ij}\left( \omega
\right) }\left\vert \nabla \psi \right\vert ^{2}dz_{1}dz_{2},
\end{equation*}%
which ends the proof.
\end{proof}

We now deal with the pressure. We define the extension $\widetilde{p}%
_{\varepsilon ,d}$ of the pressure $p_{\varepsilon ,d}$ in the solid parts
of the porous media%
\begin{equation*}
\widetilde{p}_{\varepsilon ,d}=\left\{ 
\begin{array}{ll}
p_{\varepsilon ,d} & \text{in }\Omega _{f}^{+,\varepsilon }\cup \Omega
_{h,f}^{-,\varepsilon }, \\ 
\dfrac{1}{\left\vert Z^{1}\right\vert }\dint_{Z^{1}}p_{\varepsilon ,d}\left(
a_{\varepsilon }z+a_{\varepsilon }l\right) dz & \forall z\in \varepsilon
Z^{2}+l\varepsilon \subset \Omega ^{+}\cup \Omega _{h}^{-}\text{, }l\in 
\mathbb{Z}^{3}.%
\end{array}%
\right.
\end{equation*}

We define the zero mean-value pressures%
\begin{equation*}
\left\{ 
\begin{array}{rcll}
p_{\varepsilon }^{+} & = & \widetilde{p}_{\varepsilon ,d}-\dfrac{1}{%
\left\vert \Omega ^{+}\right\vert }\dint_{\Omega ^{+}}\widetilde{p}%
_{\varepsilon ,d}dx & \text{in\ }\Omega ^{+}, \\ 
p_{\varepsilon }^{-} & = & \widetilde{p}_{\varepsilon ,d}-\dfrac{1}{%
\left\vert \Omega _{h}^{-}\right\vert }\dint_{\Omega _{h}^{-}}\widetilde{p}%
_{\varepsilon ,d}dx & \text{in\ }\Omega _{h}^{-}, \\ 
\overline{p_{\varepsilon }} & = & p_{\varepsilon ,s}-\dfrac{1}{\left\vert
Y_{\varepsilon ,ij}\left( \omega \right) \right\vert }\dint_{Y_{\varepsilon
,ij}\left( \omega \right) }p_{\varepsilon ,s}dx & \text{in }Y_{\varepsilon
,ij}\left( \omega \right) \text{, }\left( i,j\right) \in I_{\varepsilon
}\left( \omega \right) .%
\end{array}%
\right.
\end{equation*}

\begin{lemma}
\label{estim-press}There exists a non-random constant $C$ independent of $%
\varepsilon $ such that%
\begin{equation*}
\begin{array}{rcccrcccccc}
\dint_{\Omega ^{+}}\left\vert \nabla p_{\varepsilon }^{+}\right\vert ^{2}dx
& \leq & C, &  & \dint_{\Omega _{h}^{-}}\left\vert \nabla p_{\varepsilon
}^{-}\right\vert ^{2}dx & \leq & C, &  & \dint_{Y_{\varepsilon }\left(
\omega \right) }\left( \overline{p_{\varepsilon }}\right) ^{2}dx & \leq & C,
\\ 
\dint_{\Omega ^{+}}\left( p_{\varepsilon }^{+}\right) ^{2}dx & \leq & C, & 
& \dint_{\Omega _{h}^{-}}\left( p_{\varepsilon }^{-}\right) ^{2}dx & \leq & 
C. &  &  &  & 
\end{array}%
\end{equation*}
\end{lemma}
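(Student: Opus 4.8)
The strategy is to bound the two Darcy pressures $p_{\varepsilon }^{+},p_{\varepsilon }^{-}$ and the Stokes pressure $\overline{p_{\varepsilon }}$ in the fissures by two different mechanisms, in each case reducing everything to the velocity bounds of Lemma \ref{estim-vites}.

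For $p_{\varepsilon }^{+}$ (and symmetrically $p_{\varepsilon }^{-}$): inside the fluid phase the Darcy equations in (\ref{equ10}) give $\nabla p_{\varepsilon ,d}=\mu ^{+}\left( K_{\varepsilon }^{+}\right) ^{-1}v_{\varepsilon ,d}-g^{+}$ on $\Omega _{f}^{+,\varepsilon }$, and since $K^{+}$ is bounded and positive definite, $g^{+}\in \mathbf{L}^{2}$, and $\dint_{\Omega _{f}^{\varepsilon }\left( \omega \right) }\left\vert v_{\varepsilon }\right\vert ^{2}dx\leq C$ (Lemma \ref{estim-vites}), one gets $\dint_{\Omega _{f}^{+,\varepsilon }}\left\vert \nabla p_{\varepsilon ,d}\right\vert ^{2}dx\leq C$; as $\widetilde{p}_{\varepsilon ,d}$ is locally constant on the solid phase, $\nabla p_{\varepsilon }^{+}=\nabla \widetilde{p}_{\varepsilon ,d}$ equals $\nabla p_{\varepsilon ,d}$ on $\Omega _{f}^{+,\varepsilon }$ and vanishes elsewhere, which is the first (and, symmetrically, the second) estimate. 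To bound $p_{\varepsilon }^{+}$ itself I would use a restriction operator $R_{\varepsilon }:\mathbf{H}_{0}^{1}(\Omega ^{+};\mathbb{R}^{3})\rightarrow \mathbf{H}_{0}^{1}(\Omega _{f}^{+,\varepsilon };\mathbb{R}^{3})$ of Tartar type adapted to the periodic geometry $\Omega _{f}^{+,\varepsilon }$, which is classical and satisfies the adjoint identity $\dint_{\Omega _{f}^{+,\varepsilon }}q\,\func{div}(R_{\varepsilon }\phi )\,dx=\dint_{\Omega ^{+}}\widetilde{q}\,\func{div}\phi \,dx$ together with $\left\Vert R_{\varepsilon }\phi \right\Vert _{\mathbf{L}^{2}(\Omega _{f}^{+,\varepsilon })}\leq C\left\Vert \phi \right\Vert _{\mathbf{H}_{0}^{1}(\Omega ^{+})}$. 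Then $\langle \nabla \widetilde{p}_{\varepsilon ,d},\phi \rangle =-\dint_{\Omega _{f}^{+,\varepsilon }}p_{\varepsilon ,d}\,\func{div}(R_{\varepsilon }\phi )\,dx=\dint_{\Omega _{f}^{+,\varepsilon }}(\mu ^{+}(K_{\varepsilon }^{+})^{-1}v_{\varepsilon ,d}-g^{+})\cdot R_{\varepsilon }\phi \,dx$ (the boundary term in the integration by parts vanishing because $R_{\varepsilon }\phi =0$ on $\partial \Omega _{f}^{+,\varepsilon }$), which is bounded by $C\left\Vert \phi \right\Vert _{\mathbf{H}_{0}^{1}(\Omega ^{+})}$; hence $\nabla \widetilde{p}_{\varepsilon ,d}$ is bounded in $\mathbf{H}^{-1}(\Omega ^{+})$, and as $p_{\varepsilon }^{+}$ has zero mean the Ne\v{c}as inequality on the fixed domain $\Omega ^{+}$ gives $\dint_{\Omega ^{+}}(p_{\varepsilon }^{+})^{2}dx\leq C$; the same argument in $\Omega _{h}^{-}$ gives the fifth estimate.

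For the fissure pressure I would argue fissure by fissure. The interior Stokes equation $-\mu \varepsilon ^{2}\Delta v_{\varepsilon ,s}+\nabla p_{\varepsilon ,s}=0$ in $Y_{\varepsilon ,ij}(\omega )$ yields, testing against $\phi \in \mathbf{H}_{0}^{1}(Y_{\varepsilon ,ij}(\omega );\mathbb{R}^{3})$, the bound $\left\Vert \nabla \overline{p_{\varepsilon }}\right\Vert _{\mathbf{H}^{-1}(Y_{\varepsilon ,ij})}=\left\Vert \nabla p_{\varepsilon ,s}\right\Vert _{\mathbf{H}^{-1}(Y_{\varepsilon ,ij})}\leq \mu \varepsilon ^{2}\left\Vert \nabla v_{\varepsilon ,s}\right\Vert _{\mathbf{L}^{2}(Y_{\varepsilon ,ij})}$. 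Granting an inf--sup (Ne\v{c}as) inequality $\left\Vert \overline{p_{\varepsilon }}\right\Vert _{\mathbf{L}^{2}(Y_{\varepsilon ,ij})}\leq C\varepsilon ^{-1}\left\Vert \nabla \overline{p_{\varepsilon }}\right\Vert _{\mathbf{H}^{-1}(Y_{\varepsilon ,ij})}$ with $C$ independent of $(i,j)$ and $\varepsilon $, one gets $\left\Vert \overline{p_{\varepsilon }}\right\Vert _{\mathbf{L}^{2}(Y_{\varepsilon ,ij})}\leq C\varepsilon \left\Vert \nabla v_{\varepsilon ,s}\right\Vert _{\mathbf{L}^{2}(Y_{\varepsilon ,ij})}$, whence, summing the squares over $(i,j)\in I_{\varepsilon }(\omega )$ and using $\varepsilon ^{2}\dint_{Y_{\varepsilon }(\omega )}\left\vert \nabla v_{\varepsilon }\right\vert ^{2}dx\leq C$ (Lemma \ref{estim-vites}), $\dint_{Y_{\varepsilon }(\omega )}(\overline{p_{\varepsilon }})^{2}dx\leq C$. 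To obtain that inf--sup bound I would straighten $Y_{\varepsilon ,ij}(\omega )$ by the change of variables of Section 2: by Lemmas \ref{lemma1} and \ref{lemme3} it is, for $\varepsilon $ small, a bi-Lipschitz diffeomorphism onto the box $(-\varepsilon /2,\varepsilon /2)^{2}\times (0,h)$ with Jacobian $q_{i}q_{j}+O(\varepsilon ^{2(1-\theta )})$ bounded above and below, and the further anisotropic rescaling $y_{\alpha }\mapsto y_{\alpha }/\varepsilon $ ($\alpha =1,2$) sends this box onto the fixed domain $(-1/2,1/2)^{2}\times (0,h)$, whose inf--sup constant is some $\beta _{0}>0$; pulling an inf--sup lifting back through the rescaling costs exactly one power of $\varepsilon $, since only the two thin directions are contracted, and this is the origin of the factor $\varepsilon ^{-1}$.

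The main obstacle is precisely this last point: producing the Ne\v{c}as (equivalently, Bogovskii-type) constant for the thin random fissures with the sharp scaling $\sim \varepsilon $, uniformly in $\omega $. One must check that the curvilinear change of variables of Section 2 --- whose first derivatives are given by (\ref{coor-loc3})--(\ref{coor-loc4}) and whose distortion is $Id+O(\varepsilon ^{2(1-\theta )})$ by Lemma \ref{lemme3} --- transports an inf--sup lifting without degrading the $\varepsilon $-scaling, even though some of its second derivatives carry a factor $\varepsilon ^{-\theta }$ (compare the $O(\varepsilon ^{2-3\theta })$ term in the proof of Lemma \ref{lemme3}); here the standing hypotheses $0<\theta <2/3$ and (\ref{equ5}), which force all these error terms to be $o(1)$ with probability one, are exactly what close the estimate. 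The remaining ingredients --- the restriction operator $R_{\varepsilon }$ and the Ne\v{c}as inequality on the fixed bounded domains $\Omega ^{+}$, $\Omega _{h}^{-}$ --- are classical.
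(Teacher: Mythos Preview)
Your plan is correct and close in spirit to the paper's argument, but it diverges at two points where the paper takes a shorter path.

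\textbf{Darcy pressures.} Once you have $\int_{\Omega _{f}^{+,\varepsilon }}\left\vert \nabla p_{\varepsilon ,d}\right\vert ^{2}dx\leq C$ and have observed that $\nabla p_{\varepsilon }^{+}=\nabla \widetilde{p}_{\varepsilon ,d}$ is supported in the fluid phase, the paper simply applies Poincar\'{e}--Wirtinger on the fixed domain $\Omega ^{+}$ to the zero-mean function $p_{\varepsilon }^{+}$ and is done. Your Tartar restriction operator plus Ne\v{c}as argument is the standard device when the extended pressure is \emph{not} known to lie in $H^{1}$; it is correct, but unnecessary here since you have already established exactly the gradient bound that feeds Poincar\'{e}--Wirtinger.

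\textbf{Fissure pressure.} The paper does not use the curvilinear coordinates of Section~2 at all for this step. It uses only the linear rescaling $z_{\alpha }=(x_{\alpha }-\varepsilon i)/\varepsilon $ to pass to the normalized fissure $Z_{\varepsilon ,ij}(\omega )$ of (\ref{equ14}), solves $\func{div}_{z}\Phi _{\varepsilon }^{0}=\overline{p_{\varepsilon }}$ there with homogeneous Dirichlet data, and asserts (via (\ref{equ5})) that the lifting constant in (\ref{equ18}) is uniform in $\varepsilon $ and $\omega $. Pulling $\Phi _{\varepsilon }^{0}$ back by the anisotropic map $\Phi _{\varepsilon }=(\varepsilon (\Phi _{\varepsilon }^{0})_{1},\varepsilon (\Phi _{\varepsilon }^{0})_{2},(\Phi _{\varepsilon }^{0})_{3})$ yields $\func{div}\Phi _{\varepsilon }=\overline{p_{\varepsilon }}$ on $Y_{\varepsilon ,ij}(\omega )$ together with the scaling (\ref{equ19}), and testing (\ref{equ10})$_{5}$ against $\Phi _{\varepsilon }$ gives the bound. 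This is exactly your inf--sup/Bogovskii argument, but executed on $Z_{\varepsilon ,ij}(\omega )$ rather than on the straightened box; the gain is that the second-derivative issues you correctly flag (the $O(\varepsilon ^{2-3\theta })$ terms from Lemma~\ref{lemme3}) never enter. The residual point---uniformity of the Bogovskii constant on $Z_{\varepsilon ,ij}(\omega )$, whose lateral boundary has slope $\sim \varepsilon ^{-\theta }$---is handled in the paper by appeal to (\ref{equ5}) (the cross-section stays between $c_{1}^{2}$ and $c_{2}^{2}$), whereas your straightening route makes that uniformity explicit at the cost of the extra machinery.
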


\begin{proof}
We multiply (\ref{equ10})$_{1,2}$ by $\nabla p_{\varepsilon }$ and obtain%
\begin{equation*}
\begin{array}{rcl}
\dint_{\Omega _{f}^{+,\varepsilon }}\left\vert \nabla p_{\varepsilon
,d}\right\vert ^{2}dx & = & -\dint\nolimits_{\Omega _{f}^{+,\varepsilon
}}g^{+}\cdot \nabla p_{\varepsilon ,d}dx+\mu ^{+}\dint\nolimits_{\Omega
_{f}^{+,\varepsilon }}\left( K_{\varepsilon }^{+}\right) ^{-1}v_{\varepsilon
,d}\cdot \nabla p_{\varepsilon ,d}dx, \\ 
\dint\nolimits_{\Omega _{h,f}^{-,\varepsilon }}\left\vert \nabla
p_{\varepsilon ,d}\right\vert ^{2}dx & = & -\dint\nolimits_{\Omega
_{h,f}^{-,\varepsilon }}g^{-}\cdot \nabla p_{\varepsilon ,d}dx+\mu
^{-}\dint\nolimits_{\Omega _{h,f}^{-,\varepsilon }}\left( K_{\varepsilon
}^{-}\right) ^{-1}v_{\varepsilon ,d}\cdot \nabla p_{\varepsilon ,d}dx.%
\end{array}%
\end{equation*}

The boundary conditions (\ref{equ10})$_{4}$, the smoothness of $g^{\pm }$
and Lemma \ref{estim-vites} imply%
\begin{equation*}
\int_{\Omega ^{+}}\left\vert \nabla p_{\varepsilon }^{+}\right\vert
^{2}dx\leq C\text{ ; }\int_{\Omega _{h}^{-}}\left\vert \nabla p_{\varepsilon
}^{-}\right\vert ^{2}dx\leq C.
\end{equation*}

Using Poincar\'{e}-Wirtinger' inequality, we have%
\begin{equation*}
\begin{array}{rcccc}
\dint_{\Omega ^{+}}\left( p_{\varepsilon }^{+}\right) ^{2}dx & \leq & 
C\left( \Omega ^{+}\right) \dint_{\Omega ^{+}}\left\vert \nabla
p_{\varepsilon }\right\vert ^{2}dx & \leq & C, \\ 
\dint_{\Omega _{h}^{-}}\left( p_{\varepsilon }^{-}\right) ^{2}dx & \leq & 
C\left( \Omega _{h}^{-}\right) \dint_{\Omega _{h}^{-}}\left\vert \nabla
p_{\varepsilon }\right\vert ^{2}dx & \leq & C.%
\end{array}%
\end{equation*}

In order to get estimates on $\overline{p_{\varepsilon }}$, we consider the
problem%
\begin{equation*}
\left\{ 
\begin{array}{rcll}
\func{div}_{z}\left( \Phi _{\varepsilon }^{0}\right) & = & \overline{%
p_{\varepsilon }}\left( \varepsilon z_{1},\varepsilon z_{2},x_{3}\right) & 
\text{in }Z_{\varepsilon ,ij}\left( \omega \right) , \\ 
\Phi _{\varepsilon }^{0} & = & 0 & \text{on }\partial Z_{\varepsilon
,ij}\left( \omega \right) ,%
\end{array}%
\right.
\end{equation*}%
where the fissure $Z_{\varepsilon ,ij}\left( \omega \right) $ is defined in (%
\ref{equ14}). This problem has a unique solution $\Phi _{\varepsilon }^{0}$
in the space $\left\{ \left( -\Delta \right) ^{-1}\nabla w\mid w\in \mathbf{L%
}^{2}\left( Z_{\varepsilon ,ij}\left( \omega \right) \right) \right\} $ (see 
\cite{Tem} for example), such that%
\begin{equation}
\dint\nolimits_{Z_{\varepsilon ,ij}\left( \omega \right) }\left\vert \nabla
\Phi _{\varepsilon }^{0}\right\vert ^{2}dzdx_{3}\leq
C\dint\nolimits_{Z_{\varepsilon ,ij}\left( \omega \right) }\left( \overline{%
p_{\varepsilon }}\right) ^{2}dzdx_{3},  \label{equ18}
\end{equation}%
where, due to the hypothesis (\ref{equ5}), the constant $C$ is non-random
and independent of $\varepsilon $. Let us define%
\begin{equation*}
\Phi _{\varepsilon }\left( x\right) =\left( 
\begin{array}{c}
\varepsilon \left( \Phi _{\varepsilon }^{0}\right) _{1}\left( \left(
x_{1}-i\varepsilon \right) /\varepsilon ,\left( x_{2}-j\varepsilon \right)
/\varepsilon ,x_{3}\right) \\ 
\varepsilon \left( \Phi _{\varepsilon }^{0}\right) _{2}\left( \left(
x_{1}-i\varepsilon \right) /\varepsilon ,\left( x_{2}-j\varepsilon \right)
/\varepsilon ,x_{3}\right) \\ 
\left( \Phi _{\varepsilon }^{0}\right) _{3}\left( \left( x_{1}-i\varepsilon
\right) /\varepsilon ,\left( x_{2}-j\varepsilon \right) /\varepsilon
,x_{3}\right)%
\end{array}%
\right) .
\end{equation*}

Then%
\begin{equation*}
\left\{ 
\begin{array}{rcll}
\func{div}\left( \Phi _{\varepsilon }\right) \left( x\right) & = & \func{div}%
_{z}\left( \Phi _{\varepsilon }^{0}\right) \left( x\right) =\overline{%
p_{\varepsilon }}\left( x\right) & \text{in }Y_{\varepsilon ,ij}\left(
\omega \right) , \\ 
\Phi _{\varepsilon } & = & 0 & \text{on }\partial Y_{\varepsilon ,ij}\left(
\omega \right)%
\end{array}%
\right.
\end{equation*}%
and, thanks to (\ref{equ18})%
\begin{equation}
\dint\nolimits_{Y_{\varepsilon }\left( \omega \right) }\left\vert \nabla
\Phi _{\varepsilon }\right\vert ^{2}dx\leq \dint\nolimits_{Z_{\varepsilon
,ij}\left( \omega \right) }\left\vert \nabla \Phi _{\varepsilon
}^{0}\right\vert ^{2}dzdx_{3}\leq C\dint\nolimits_{Z_{\varepsilon ,ij}\left(
\omega \right) }\left( \overline{p_{\varepsilon }}\right) ^{2}dzdx_{3}=\frac{%
C}{\varepsilon ^{2}}\dint\nolimits_{Y_{\varepsilon }\left( \omega \right)
}\left( \overline{p_{\varepsilon }}\left( x\right) \right) ^{2}dx.
\label{equ19}
\end{equation}

Multiplying (\ref{equ10})$_{5}$ by $\Phi _{\varepsilon }$ in $Y_{\varepsilon
}\left( \omega \right) $, we get%
\begin{equation*}
\mu \varepsilon ^{2}\dint\nolimits_{Y_{\varepsilon }\left( \omega \right)
}\nabla v_{\varepsilon }\cdot \nabla \Phi _{\varepsilon
}dx-\dint\nolimits_{Y_{\varepsilon }\left( \omega \right) }\left( \overline{%
p_{\varepsilon }}\left( x\right) \right) ^{2}dx=0.
\end{equation*}

Using the inequality (\ref{equ19}), we have%
\begin{equation*}
\dint\nolimits_{Y_{\varepsilon }\left( \omega \right) }\left( \overline{%
p_{\varepsilon }}\left( x\right) \right) ^{2}dx\leq C\left(
\dint\nolimits_{Y_{\varepsilon }\left( \omega \right) }\left( \overline{%
p_{\varepsilon }}\left( x\right) \right) ^{2}dx\right) ^{1/2}\left(
\varepsilon ^{2}\dint\nolimits_{Y_{\varepsilon }\left( \omega \right)
}\left\vert \nabla v_{\varepsilon }\right\vert ^{2}dx\right) ^{1/2}.
\end{equation*}

Thanks to Lemma \ref{estim-vites}, we derive the last estimate in $%
Y_{\varepsilon }\left( \omega \right) $.
\end{proof}

\subsection{Convergence}

Observe the following result.

\begin{lemma}
\label{conv-vites-fiss}

\begin{enumerate}
\item For every $\varphi \in C_{0}^{1}\left( Y_{h}\right) $, we have%
\begin{equation*}
\underset{\varepsilon \rightarrow 0}{\lim }\underset{\left( i,j\right) \in
I_{\varepsilon }\left( \omega \right) }{\dsum }\dint\nolimits_{Y_{%
\varepsilon ,ij}\left( \omega \right) }\varphi \left(
x_{1},x_{2},x_{3}\right) dx=h\left\langle q^{2}\right\rangle
\dint\nolimits_{\Sigma }\varphi \left( x^{\prime },0\right) dx^{\prime }%
\text{, almost surely.}
\end{equation*}

\item Let $\left( w_{\varepsilon }\right) _{\varepsilon }$ be a sequence
such that $\sup_{\varepsilon }\int\nolimits_{Y_{\varepsilon }\left( \omega
\right) }\left( w_{\varepsilon }\right) ^{2}dx<+\infty $. There exists a
subsequence, still denoted in the same way, such that%
\begin{equation*}
\underset{\varepsilon \rightarrow 0}{\lim }\dint\nolimits_{Y_{\varepsilon
}\left( \omega \right) }w_{\varepsilon }\varphi dx=h\left\langle q^{2}\left(
0\right) \right\rangle \dint\nolimits_{\Sigma }w\left( x^{\prime },0\right)
\varphi \left( x^{\prime },0\right) dx^{\prime }\text{, }\forall \varphi \in
C_{0}\left( 
%TCIMACRO{\U{211d} }%
%BeginExpansion
\mathbb{R}
%EndExpansion
^{3}\right) \text{, almost surely.}
\end{equation*}
\end{enumerate}
\end{lemma}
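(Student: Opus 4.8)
The plan is to read both parts as statements about the weak-$\ast $ limit of the measures $\mathbf{1}_{Y_{\varepsilon }(\omega )}\,dx$ and $w_{\varepsilon }\mathbf{1}_{Y_{\varepsilon }(\omega )}\,dx$; I would prove the first assertion directly and obtain the second from it by a soft functional-analytic argument. Throughout, I fix an event $\omega $ belonging to the full-$P$-measure set on which the bounds (\ref{equ5}), (\ref{random}), the ergodic theorem (\ref{Ergo}), and the two-point refinement of it described below all hold simultaneously.

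For the first assertion, the starting observation is that $\varphi $ is Lipschitz and that, directly from the definition of $Y_{\varepsilon ,ij}(\omega )$, the horizontal slice of $Y_{\varepsilon ,ij}(\omega )$ at height $x_{3}$ is the rectangle of area $\varepsilon ^{2}q_{i}(-\varepsilon ^{-\theta }x_{3})\,q_{j}(-\varepsilon ^{-\theta }x_{3})$ lying above $(i\varepsilon ,j\varepsilon )$, with $q_{i}$ as in Lemma \ref{lemme3}. Replacing $\varphi $ on that slice by its value at $(i\varepsilon ,j\varepsilon ,x_{3})$ and integrating in $x_{3}$,
\[
\int_{Y_{\varepsilon ,ij}(\omega )}\varphi \,dx=\varepsilon ^{2}\int_{-h}^{0}q_{i}(-\varepsilon ^{-\theta }x_{3})\,q_{j}(-\varepsilon ^{-\theta }x_{3})\,\varphi (i\varepsilon ,j\varepsilon ,x_{3})\,dx_{3}+O\!\left( \varepsilon ^{3}\Vert \nabla \varphi \Vert _{\infty }\right) .
\]
Summed over the $O(\varepsilon ^{-2})$ indices of $I_{\varepsilon }(\omega )$ the remainders add up to $O(\varepsilon )$, and the $O(\varepsilon ^{-1})$ indices for which $(i\varepsilon ,j\varepsilon )$ lies within $O(\varepsilon )$ of $\partial \Sigma $ likewise contribute $O(\varepsilon )$ and may be dropped. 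One is thus reduced to passing to the limit in the mixed sum $\varepsilon ^{2}\sum_{(i,j)}\int_{-h}^{0}q_{i}(-\varepsilon ^{-\theta }x_{3})\,q_{j}(-\varepsilon ^{-\theta }x_{3})\,\varphi (i\varepsilon ,j\varepsilon ,x_{3})\,dx_{3}$.

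Here the macroscopic scale $\varepsilon $ (the grid $(i\varepsilon ,j\varepsilon )$ covering $\Sigma $) and the mesoscopic scale $\varepsilon ^{\theta }$ (the oscillation of $q_{i}(-\varepsilon ^{-\theta }x_{3})$ in $x_{3}$) are decoupled, and I would average them in turn: fix $\delta >0$, cut $\Sigma \times (-h,0)$ into cubes of side $\delta $, freeze $\varphi $ at the centre of each, and on each cube (i) recognise $\varepsilon ^{2}\sum_{(i,j)}$ as the Lebesgue measure of the horizontal section and (ii) substitute $s=-\varepsilon ^{-\theta }x_{3}$, turning the inner integral into $\varepsilon ^{\theta }\int q_{i}(s)\,q_{j}(s)\,ds$ over a window whose length tends to $+\infty $, so that (\ref{Ergo}) applies; then let $\varepsilon \to 0$ and afterwards $\delta \to 0$ to recover the limit asserted in the statement. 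The one genuinely delicate point — the step I expect to require most care — is the off-diagonal products $q_{i}(s)\,q_{j}(s)=q(s+\alpha _{i}(\omega ),\omega )\,q(s+\alpha _{j}(\omega ),\omega )$ for $i\neq j$: one needs the ergodic theorem for the two-parameter family $s\mapsto q(s+a,\omega )\,q(s+b,\omega )$, first for a countable dense set of pairs $(a,b)$ and then for all of them by the uniform Lipschitz control on $q$ in (\ref{equ5}), so that the random but bounded shifts $\alpha _{i}(\omega ),\alpha _{j}(\omega )\in [-c_{4},c_{4}]$ can be inserted; the diagonal $i=j$ involves only $O(\varepsilon ^{-1})$ terms and is harmless.

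For the second assertion, Cauchy--Schwarz and the hypothesis give, for every $\varphi \in C_{0}(\mathbb{R}^{3})$,
\[
\left\vert \int_{Y_{\varepsilon }(\omega )}w_{\varepsilon }\varphi \,dx\right\vert \leq \left( \int_{Y_{\varepsilon }(\omega )}w_{\varepsilon }^{2}\,dx\right) ^{1/2}\left( \int_{Y_{\varepsilon }(\omega )}\varphi ^{2}\,dx\right) ^{1/2}\leq C\left( \int_{Y_{\varepsilon }(\omega )}\varphi ^{2}\,dx\right) ^{1/2},
\]
and the first assertion (which holds just as well for bounded continuous test functions) applied to $\varphi ^{2}$ bounds the right-hand side, uniformly in $\varepsilon $, by a constant multiple of $\Vert \varphi (\cdot ,0)\Vert _{L^{2}(\Sigma )}$ up to an $o(1)$. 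Hence the functionals $\varphi \mapsto \int_{Y_{\varepsilon }(\omega )}w_{\varepsilon }\varphi \,dx$ on $C_{0}(\mathbb{R}^{3})$ are equibounded and equicontinuous for the seminorm $\varphi \mapsto \Vert \varphi (\cdot ,0)\Vert _{L^{2}(\Sigma )}$; by separability I extract a subsequence along which they converge, for every $\varphi $, to a functional $L$ with $|L(\varphi )|\leq C\Vert \varphi (\cdot ,0)\Vert _{L^{2}(\Sigma )}$. Thus $L$ vanishes whenever $\varphi (\cdot ,0)=0$ and descends to a bounded functional on the dense range of the trace map $\varphi \mapsto \varphi (\cdot ,0)$ in $L^{2}(\Sigma )$, and the Riesz representation theorem furnishes $w\in L^{2}(\Sigma )$ with $L(\varphi )=h\langle q^{2}(0)\rangle \int_{\Sigma }\varphi (x^{\prime },0)\,w(x^{\prime })\,dx^{\prime }$, which is the claim. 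This final step is routine once the first assertion is available, in which the substance of the lemma resides.
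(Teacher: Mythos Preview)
Your approach matches the paper's in both parts. For Part 1 the paper performs exactly your change of variables (using the curvilinear coordinates of Section 2 to reach the expression $\sum_{(i,j)}\varepsilon^{2}\int_{0}^{h}\varphi(i\varepsilon,j\varepsilon,-t)\,q_{i}(\varepsilon^{-\theta}t)\,q_{j}(\varepsilon^{-\theta}t)\,dt$), then invokes the ergodic result (\ref{Ergo}); notably, the paper passes from $q_{i}q_{j}$ to $q^{2}$ without comment, so the off-diagonal shift issue you single out as ``the one genuinely delicate point'' is precisely what the paper glosses over rather than resolves. For Part 2 the paper uses weak-$\ast$ compactness of the measures $w_{\varepsilon}\mathbf{1}_{Y_{\varepsilon}(\omega)}\,dx$ together with Fenchel's inequality $\int w_{\varepsilon}^{2}\,d\nu_{\varepsilon}\geq 2\int w_{\varepsilon}\varphi\,d\nu_{\varepsilon}-\int\varphi^{2}\,d\nu_{\varepsilon}$ to obtain the $L^{2}(\Sigma,\nu)$ bound on the limit and then Riesz representation, which is your Cauchy--Schwarz/equicontinuity argument repackaged.
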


\begin{proof}
1. Define $\xi _{i,\varepsilon }\left( y\right) =\varepsilon i-\varepsilon
r_{i}\left( \varepsilon ^{-\theta }t\right) -y_{1}q_{i}\left( \varepsilon
^{-\theta }t\right) $ and $\xi _{j,\varepsilon }\left( y\right) =\varepsilon
j-\varepsilon r_{j}\left( \varepsilon ^{-\theta }t\right) -y_{1}q_{j}\left(
\varepsilon ^{-\theta }t\right) $ where $r_{i}\left( \varepsilon ^{-\theta
}t\right) =r\left( \varepsilon ^{-\theta }t+\beta _{i}\left( \omega \right)
,\omega \right) $ and $q_{i}\left( \varepsilon ^{-\theta }t\right) =q\left(
\varepsilon ^{-\theta }t+\alpha _{i}\left( \omega \right) ,\omega \right) $%
.\ According to the properties of the above-defined curvilinear coordinates,
we have%
\begin{equation*}
\begin{array}{l}
\underset{\varepsilon \rightarrow 0}{\lim }\underset{\left( i,j\right) \in
I_{\varepsilon }\left( \omega \right) }{\dsum }\dint\nolimits_{Y_{%
\varepsilon ,ij}\left( \omega \right) }\varphi \left(
x_{1},x_{2},x_{3}\right) dx \\ 
\quad 
\begin{array}{ll}
= & \underset{\varepsilon \rightarrow 0}{\lim }\underset{\left( i,j\right)
\in I_{\varepsilon }\left( \omega \right) }{\dsum }\dint_{-\varepsilon
/2}^{\varepsilon /2}\dint_{-\varepsilon /2}^{\varepsilon
/2}\dint\nolimits_{0}^{h}\varphi \left( \xi _{i,\varepsilon }\left( y\right)
,\xi _{j,\varepsilon }\left( y\right) ,-t\right) q_{i}\left( \varepsilon
^{-\theta }t\right) q_{j}\left( \varepsilon ^{-\theta }t\right) dydt \\ 
= & \underset{\varepsilon \rightarrow 0}{\lim }\underset{\left( i,j\right)
\in I_{\varepsilon }\left( \omega \right) }{\dsum }\varepsilon
^{2}\dint\nolimits_{0}^{h}\varphi \left( \varepsilon i,\varepsilon
j,-t\right) q^{2}\left( \varepsilon ^{-\theta }t\right) dt \\ 
= & \underset{\varepsilon \rightarrow 0}{\lim }\dfrac{h}{\varepsilon
^{-\theta }h}\dint\nolimits_{0}^{\varepsilon ^{-\theta }h}\left(
\dint\nolimits_{\Sigma }\varphi \left( x_{1},x_{2},-\varepsilon ^{\theta
}s\right) dx_{1}dx_{2}\right) q^{2}\left( s\right) ds \\ 
= & h\left\langle q^{2}\right\rangle \dint\nolimits_{\Sigma }\varphi \left(
x^{\prime },0\right) dx^{\prime },%
\end{array}%
\end{array}%
\end{equation*}%
where we have used the ergodicity result (\ref{Ergo}).

2. The sequence of measures $\left( \nu _{\varepsilon }\right) _{\varepsilon
}$, with $\nu _{\varepsilon }=\mathbf{1}_{Y_{\varepsilon }\left( \omega
\right) }dx$, $\mathbf{1}_{A}$ being the characteristic function of the set $%
A$, thus converges in the weak sense of measures, when $\varepsilon $ goes
to $0$, to the measure $\nu =h\left\langle q^{2}\right\rangle \mathbf{1}%
_{\Sigma }\left( x^{\prime }\right) dx^{\prime }$. Using the hypothesis on $%
\left( w_{\varepsilon }\right) _{\varepsilon }$, we deduce that the sequence
of measures $\left( v_{\varepsilon }w_{\varepsilon }\right) _{\varepsilon }$
has bounded variation. Up to some subsequence, the sequence $\left(
v_{\varepsilon }w_{\varepsilon }\right) _{\varepsilon }$ thus converges to
some $\chi _{0}$ in the weak sense of measures. For every $\varphi \in
C_{0}\left( 
%TCIMACRO{\U{211d} }%
%BeginExpansion
\mathbb{R}
%EndExpansion
^{3}\right) $, one has, thanks to Fenchel's inequality%
\begin{equation*}
\dint\nolimits_{%
%TCIMACRO{\U{211d} }%
%BeginExpansion
\mathbb{R}
%EndExpansion
^{3}}\left( w_{\varepsilon }\right) ^{2}d\nu _{\varepsilon }\geq
2\dint\nolimits_{%
%TCIMACRO{\U{211d} }%
%BeginExpansion
\mathbb{R}
%EndExpansion
^{3}}w_{\varepsilon }\varphi d\nu _{\varepsilon }-\dint\nolimits_{%
%TCIMACRO{\U{211d} }%
%BeginExpansion
\mathbb{R}
%EndExpansion
^{3}}\varphi ^{2}d\nu _{\varepsilon }.
\end{equation*}

Passing to the limit, we get%
\begin{equation*}
+\infty >\underset{\varepsilon \rightarrow 0}{\lim \inf }\dint\nolimits_{%
%TCIMACRO{\U{211d} }%
%BeginExpansion
\mathbb{R}
%EndExpansion
^{3}}\left( w_{\varepsilon }\right) ^{2}d\nu _{\varepsilon }\geq
2\left\langle \chi _{0},\varphi \right\rangle -\dint\nolimits_{%
%TCIMACRO{\U{211d} }%
%BeginExpansion
\mathbb{R}
%EndExpansion
^{2}}\varphi ^{2}\left( x^{\prime },0\right) d\nu .
\end{equation*}

Thus%
\begin{equation*}
\sup \left\{ \left\langle \chi _{0},\varphi \right\rangle \mid \varphi \in
C_{0}\left( 
%TCIMACRO{\U{211d} }%
%BeginExpansion
\mathbb{R}
%EndExpansion
^{3}\right) \text{, }\dint\nolimits_{%
%TCIMACRO{\U{211d} }%
%BeginExpansion
\mathbb{R}
%EndExpansion
^{2}}\varphi ^{2}\left( x^{\prime },0\right) d\nu \leq 1\right\} <+\infty .
\end{equation*}

Using Riesz' representation theorem, we can identify $\chi _{0}$ with $w\nu $%
, for some $w\in L^{2}\left( 
%TCIMACRO{\U{211d} }%
%BeginExpansion
\mathbb{R}
%EndExpansion
^{2}\right) $.
\end{proof}

\begin{remark}
\label{Remark1}

\begin{enumerate}
\item From Lemma \ref{estim-vites}, using the above result, we deduce that,
up to some subsequence%
\begin{equation*}
\underset{\varepsilon \rightarrow 0}{\lim }\dint\nolimits_{Y_{\varepsilon
}\left( \omega \right) }v_{\varepsilon }\cdot \Phi dx=h\left\langle
q^{2}\left( 0\right) \right\rangle \dint\nolimits_{\Sigma }v_{f}\cdot \Phi
dx^{\prime }\text{, }\forall \Phi \in \mathbf{C}_{0}\left( \Sigma ;%
%TCIMACRO{\U{211d} }%
%BeginExpansion
\mathbb{R}
%EndExpansion
^{3}\right) ,
\end{equation*}%
almost surely, and from Lemma \ref{estim-press}, we deduce the existence of $%
\pi _{0}\in L^{2}\left( \Sigma \right) $ such that, up to some subsequence%
\begin{equation*}
\underset{\varepsilon \rightarrow 0}{\lim }\dint\nolimits_{Y_{\varepsilon
}\left( \omega \right) }\overline{p_{\varepsilon }}\varphi
dx=\dint\nolimits_{\Sigma }\pi _{0}\varphi d\nu =h\left\langle
q^{2}\right\rangle \dint\nolimits_{\Sigma }\pi _{0}\left( x^{\prime }\right)
\varphi \left( x^{\prime }\right) dx^{\prime }\text{, }\forall \varphi \in
C_{0}\left( \Sigma \right) ,
\end{equation*}%
almost surely, where $x^{\prime }=\left( x_{1},x_{2}\right) $.

In the rest of the paper, we will no more indicate this almost surely
convergence where there is no doubt.

\item From Lemma \ref{estim-vites} and the above computations, we deduce the
existence of $v_{0}\in \mathbf{L}^{2}\left( \Omega ;%
%TCIMACRO{\U{211d} }%
%BeginExpansion
\mathbb{R}
%EndExpansion
^{3}\right) $ such that, up to some subsequence%
\begin{equation*}
\begin{array}{rcll}
v_{\varepsilon }\mid _{\Omega _{f}^{+,\varepsilon }} & \underset{\varepsilon
\rightarrow 0}{\rightarrow } & v_{0}\mid _{\Omega ^{+}}=:v_{0,d}^{+} & \text{%
w-}\mathbf{L}^{2}\left( \Omega ^{+};%
%TCIMACRO{\U{211d} }%
%BeginExpansion
\mathbb{R}
%EndExpansion
^{3}\right) , \\ 
v_{\varepsilon }\mid _{\Omega _{h,f}^{-,\varepsilon }} & \underset{%
\varepsilon \rightarrow 0}{\rightarrow } & v_{0}\mid _{\Omega
_{h}^{-}}=:v_{0,d}^{-} & \text{w-}\mathbf{L}^{2}\left( \Omega _{h}^{-};%
%TCIMACRO{\U{211d} }%
%BeginExpansion
\mathbb{R}
%EndExpansion
^{3}\right) , \\ 
\underset{\varepsilon \rightarrow 0}{\lim }\dint\nolimits_{Y_{\varepsilon
}\left( \omega \right) }v_{\varepsilon }\cdot \Phi dx & = & h\left\langle
q^{2}\right\rangle \dint\nolimits_{\Sigma }v_{0}\cdot \Phi dx^{\prime } & 
\forall \Phi \in \mathbf{C}_{0}\left( \Sigma ;%
%TCIMACRO{\U{211d} }%
%BeginExpansion
\mathbb{R}
%EndExpansion
^{3}\right) .%
\end{array}%
\end{equation*}

\item We set $v_{0}\mid _{\Sigma }=:v_{0,f}$. For every $\varphi \in
C_{0}^{1}\left( \Sigma \right) $, one has%
\begin{equation*}
\begin{array}{ccl}
0=\underset{\varepsilon \rightarrow 0}{\lim }\underset{\left( i,j\right) \in
I_{\varepsilon }\left( \omega \right) }{\dsum }\dint\nolimits_{Y_{%
\varepsilon ,ij}\left( \omega \right) }\func{div}\left( v_{\varepsilon
}\right) \varphi dx & = & -\underset{\varepsilon \rightarrow 0}{\lim }%
\underset{\left( i,j\right) \in I_{\varepsilon }\left( \omega \right) }{%
\dsum }\dint\nolimits_{Y_{\varepsilon ,ij}\left( \omega \right)
}v_{\varepsilon }\cdot \nabla \varphi dx \\ 
& = & -h\left\langle q^{2}\right\rangle \dint\nolimits_{\Sigma }v_{0,f}\cdot
\nabla \varphi dx^{\prime } \\ 
& = & h\left\langle q^{2}\right\rangle \dint\nolimits_{\Sigma }\func{div}%
\left( v_{0,f}\right) \varphi dx^{\prime },%
\end{array}%
\end{equation*}%
thanks to the estimates of Lemma \ref{estim-vites}. Thus $\func{div}\left(
v_{0,f}\right) =0$ in $\Sigma $.

\item It is easily seen that $\func{div}\left( v_{0,d}^{+}\right) =0$ in $%
\Omega ^{+}$ and $\func{div}\left( v_{0,d}^{-}\right) =0$ in $\Omega _{h}^{-}
$. For every $\varphi \in C^{1}\left( \Omega ^{+}\right) $, one has%
\begin{equation*}
0=\underset{\varepsilon \rightarrow 0}{\lim }\dint\nolimits_{\Omega
_{f}^{+,\varepsilon }}\func{div}\left( v_{\varepsilon }\right) \varphi dx=-%
\underset{\varepsilon \rightarrow 0}{\lim }\dint\nolimits_{\Omega
_{f}^{+,\varepsilon }}v_{\varepsilon }\cdot \nabla \varphi dx+\underset{%
\varepsilon \rightarrow 0}{\lim }\dint\nolimits_{\Gamma _{0,\varepsilon
}^{+}}\left( v_{\varepsilon }\right) _{3}\varphi dx^{\prime }.
\end{equation*}

Observe that%
\begin{equation*}
\underset{\varepsilon \rightarrow 0}{\lim }\dint\nolimits_{\Gamma
_{0,\varepsilon }^{+}}\left( v_{\varepsilon }\right) _{3}\varphi dx^{\prime
}=\left\langle q^{2}\left( 0\right) \right\rangle \dint\nolimits_{\Sigma
}\left( v_{0,f}\right) _{3}\varphi dx^{\prime },
\end{equation*}%
whence $\left( v_{0,d}^{+}\right) _{3}\mid _{\Sigma \times \left\{ 0\right\}
}=\left\langle q^{2}\left( 0\right) \right\rangle \left( v_{0,f}\right) _{3}$%
. In a similar way, but working in $\Omega _{h,f}^{-,\varepsilon }$, instead
of $\Omega _{f}^{+,\varepsilon }$, we have $\left( v_{0,d}^{-}\right)
_{3}\mid _{\Sigma \times \left\{ -h\right\} }=\left\langle q^{2}\left(
0\right) \right\rangle \left( v_{0,f}\right) _{3}$.

\item From Lemmas \ref{estim-press} and \ref{conv-vites-fiss}, we get, up to
some subsequence%
\begin{equation}
\begin{array}{rcll}
p_{\varepsilon }^{+} & \underset{\varepsilon \rightarrow 0}{\rightarrow } & 
p_{0}^{+} & \text{s-}L^{2}\left( \Omega ^{+}\right) , \\ 
p_{\varepsilon }^{-} & \underset{\varepsilon \rightarrow 0}{\rightarrow } & 
p_{0}^{-} & \text{s-}L^{2}\left( \Omega _{h}^{-}\right) , \\ 
\underset{\varepsilon \rightarrow 0}{\lim }\dint\nolimits_{Y_{\varepsilon
}\left( \omega \right) }\overline{p_{\varepsilon }}\varphi dx & = & 
h\left\langle q^{2}\left( 0\right) \right\rangle \dint\nolimits_{\Sigma }\pi
_{0}\varphi dx^{\prime } & \forall \varphi \in C_{0}\left( \Sigma \right) .%
\end{array}
\label{convp}
\end{equation}
\end{enumerate}
\end{remark}

We now set%
\begin{equation*}
\mathbf{V}_{0}=\left\{ 
\begin{array}{l}
v\in \mathbf{L}^{2}\left( \Omega ^{+}\cup \Omega _{h}^{-};\mathbb{R}%
^{3}\right) \mid \func{div}\left( v\right) =0\text{ in }\Omega ^{+}\cup
\Omega _{h}^{-}\text{,} \\ 
\quad v\cdot n=0\text{ on }\Gamma ^{+}\cup \Gamma ^{-}\text{, }v_{3}\mid
_{\Sigma \times \left\{ 0\right\} }=\left\langle q^{2}\left( 0\right)
\right\rangle \left( v_{,f}\right) _{3}=v_{3}\mid _{\Sigma \times \left\{
-h\right\} }%
\end{array}%
\right\} .
\end{equation*}

Every function $v\in \mathbf{V}_{0}$ can be extended in a function of $%
\mathbf{L}^{2}\left( Y_{\varepsilon }\left( \omega \right) ;\mathbb{R}%
^{3}\right) $ independent of $x_{3}$ in $Y_{\varepsilon }\left( \omega
\right) $.

We define the appropriate notion of convergence for the problem (\ref{equ10}%
).

\begin{definition}
\label{topology}A sequence $\left( V_{\varepsilon }\right) _{\varepsilon }$,
with $V_{\varepsilon }\in \mathbf{V}_{\varepsilon }$ for every $\varepsilon $%
, $\tau _{0}$-converges to some $V\in \mathbf{V}_{0}$ if%
\begin{equation*}
\left\{ 
\begin{array}{rcll}
V_{\varepsilon }\mid _{\Omega _{f}^{+,\varepsilon }} & \underset{\varepsilon
\rightarrow 0}{\rightharpoonup } & V\mid _{\Omega ^{+}}=:V_{d}^{+} & \text{w-%
}\mathbf{L}^{2}\left( \Omega ^{+};%
%TCIMACRO{\U{211d} }%
%BeginExpansion
\mathbb{R}
%EndExpansion
^{3}\right) , \\ 
V_{\varepsilon }\mid _{\Omega _{h,f}^{-,\varepsilon }} & \underset{%
\varepsilon \rightarrow 0}{\rightharpoonup } & V\mid _{\Omega
_{h}^{-}}=:V_{d}^{-} & \text{w-}\mathbf{L}^{2}\left( \Omega _{h}^{-};%
%TCIMACRO{\U{211d} }%
%BeginExpansion
\mathbb{R}
%EndExpansion
^{3}\right) , \\ 
\underset{\varepsilon \rightarrow 0}{\lim }\dint\nolimits_{Y_{\varepsilon
}\left( \omega \right) }V_{\varepsilon }\cdot \Phi dx & = & h\left\langle
q^{2}\left( 0\right) \right\rangle \dint\nolimits_{\Sigma }V_{f}\cdot \Phi
dx^{\prime } & \forall \Phi \in \mathbf{C}_{0}\left( \Sigma ;%
%TCIMACRO{\U{211d} }%
%BeginExpansion
\mathbb{R}
%EndExpansion
^{3}\right) ,%
\end{array}%
\right.
\end{equation*}%
with $V_{f}:=V\mid _{\Sigma }$.
\end{definition}

We define the functional $F_{\varepsilon }$ on $\mathbf{L}^{2}\left( \Omega
_{f}^{\varepsilon }\left( \omega \right) ;%
%TCIMACRO{\U{211d} }%
%BeginExpansion
\mathbb{R}
%EndExpansion
^{3}\right) $ through%
\begin{equation*}
F_{\varepsilon }\left( v\right) =\left\{ 
\begin{array}{l}
\mu ^{+}\dint\nolimits_{\Omega _{f}^{+,\varepsilon }}\left( K_{\varepsilon
}^{+}\right) ^{-1}v\cdot vdx+\mu ^{-}\dint\nolimits_{\Omega
_{h,f}^{-,\varepsilon }}\left( K_{\varepsilon }^{-}\right) ^{-1}v\cdot
vdx+\mu \varepsilon ^{2}\dint\nolimits_{Y_{\varepsilon }\left( \omega
\right) }\left\vert \nabla v\right\vert ^{2}dx \\ 
\quad +\gamma \dint\nolimits_{\Gamma _{0,\varepsilon }^{+}}\left(
K_{\varepsilon }^{+}\right) ^{-1/2}v_{\tau }\cdot v_{\tau }dx^{\prime
}+\gamma \dint\nolimits_{\Gamma _{h,\varepsilon }^{-}}\left( K_{\varepsilon
}^{-}\right) ^{-1/2}v_{\tau }\cdot v_{\tau }dx^{\prime } \\ 
\hfill \text{if }v\in \mathbf{V}_{\varepsilon }, \\ 
+\infty \hfill \text{otherwise.}%
\end{array}%
\right.
\end{equation*}

In order to describe the asymptotic behaviour of this functional, we
consider the $Z$-periodic solution $\Phi _{k}^{\pm }$ of the local Darcy
systems%
\begin{equation*}
\left\{ 
\begin{array}{rlll}
\left( K^{\pm }\right) ^{-1}\Phi _{k}^{\pm }-\nabla \pi _{k}^{\pm } & = & 
e_{k} & \text{in }Z^{1}\text{, }k=1,2,3, \\ 
\func{div}\left( \Phi _{k}^{\pm }\right) & = & 0 & \text{in }Z^{1}, \\ 
\Phi _{k}^{\pm }\cdot n & = & 0 & \text{on }S,%
\end{array}%
\right.
\end{equation*}%
where $\left( e_{1},e_{2},e_{3}\right) $ is the canonical basis of $\mathbb{R%
}^{3}$. We then consider the Stokes system

\begin{equation}
\left\{ 
\begin{array}{rlll}
-\Delta \eta _{k}+\nabla \xi _{k} & = & e_{k} & \text{in }Z^{\prime }=\left(
-1/2,1/2\right) ^{2}\text{, }k=1,2, \\ 
\func{div}\left( \eta _{k}\right) & = & 0 & \text{in }Z^{\prime }, \\ 
\eta _{k} & = & 0 & \text{on }\partial Z^{\prime },%
\end{array}%
\right.  \label{equ23}
\end{equation}%
where $\left( e_{1},e_{2}\right) $ is the canonical basis of $\mathbb{R}^{2}$%
, and the local scalar problem

\begin{equation}
\left\{ 
\begin{array}{rlll}
-\Delta \eta _{0} & = & 1 & \text{in }Z^{\prime }=\left( -1/2,1/2\right)
^{2}, \\ 
\eta _{0} & = & 0 & \text{on }\partial Z^{\prime }.%
\end{array}%
\right.  \label{equ24}
\end{equation}

We define the $3\times 3$ matrices $\widehat{K}^{+}$, $\widehat{K}^{-}$, the 
$2\times 2$ matrix $K_{f}$ and the constant $k_{0}$ through%
\begin{equation}
\left\{ 
\begin{array}{rlll}
\widehat{K}_{ml}^{+} & = & \dint\nolimits_{Z^{1}}\left( \Phi _{m}^{+}\right)
_{l}dz & m,l=1,2,3, \\ 
\widehat{K}_{ml}^{-} & = & \dint\nolimits_{Z^{1}}\left( \Phi _{m}^{-}\right)
_{l}dz & m,l=1,2,3, \\ 
\left( K_{f}\right) _{ml} & = & \dint\nolimits_{Z^{\prime }}\left( \eta
_{m}\right) _{l}dz & m,l=1,2, \\ 
k_{0} & = & \dint\nolimits_{Z^{\prime }}\left\vert \nabla \eta
_{0}\right\vert ^{2}dz=\dint\nolimits_{Z^{\prime }}\eta _{0}dz. & 
\end{array}%
\right.  \label{equ25}
\end{equation}

One can prove that the matrices $\widehat{K}^{\pm }$ and $K_{f}$ are
symmetric and positive definite (see \cite{San}). Our main result of this
part reads as follows.

\begin{theorem}
\label{Theorem1}Suppose that $r$ is also a stationary random process. The
sequence $\left( F_{\varepsilon }\right) _{\varepsilon }$ $\Gamma $%
-converges in the topology $\tau _{0}$ to the functional $F_{0}$ defined
through%
\begin{equation*}
F_{0}\left( v\right) =\left\{ 
\begin{array}{l}
\mu ^{+}\dint\nolimits_{\Omega ^{+}}\left( \widehat{K}^{+}\right)
^{-1}v_{d}^{+}\cdot v_{d}^{+}dx+\mu ^{-}\dint\nolimits_{\Omega
_{h}^{-}}\left( \widehat{K}^{-}\right) ^{-1}v_{d}^{-}\cdot v_{d}^{-}dx \\ 
\quad +\mu \dfrac{h\left\langle q^{2}\right\rangle }{\left\langle
q\right\rangle ^{2}}\dint\nolimits_{\Sigma }\left( K_{f}\right) ^{-1}\left(
v_{f}\right) _{\tau }\cdot \left( v_{f}\right) _{\tau }dx^{\prime }+\mu 
\dfrac{h\left\langle q^{2}\right\rangle \left\langle 1/q^{2}\right\rangle }{%
k_{0}}\dint\nolimits_{\Sigma }\left( \left( v_{f}\right) _{3}\right)
^{2}dx^{\prime } \\ 
\quad +\left\langle q^{2}\right\rangle \gamma \dint\nolimits_{\Sigma }\left(
\left( K^{\ast +}\right) ^{-1/2}+\left( K^{\ast -}\right) ^{-1/2}\right)
\left( v_{f}\right) _{\tau }\cdot \left( v_{f}\right) _{\tau }dx^{\prime }
\\ 
\hfill \text{if }v\in \mathbf{V}_{0}, \\ 
+\infty \hfill \text{ otherwise,}%
\end{array}%
\right. 
\end{equation*}

where $K^{\ast \pm }=\int_{\left\langle a^{-}\left( 0\right) \right\rangle
}^{\left\langle a^{+}\left( 0\right) \right\rangle }\int_{\left\langle
a^{-}\left( 0\right) \right\rangle }^{\left\langle a^{+}\left( 0\right)
\right\rangle }K^{\pm }\left( z_{1},z_{2},0\right) dz_{1}dz_{2}$.
\end{theorem}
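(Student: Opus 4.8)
The statement asserts $\Gamma$-convergence in the topology $\tau_0$, so the proof consists of the two usual halves: a lower bound ($\liminf$ inequality) and the construction of a recovery sequence. For the lower bound, let $V_\varepsilon$ $\tau_0$-converge to $V$; one may assume $\liminf_\varepsilon F_\varepsilon(V_\varepsilon)<+\infty$ and, along a subsequence realizing this $\liminf$, that $V_\varepsilon\in\mathbf V_\varepsilon$ with $\sup_\varepsilon F_\varepsilon(V_\varepsilon)<\infty$, so that the a priori bounds of Lemma \ref{estim-vites} are available. The five groups of terms in $F_\varepsilon$ are nonnegative and supported on the essentially disjoint sets $\Omega_f^{+,\varepsilon}$, $\Omega_{h,f}^{-,\varepsilon}$, $Y_\varepsilon(\omega)$, $\Gamma_{0,\varepsilon}^+(\omega)$, $\Gamma_{h,\varepsilon}^-(\omega)$, so it suffices to bound the $\liminf$ of each separately. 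For the two Darcy terms one uses the classical homogenization of the quadratic energy $v\mapsto\int (K^\pm(x/\varepsilon))^{-1}v\cdot v$ over divergence-free fields: testing against the oscillating fields built from the cell solutions $\Phi_k^\pm$ yields $\liminf_\varepsilon \mu^+\int_{\Omega_f^{+,\varepsilon}}(K_\varepsilon^+)^{-1}V_\varepsilon\cdot V_\varepsilon\,dx\ge \mu^+\int_{\Omega^+}(\widehat K^+)^{-1}V_d^+\cdot V_d^+\,dx$ with $\widehat K^+$ as in (\ref{equ25}), and similarly over $\Omega_{h,f}^{-,\varepsilon}$ (see \cite{San}). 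For the fissure term one passes, inside each $Y_{\varepsilon,ij}(\omega)$, to the curvilinear coordinates of Section 2: by Lemmas \ref{lemma1}--\ref{lemme3} the Dirichlet density is $\bigl((q_i)^{-2}|\partial_{y_1}V_\varepsilon|^2+(q_j)^{-2}|\partial_{y_2}V_\varepsilon|^2+|\partial_t V_\varepsilon|^2\bigr)\bigl(1+O(\varepsilon^{2(1-\theta)})\bigr)$ and $dx=q_iq_j\bigl(1+O(\varepsilon^{2(1-\theta)})\bigr)\,dy_1dy_2dt$. Discarding the nonnegative $|\partial_t V_\varepsilon|^2$ part, rescaling $y_\alpha=\varepsilon\widetilde y_\alpha$, and bounding the transverse energy of each cross-section below by the minimal energy compatible with its slicewise mean — a mean that, by Lemma \ref{conv-vites-fiss} and the $\tau_0$-convergence, is controlled by $V_f$ — reduces matters to the $2$D Stokes cell problem (\ref{equ23}) for the tangential part and the scalar cell problem (\ref{equ24}) for the vertical one; summing over $(i,j)\in I_\varepsilon(\omega)$ and invoking the ergodic theorem (\ref{Ergo}) (here the stationarity of $q$ and of $r$ is used) produces the constants $\mu h\langle q^2\rangle\langle q\rangle^{-2}$ in front of $(K_f)^{-1}$ and $\mu h\langle q^2\rangle\langle 1/q^2\rangle k_0^{-1}$ in front of the vertical term. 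Finally, on the faces $\Gamma_{0,\varepsilon,ij}^+(\omega)$, $\Gamma_{h,\varepsilon,ij}^-(\omega)$ the traces of $V_\varepsilon$ converge to $(V_f)_\tau$, the $Z$-periodic matrix $(K^\pm)^{-1/2}(x/\varepsilon)$ is integrated over the random footprints, and after the ergodic average over the fissures the two Beavers--Joseph--Saffman terms tend to $\langle q^2\rangle\gamma\int_\Sigma\bigl((K^{*+})^{-1/2}+(K^{*-})^{-1/2}\bigr)(V_f)_\tau\cdot(V_f)_\tau\,dx'$. Adding the five bounds gives $\liminf_\varepsilon F_\varepsilon(V_\varepsilon)\ge F_0(V)$.

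For the recovery sequence, given $V\in\mathbf V_0$ one builds $V_\varepsilon\in\mathbf V_\varepsilon$ as follows: in $\Omega_f^{\pm,\varepsilon}$, the two-scale field $\sum_k\bigl((\widehat K^\pm)^{-1}V_d^\pm\bigr)_k(x)\,\Phi_k^\pm(x/\varepsilon)$ extended by zero to the solid pores and corrected, in the standard way, so as to be exactly divergence-free while keeping the same weak limit; in each $Y_{\varepsilon,ij}(\omega)$, the transplant through the curvilinear coordinates of the cross-section profiles built from $\eta_0$ and $\eta_k$, scaled so that the slicewise means reproduce $V_f$ and taken $x_3$-independent in the vertical component (admissible because $(V_f)_3$ has the same trace on $\Sigma\times\{0\}$ and on $\Sigma\times\{-h\}$ by the definition of $\mathbf V_0$); a boundary layer of width $O(\varepsilon)$ near $\Gamma_{0,\varepsilon}^+\cup\Gamma_{h,\varepsilon}^-$ restores the normal-velocity interface condition in (\ref{equ11}) and, since $\theta<2/3$, contributes only $o(1)$ to $F_\varepsilon$. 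Evaluating $F_\varepsilon$ on this sequence and using Lemma \ref{conv-vites-fiss}, the ergodic theorem, and the identities $\int_{Z'}|\nabla\eta_k|^2=(K_f)_{kk}$ and $\int_{Z'}|\nabla\eta_0|^2=k_0$, together with the usual div--curl argument for the Darcy part, one obtains $\limsup_\varepsilon F_\varepsilon(V_\varepsilon)\le F_0(V)$ (in fact with equality), while $V_\varepsilon$ $\tau_0$-converges to $V$ again by Lemma \ref{conv-vites-fiss}.

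The delicate point — the main obstacle — is the fissure analysis and, above all, the precise identification of the effective coefficients. On the one hand one must show that the randomly varying cross-sections, of size $\varepsilon q_i(\varepsilon^{-\theta}t)\times\varepsilon q_j(\varepsilon^{-\theta}t)$, average correctly under (\ref{Ergo}) — which forces $q$ and $r$ to be stationary and accounts for the appearance of $\langle q^2\rangle$, $\langle q\rangle^2$ and $\langle 1/q^2\rangle$. On the other hand, the Beavers--Joseph--Saffman terms do \emph{not} reduce to a naive cross-section average of $(K^\pm)^{-1/2}$ but to the matrices $(K^{*\pm})^{-1/2}$ built from the integral of $K^\pm$ over the \emph{averaged} footprint $(\langle a^-(0)\rangle,\langle a^+(0)\rangle)^2$; establishing this requires analyzing the coupling, through the slip conditions in (\ref{equ11}) and the interface conditions at $\Gamma_{0,\varepsilon}^+$, $\Gamma_{h,\varepsilon}^-$, between the Stokes flow inside the fissures and the adjacent Darcy flow, so that the relevant local problem near the faces is a combined one rather than the obvious one. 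Keeping the powers of $\varepsilon$ and the various $q$-averages consistent between the lower bound and the recovery sequence is where the real work lies; by contrast, the Darcy homogenization in $\Omega^\pm$ and the continuity of the linear source terms $\int g^\pm\cdot v$ along $\tau_0$-convergence are routine.
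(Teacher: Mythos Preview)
Your recovery-sequence construction is essentially the one the paper uses (the explicit formula is (\ref{equ26})--(\ref{fon-test})): in $\Omega_f^{\pm,\varepsilon}$ one takes $\Phi_j^\pm(x/\varepsilon)\bigl((\widehat K^\pm)^{-1}v_d^\pm\bigr)_j$, and inside each fissure one transplants $\eta_k$, $\eta_0$ after the change of variables (\ref{zq}). The paper does not introduce a boundary layer near $\Gamma_{0,\varepsilon}^+\cup\Gamma_{h,\varepsilon}^-$; it simply checks that the piecewise definition already lies in $\mathbf V_\varepsilon$ and then passes from smooth $v$ to general $v\in\mathbf V_0$ by density and the diagonalization argument of \cite[Corollary 1.18]{Att}. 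So on the $\limsup$ side you and the paper agree.

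The real divergence is in the $\liminf$ part. The paper does \emph{not} bound the fissure energy from below by a slicewise cell-problem argument. Instead it writes the subdifferential inequality
\[
\mu\varepsilon^2\!\int_{Y_\varepsilon}|\nabla v_\varepsilon^1|^2\,dx
\;\ge\;
\mu\varepsilon^2\!\int_{Y_\varepsilon}|\nabla (v_m)_\varepsilon^0|^2\,dx
+2\mu\varepsilon^2\!\int_{Y_\varepsilon}\nabla (v_m)_\varepsilon^0\cdot\nabla\bigl(v_\varepsilon^1-(v_m)_\varepsilon^0\bigr)\,dx,
\]
integrates the cross term by parts (the boundary contributions vanish because $(v_m)_\varepsilon^0$ is $x_3$-independent in each fissure and both fields vanish on $\Lambda_\varepsilon$), and is left with $-\int_{Y_\varepsilon}\Delta(v_m)_\varepsilon^0\cdot(v_\varepsilon^1-(v_m)_\varepsilon^0)$. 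Since $\Delta(v_m)_\varepsilon^0$ is, up to scaling, $\Delta_z\eta_k$ or $\Delta_z\eta_0$, this cross term is computed explicitly via (\ref{Ergo}) and shown to vanish as $\varepsilon\to0$ then $m\to\infty$. The remaining Darcy and Beavers--Joseph--Saffman terms are declared ``easy''.

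Your direct approach is in principle viable, but the step ``bound the transverse energy of each cross-section below by the minimal energy compatible with its slicewise mean --- a mean that, by Lemma \ref{conv-vites-fiss} and the $\tau_0$-convergence, is controlled by $V_f$'' is where the gap sits. The topology $\tau_0$ only gives you convergence of $\int_{Y_\varepsilon}V_\varepsilon\cdot\Phi\,dx$, i.e.\ an average simultaneously over the cross-section \emph{and} over $x_3\in(-h,0)$ \emph{and} over $(i,j)$. It does not hand you control of the cross-sectional mean at fixed $x_3$ in a fixed fissure, and lower semicontinuity of the sum $\sum_{ij}\int_{-h}^0|m_{\varepsilon,ij}(x_3)|^2\,dx_3$ under that averaged convergence is exactly what has to be proved, not assumed. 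The paper's subdifferential trick sidesteps this entirely: by testing against the recovery sequence it converts the lower bound into the computation of a single bilinear cross term, for which the $\tau_0$-convergence (a linear statement) is sufficient. That is what the subdifferential route buys you here, and why the paper chose it.
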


\begin{proof}
For the definition and the properties of the $\Gamma $-convergence, we refer
to \cite{Att} and \cite{Dal}.

\textbf{Upper }$\mathbf{\Gamma }$\textbf{-limit.} Choose any smooth $v\in 
\mathbf{C}^{1}\left( \overline{\Omega ^{+}\cup \Omega _{h}^{-}};\mathbb{R}%
^{3}\right) \cap \mathbf{V}_{0}$. We define $v\mid _{\Omega ^{+}}=:v_{d}^{+}$%
, $v\mid _{\Omega _{h}^{-}}=:v_{d}^{-}$, $v\mid _{\Sigma }=:v_{f}$ and build
in $Y_{\varepsilon ,ij}\left( \omega \right) $%
\begin{equation}
\left\{ 
\begin{array}{rcl}
\left( v_{\varepsilon ,f}^{0}\right) _{\tau } & = & \dfrac{\left( \left(
K_{f}\right) ^{-1}v_{f}\right) _{k}\left( i\varepsilon ,j\varepsilon
,0\right) }{h\left\langle q\right\rangle }\dint\nolimits_{-h}^{0}\eta
_{\varepsilon ,k,ij}\left( x\right) dx_{3}, \\ 
\left( v_{\varepsilon ,f}^{0}\right) _{3} & = & \dfrac{\left( v_{f}\right)
_{3}\left( i\varepsilon ,j\varepsilon ,0\right) }{hk_{0}}\dint%
\nolimits_{-h}^{0}\eta _{\varepsilon ,0,ij}\left( x\right) dx_{3},%
\end{array}%
\right.  \label{equ26}
\end{equation}%
where%
\begin{equation*}
\begin{array}{ccl}
\eta _{\varepsilon ,k,ij}\left( x\right) & = & \left( 
\begin{array}{c}
q_{i}\left( -\varepsilon ^{-\theta }x_{3}\right) \left( \eta _{k}\right)
_{1}\left( z\left( x_{1},x_{2},x_{3}\right) \right) \\ 
q_{j}\left( -\varepsilon ^{-\theta }x_{3}\right) \left( \eta _{k}\right)
_{2}\left( z\left( x_{1},x_{2},x_{3}\right) \right)%
\end{array}%
\right) , \\ 
\eta _{\varepsilon ,0,ij}\left( x\right) & = & \eta _{0}\left( z\left(
x_{1},x_{2},x_{3}\right) \right) ,%
\end{array}%
\end{equation*}%
with%
\begin{equation}
\begin{array}{rll}
z\left( x_{1},x_{2},x_{3}\right) & = & \left( 
\begin{array}{c}
\dfrac{x_{1}-i\varepsilon -\varepsilon \left( a_{i}^{-}\left( -\varepsilon
^{-\theta }x_{3}\right) +a_{i}^{+}\left( -\varepsilon ^{-\theta
}x_{3}\right) \right) /2}{\varepsilon q_{i}\left( -\varepsilon ^{-\theta
}x_{3}\right) } \\ 
\dfrac{x_{2}-j\varepsilon -\varepsilon \left( a_{j}^{-}\left( -\varepsilon
^{-\theta }x_{3}\right) +a_{j}^{+}\left( -\varepsilon ^{-\theta
}x_{3}\right) \right) /2}{\varepsilon q_{j}\left( -\varepsilon ^{-\theta
}x_{3}\right) }%
\end{array}%
\right) , \\ 
q_{i}\left( s\right) & = & q\left( s+\alpha _{i}\left( \omega \right)
,\omega \right) ,%
\end{array}
\label{zq}
\end{equation}%
$\eta _{k}$ being the solution of (\ref{equ23}), $\eta _{0}$ the solution of
(\ref{equ24}) and $K_{f}$ and $k_{0}$ being defined in (\ref{equ25}). We
then define the test-function $v_{\varepsilon }^{0}$ through%
\begin{equation}
\left\{ 
\begin{array}{rcll}
v_{\varepsilon }^{0} & = & \Phi _{j}^{+}\left( \dfrac{x}{\varepsilon }%
\right) \left( \left( \widehat{K}^{+}\right) ^{-1}v_{d}^{+}\right) _{j} & 
\text{in }\Omega _{f}^{+,\varepsilon }, \\ 
v_{\varepsilon }^{0} & = & \Phi _{j}^{-}\left( \dfrac{x}{\varepsilon }%
\right) \left( \left( \widehat{K}^{-}\right) ^{-1}v_{d}^{-}\right) _{j} & 
\text{in }\Omega _{h,f}^{-,\varepsilon }, \\ 
v_{\varepsilon }^{0} & = & v_{\varepsilon ,f}^{0} & \text{in }Y_{\varepsilon
}\left( \omega \right) .%
\end{array}%
\right.  \label{fon-test}
\end{equation}

One deduces from this construction that $v_{\varepsilon }^{0}\mid _{\Omega
^{+}}\in \mathbf{L}^{2}\left( \Omega _{f}^{+,\varepsilon };\mathbb{R}%
^{3}\right) $, $v_{\varepsilon }^{0}\mid _{\Omega _{h}^{-}}\in \mathbf{L}%
^{2}\left( \Omega _{h,f}^{-,\varepsilon };\mathbb{R}^{3}\right) $, $%
v_{\varepsilon }^{0}\mid _{Y_{\varepsilon }\left( \omega \right) }\in 
\mathbf{H}^{1}\left( Y_{\varepsilon }\left( \omega \right) ;\mathbb{R}%
^{3}\right) $, $v_{\varepsilon }^{0}=0$ on $\Lambda _{\varepsilon }\left(
\omega \right) $, $\func{div}\left( v_{\varepsilon }^{0}\right) =0$ in $%
\Omega _{f}^{+,\varepsilon }\cup \Omega _{h,f}^{-,\varepsilon }$, $%
v_{\varepsilon }^{0}\cdot n=0$ on $\partial \Omega _{f}^{\varepsilon }\left(
\omega \right) $ and%
\begin{equation*}
\func{div}\left( v_{\varepsilon }^{0}\right) =\dfrac{\left( \left(
K_{f}\right) ^{-1}v_{f}\right) _{k}\left( i\varepsilon ,j\varepsilon
,0\right) }{h\left\langle q\right\rangle }\dint\nolimits_{-h}^{0}\func{div}%
_{z}\left( \eta _{\varepsilon ,k,ij}\right) \left( x\right) dx_{3}=0\text{,
in }Y_{\varepsilon ,ij}\left( \omega \right) .
\end{equation*}

Therefore $v_{\varepsilon }^{0}\in \mathbf{V}_{\varepsilon }$.\ Moreover $%
v_{\varepsilon }^{0}$ is independent of $x_{3}$ in each fissure $%
Y_{\varepsilon ,ij}\left( \omega \right) $. Using the ergodic result (\ref%
{Ergo}) and making some computations, one easily proves that $\left(
v_{\varepsilon }^{0}\right) _{\varepsilon }$ $\tau _{0}$-converges to $v$.

We compute the limit $\lim_{\varepsilon \rightarrow 0}F_{\varepsilon }\left(
v_{\varepsilon }^{0}\right) $.\ We have%
\begin{equation*}
\begin{array}{ccl}
\underset{\varepsilon \rightarrow 0}{\lim }\mu \varepsilon
^{2}\dint\nolimits_{Y_{\varepsilon }\left( \omega \right) }\left\vert \nabla
v_{\varepsilon }^{0}\right\vert ^{2}dx & = & \underset{\varepsilon
\rightarrow 0}{\lim }\mu \varepsilon ^{2}\underset{\left( i,j\right) \in
I_{\varepsilon }\left( \omega \right) }{\dsum }\dint\nolimits_{Y_{%
\varepsilon ,ij}\left( \omega \right) }\left\vert \nabla \left(
v_{\varepsilon }^{0}\right) _{\tau }\right\vert ^{2}dx \\ 
&  & \quad +\underset{\varepsilon \rightarrow 0}{\lim }\mu \varepsilon ^{2}%
\underset{\left( i,j\right) \in I_{\varepsilon }\left( \omega \right) }{%
\dsum }\dint\nolimits_{Y_{\varepsilon ,ij}\left( \omega \right) }\left\vert
\nabla \left( v_{\varepsilon }^{0}\right) _{3}\right\vert ^{2}dx.%
\end{array}%
\end{equation*}

Using the expression (\ref{equ26})$_{1}$, we have%
\begin{equation*}
\begin{array}{l}
\underset{\varepsilon \rightarrow 0}{\lim }\mu \varepsilon ^{2}\underset{%
\left( i,j\right) \in I_{\varepsilon }\left( \omega \right) }{\dsum }%
\dint\nolimits_{Y_{\varepsilon ,ij}\left( \omega \right) }\left\vert \nabla
\left( v_{\varepsilon }^{0}\right) _{\tau }\right\vert ^{2}dx=\underset{%
\varepsilon \rightarrow 0}{\lim }\mu \varepsilon ^{2}\underset{\left(
i,j\right) \in I_{\varepsilon }\left( \omega \right) }{\dsum }\dfrac{1}{%
h^{2}\left\langle q\right\rangle ^{2}} \\ 
\quad \ \times \dint\nolimits_{Y_{\varepsilon ,ij}\left( \omega \right)
}\left( 
\begin{array}{l}
\left( \left( K_{f}\right) ^{-1}v_{f}\right) _{k}\left( i\varepsilon
,j\varepsilon ,0\right) \dint\nolimits_{-h}^{0}q_{i}\left( -\varepsilon
^{-\theta }x_{3}\right) \nabla _{z}\eta _{k}\left( z\left(
x_{1},x_{2},x_{3}\right) \right) dx_{3} \\ 
\quad \times \left( \left( K_{f}\right) ^{-1}v_{f}\right) _{l}\left(
i\varepsilon ,j\varepsilon ,0\right) \dint\nolimits_{-h}^{0}q_{i}\left(
-\varepsilon ^{-\theta }x_{3}\right) \nabla _{z}\eta _{l}\left( z\left(
x_{1},x_{2},x_{3}\right) \right) dx_{3}%
\end{array}%
\right) .%
\end{array}%
\end{equation*}

We introduce the change of variables $\left( z_{1},z_{2}\right) =z\left(
x_{1},x_{2},x_{3}\right) $, where $z\left( x_{1},x_{2},x_{3}\right) $ has
been defined in (\ref{zq})$_{1}$, and get%
\begin{equation*}
\begin{array}{l}
\underset{\varepsilon \rightarrow 0}{\lim }\mu \varepsilon ^{2}\underset{%
\left( i,j\right) \in I_{\varepsilon }\left( \omega \right) }{\dsum }%
\dint\nolimits_{Y_{\varepsilon ,ij}\left( \omega \right) }\left\vert \nabla
\left( v_{\varepsilon }^{0}\right) _{\tau }\right\vert ^{2}dx \\ 
\quad =\mu \underset{\varepsilon \rightarrow 0}{\lim }\underset{\left(
i,j\right) \in I_{\varepsilon }\left( \omega \right) }{\dsum }\dfrac{%
\varepsilon ^{2}}{\left\langle q\right\rangle ^{2}}\dint\nolimits_{-h}^{0}%
\dint\nolimits_{Z^{\prime }}\left( 
\begin{array}{l}
\left( \left( K_{f}\right) ^{-1}v_{f}\right) _{k}\left( i\varepsilon
,j\varepsilon ,0\right) \left( \left( K_{f}\right) ^{-1}v_{f}\right)
_{l}\left( i\varepsilon ,j\varepsilon ,0\right) \\ 
\quad \times \nabla _{z}\eta _{k}\left( z\right) \cdot \nabla _{z}\eta
_{l}\left( z\right) q_{i}\left( -\varepsilon ^{-\theta }x_{3}\right)
q_{j}\left( -\varepsilon ^{-\theta }x_{3}\right) dzdx_{3}%
\end{array}%
\right) .%
\end{array}%
\end{equation*}

Using the ergodicity result (\ref{Ergo}) and the definition (\ref{equ25})$%
_{1}$ of $K_{f}$, the above limit is equal to%
\begin{equation*}
\begin{array}{l}
\mu \dfrac{h\left\langle q^{2}\right\rangle }{\left\langle q\right\rangle
^{2}}\dint\nolimits_{\Sigma }\left( \left( \left( K_{f}\right)
^{-1}v_{f}\right) _{k}\left( \left( K_{f}\right) ^{-1}v_{f}\right)
_{l}\right) \left( x^{\prime }\right) dx^{\prime }\dint\nolimits_{Z^{\prime
}}\nabla _{z}\eta _{k}\left( z\right) \cdot \nabla _{z}\eta _{l}\left(
z\right) dz \\ 
\quad =\mu \dfrac{h\left\langle q^{2}\right\rangle }{\left\langle
q\right\rangle ^{2}}\dint\nolimits_{\Sigma }\left( K_{f}\right) ^{-1}\left(
v_{f}\right) _{\tau }\cdot \left( v_{f}\right) _{\tau }dx^{\prime },%
\end{array}%
\end{equation*}%
because $v_{f}$ is independent of $x_{3}$ in $Y_{h}$. Using a similar
argument, we have%
\begin{equation*}
\underset{\varepsilon \rightarrow 0}{\lim }\mu \varepsilon ^{2}\underset{%
\left( i,j\right) \in I_{\varepsilon }\left( \omega \right) }{\dsum }%
\dint\nolimits_{Y_{\varepsilon ,ij}\left( \omega \right) }\left\vert \nabla
\left( v_{\varepsilon }^{0}\right) _{3}\right\vert ^{2}dx=\mu h\dfrac{%
\left\langle q^{2}\right\rangle \left\langle 1/q^{2}\right\rangle }{k_{0}}%
\dint\nolimits_{\Sigma }\left( \left( v_{f}\right) _{3}\right)
^{2}dx^{\prime }.
\end{equation*}

On the other hand, observe that, for every $\psi \in \mathbf{C}%
_{c}^{1}\left( \Omega ;\mathbb{R}^{3}\right) $ we have%
\begin{equation*}
\begin{array}{rll}
\underset{\varepsilon \rightarrow 0}{\lim }\dint\nolimits_{\Omega
_{f}^{+,\varepsilon }}v_{\varepsilon }^{0}\cdot \psi dx & = & 
\dint\nolimits_{\Omega ^{+}}\left( \left( \widehat{K}^{+}\right)
^{-1}v_{d}^{+}\right) _{j}\dint_{Z^{1}}\Phi _{j}^{+}\left( z\right) dz\cdot
\psi dx \\ 
& = & \dint\nolimits_{\Omega ^{+}}v_{d}^{+}\cdot \psi dx, \\ 
\underset{\varepsilon \rightarrow 0}{\lim }\dint\nolimits_{\Gamma
_{0,\varepsilon }^{+}}K_{\varepsilon }^{+}\left( x\right) \cdot \psi \left(
x\right) dx & = & \underset{\varepsilon \rightarrow 0}{\lim }\underset{%
\left( i,j\right) \in I_{\varepsilon }\left( \omega \right) }{\dsum }%
\varepsilon ^{2}\dint_{a_{i}^{-}\left( 0\right) }^{a_{i}^{+}\left( 0\right)
}\dint_{a_{j}^{-}\left( 0\right) }^{a_{j}^{+}\left( 0\right) }K^{+}\left(
y^{\prime },0\right) dy^{\prime }\cdot \psi \left( i\varepsilon
,j\varepsilon ,0\right) \\ 
& = & \dint\nolimits_{\Sigma }\left( \dint_{\left\langle a^{-}\left(
0\right) \right\rangle }^{\left\langle a^{+}\left( 0\right) \right\rangle
}\dint_{\left\langle a^{-}\left( 0\right) \right\rangle }^{\left\langle
a^{+}\left( 0\right) \right\rangle }K^{+}\left( z_{1},z_{2},0\right)
dz_{1}dz_{2}\right) \cdot \psi \left( x^{\prime },0\right) dx^{\prime }.%
\end{array}%
\end{equation*}

We thus obtain%
\begin{equation*}
\begin{array}{l}
\underset{\varepsilon \rightarrow 0}{\lim }\left( 
\begin{array}{l}
\mu ^{+}\dint\nolimits_{\Omega ^{+}}\left( K_{\varepsilon }^{+}\right)
^{-1}v_{\varepsilon }^{0}\cdot v_{\varepsilon }^{0}dx+\mu
^{-}\dint\nolimits_{\Omega _{h}^{-}}\left( K_{\varepsilon }^{-}\right)
^{-1}v_{\varepsilon }^{0}\cdot v_{\varepsilon }^{0}dx \\ 
\quad +\gamma \dint\nolimits_{\Gamma _{0,\varepsilon }^{+}}\left(
K_{\varepsilon }^{+}\right) ^{-1/2}\left( v_{\varepsilon }^{0}\right) _{\tau
}\cdot \left( v_{\varepsilon }^{0}\right) _{\tau }dx^{\prime }+\gamma
\dint\nolimits_{\Gamma _{h,\varepsilon }^{-}}\left( K_{\varepsilon
}^{-}\right) ^{-1/2}\left( v_{\varepsilon }^{0}\right) _{\tau }\cdot \left(
v_{\varepsilon }^{0}\right) _{\tau }dx^{\prime }%
\end{array}%
\right) \\ 
\quad 
\begin{array}{cl}
= & \mu ^{+}\dint\nolimits_{\Omega ^{+}}\left( \widehat{K}^{+}\right)
^{-1}v_{d}^{+}\cdot v_{d}^{+}dx+\mu ^{-}\dint\nolimits_{\Omega
_{h}^{-}}\left( \widehat{K}^{-}\right) ^{-1}v_{d}^{-}\cdot v_{d}^{-}dx \\ 
& \quad +\left\langle q^{2}\right\rangle \gamma \dint\nolimits_{\Sigma
}\left( \left( K^{\ast +}\right) ^{-1/2}+\left( K^{\ast -}\right)
^{-1/2}\right) \left( v_{f}\right) _{\tau }\cdot \left( v_{f}\right) _{\tau
}dx^{\prime },%
\end{array}%
\end{array}%
\end{equation*}%
whence $\lim_{\varepsilon \rightarrow 0}F_{\varepsilon }\left(
v_{\varepsilon }^{0}\right) =F_{0}\left( v\right) $.

For every $v\in \mathbf{V}_{0}$, there exists a sequence $\left(
v_{m}\right) _{m}\subset \mathbf{C}^{1}\left( \overline{\Omega ^{+}\cup
\Omega _{h}^{-}};\mathbb{R}^{3}\right) \cap \mathbf{V}_{0}$ such that%
\begin{equation}
v_{m}\underset{m\rightarrow +\infty }{\rightarrow }v\text{, s-}\mathbf{L}%
^{2}\left( \Omega ^{+}\cup \Omega _{h}^{-};\mathbb{R}^{3}\right) .
\label{vm}
\end{equation}

Building the sequence $\left( \left( v_{m}\right) _{\varepsilon }^{0}\right)
_{\varepsilon }$ associated to $v_{m}$ through (\ref{fon-test}), the
sequence $\left( \left( v_{m}\right) _{\varepsilon }^{0}\right)
_{\varepsilon }$ $\tau _{0}$-converges to $v_{m}$, and using the above
computations for smooth functions, we have: $\lim_{\varepsilon \rightarrow
0}F_{\varepsilon }\left( \left( v_{m}\right) _{\varepsilon }^{0}\right)
=F_{0}\left( v_{m}\right) $. Hence $\lim_{m\rightarrow \infty
}\lim_{\varepsilon \rightarrow 0}F_{\varepsilon }\left( \left( v_{m}\right)
_{\varepsilon }^{0}\right) =F_{0}\left( v\right) $. Using the
diagonalization argument of \cite[Corollary 1.18]{Att}, there exists a
sequence $\left( v_{\varepsilon }^{0}\right) _{\varepsilon }$, $%
v_{\varepsilon }^{0}=\left( v_{m\left( \varepsilon \right) }\right)
_{\varepsilon }^{0}$ ($m\left( \varepsilon \right) \rightarrow _{\varepsilon
\rightarrow 0}+\infty $), such that $\left( v_{\varepsilon }^{0}\right)
_{\varepsilon }$ $\tau _{0}$-converges to $v$, and $\lim \sup_{\varepsilon
\rightarrow 0}F_{\varepsilon }\left( v_{\varepsilon }^{0}\right) \leq
F_{0}\left( v\right) $.

\textbf{Lower }$\mathbf{\Gamma }$\textbf{-limit.} Let $\left( v_{\varepsilon
}^{1}\right) _{\varepsilon }$ be a sequence such that $v_{\varepsilon
}^{1}\in \mathbf{V}_{\varepsilon }$ for every $\varepsilon $, and $\left(
v_{\varepsilon }^{1}\right) _{\varepsilon }$ $\tau _{0}$-converges to $v$.
We write the subdifferential inequality%
\begin{equation}
\mu \varepsilon ^{2}\dint\nolimits_{Y_{\varepsilon }\left( \omega \right)
}\left\vert \nabla v_{\varepsilon }^{1}\right\vert ^{2}dx\geq \mu
\varepsilon ^{2}\dint\nolimits_{Y_{\varepsilon }\left( \omega \right)
}\left\vert \nabla \left( v_{m}\right) _{\varepsilon }^{0}\right\vert
^{2}dx+2\mu \varepsilon ^{2}\dint\nolimits_{Y_{\varepsilon }\left( \omega
\right) }\nabla \left( v_{m}\right) _{\varepsilon }^{0}\cdot \left( \nabla
v_{\varepsilon }^{1}-\nabla \left( v_{m}\right) _{\varepsilon }^{0}\right)
dx,  \label{equ29}
\end{equation}%
where $\left( \left( v_{m}\right) _{\varepsilon }^{0}\right) _{\varepsilon }$
is the sequence associated to $v_{m}$ through (\ref{fon-test}) and where the
sequence $\left( v_{m}\right) _{m}$ satisfies the conditions (\ref{vm}).
Observe that%
\begin{equation*}
\dint\nolimits_{Y_{\varepsilon ,ij}\left( \omega \right) }\nabla \left(
v_{m}\right) _{\varepsilon }^{0}\cdot \left( \nabla v_{\varepsilon
}^{1}-\nabla \left( v_{m}\right) _{\varepsilon }^{0}\right)
dx=-\dint\nolimits_{Y_{\varepsilon ,ij}\left( \omega \right) }\Delta \left(
v_{m}\right) _{\varepsilon }^{0}\cdot \left( v_{\varepsilon }^{1}-\left(
v_{m}\right) _{\varepsilon }^{0}\right) dx,
\end{equation*}%
because $v_{\varepsilon }^{1}-\left( v_{m}\right) _{\varepsilon }^{0}=0$ on $%
\partial Y_{\varepsilon }\left( \omega \right) \backslash \left( \Gamma
_{0,\varepsilon }^{+}\cup \Gamma _{h,\varepsilon }^{-}\right) \left( \omega
\right) $, and $\frac{\partial \left( v_{m}\right) _{\varepsilon }^{0}}{%
\partial x_{3}}\mid _{x_{3}=0}=\frac{\partial \left( v_{m}\right)
_{\varepsilon }^{0}}{\partial x_{3}}\mid _{x_{3}=-h}=0$.\ Then, using the
ergodicity result (\ref{Ergo}), we have%
\begin{equation*}
\begin{array}{l}
\underset{\varepsilon \rightarrow 0}{\lim }2\mu \varepsilon
^{2}\dint\nolimits_{Y_{\varepsilon }\left( \omega \right) }\nabla \left(
v_{m}\right) _{\varepsilon }^{0}\cdot \left( \nabla v_{\varepsilon
}^{1}-\nabla \left( v_{m}\right) _{\varepsilon }^{0}\right) dx \\ 
\quad 
\begin{array}{cl}
= & -\mu \dfrac{h\left\langle q^{2}\right\rangle \left\langle
1/q^{2}\right\rangle }{\left\langle q\right\rangle }\dint\nolimits_{\Sigma
}\dint\nolimits_{Z^{\prime }}\Delta _{z}\eta _{k}\left( z\right) \left(
\left( K_{f}\right) ^{-1}\left( v_{m}\right) _{\tau }\right) _{k}\cdot
\left( v-v_{m}\right) _{\tau }\left( x^{\prime }\right) dzdx^{\prime } \\ 
& \quad -\mu \dfrac{h\left\langle q^{2}\right\rangle \left\langle
1/q^{2}\right\rangle }{k_{0}}\dint\nolimits_{\Sigma
}\dint\nolimits_{Z^{\prime }}\Delta _{z}\eta _{0}\left( z\right) \left(
v_{m}\right) _{3}\left( v-v_{m}\right) _{3}\left( x^{\prime }\right)
dzdx^{\prime },%
\end{array}%
\end{array}%
\end{equation*}%
whence%
\begin{equation*}
\underset{m\rightarrow \infty }{\lim }\underset{\varepsilon \rightarrow 0}{%
\lim }2\mu \varepsilon ^{2}\dint\nolimits_{Y_{\varepsilon }\left( \omega
\right) }\nabla \left( v_{m}\right) _{\varepsilon }^{0}\cdot \left( \nabla
v_{\varepsilon }^{1}-\nabla \left( v_{m}\right) _{\varepsilon }^{0}\right)
dx=0.
\end{equation*}

Recalling the inequality (\ref{equ29}) and the computations built in the
above case of smooth functions, we have%
\begin{equation*}
\begin{array}{l}
\underset{\varepsilon \rightarrow 0}{\lim \inf }\mu \varepsilon
^{2}\dint\nolimits_{Y_{\varepsilon }\left( \omega \right) }\left\vert \nabla
v_{\varepsilon }^{1}\right\vert ^{2}dx \\ 
\quad \geq \mu \dfrac{h\left\langle q^{2}\right\rangle }{\left\langle
q\right\rangle ^{2}}\dint\nolimits_{\Sigma }\left( K_{f}\right) ^{-1}\left(
v_{f}\right) _{\tau }\cdot \left( v_{f}\right) _{\tau }dx^{\prime }+\mu h%
\dfrac{\left\langle q^{2}\right\rangle }{k_{0}}\left\langle
1/q^{2}\right\rangle \dint\nolimits_{\Sigma }\left( \left( v_{f}\right)
_{3}\right) ^{2}dx^{\prime }.%
\end{array}%
\end{equation*}

Thus, computing in an easy way the $\lim \inf $ of the other terms in $%
F_{\varepsilon }\left( v_{\varepsilon }^{1}\right) $, we obtain: $\lim
\inf_{\varepsilon }F_{\varepsilon }\left( v_{\varepsilon }^{1}\right) \geq
F_{0}\left( v\right) $.
\end{proof}

Let us write the problem associated to the limit functional $F_{0}$.

\begin{corollary}
\label{corollary1}The solution $\left( v_{\varepsilon },p_{\varepsilon
}\right) $ of the problem (\ref{equ10})-(\ref{equ11}) verifies the following
properties:

\begin{itemize}
\item $\left( v_{\varepsilon }\right) _{\varepsilon }$ $\tau _{0}$-converges
to $v_{0}\in \mathbf{V}_{0}$, and set $v_{0}\mid _{\Omega ^{+}}=v_{0,d}^{+}$%
, $v_{0}\mid _{\Omega _{h}^{-}}=v_{0,d}^{-}$, $v_{0}\mid _{\Sigma }=v_{0,f}$.

\item $\left( p_{\varepsilon }^{+}\right) _{\varepsilon }$ converges to $%
p_{0}^{+}$, s-$L^{2}\left( \Omega ^{+}\right) $, $\left( p_{\varepsilon
}^{-}\right) _{\varepsilon }$ converges to $p_{0}^{-}$, s-$L^{2}\left(
\Omega _{h}^{-}\right) $ and (\ref{convp})$_{3}$ holds true.

\item $v_{0,d}^{+}$, $v_{0,d}^{-}$, $v_{0,f}$, $p_{0}^{+}$, $p_{0}^{-}$ and $%
\pi _{0}$ are solutions of the following problems:

\item[i)] in the regions $\Omega ^{+}$ and $\Omega _{h}^{-}$, one has the
Darcy laws%
\begin{equation}
\left\{ 
\begin{array}{rlll}
\mu ^{+}\left( \widehat{K}^{+}\right) ^{-1}v_{0,d}^{+}-\nabla p_{0}^{+} & =
& g^{+} & \text{in }\Omega ^{+}, \\ 
\func{div}\left( v_{0,d}^{+}\right) & = & 0 & \text{in }\Omega ^{+}, \\ 
\mu ^{-}\left( \widehat{K}^{-}\right) ^{-1}v_{0,d}^{-}-\nabla p_{0}^{-} & =
& g^{-} & \text{in }\Omega _{h}^{-}, \\ 
\func{div}\left( v_{0,d}^{-}\right) & = & 0 & \text{in }\Omega _{h}^{-}, \\ 
\left( v_{0,d}^{+}\right) _{3}\mid _{\Sigma \times \left\{ 0\right\} } & = & 
\left\langle q^{2}\right\rangle \left( v_{0,f}\right) _{3} & \text{on }%
\Sigma \times \left\{ 0\right\} , \\ 
\left( v_{0,d}^{-}\right) _{3}\mid _{\Sigma \times \left\{ -h\right\} } & =
& \left\langle q^{2}\right\rangle \left( v_{0,f}\right) _{3} & \Sigma \times
\left\{ -h\right\} ,%
\end{array}%
\right.  \label{Darcy}
\end{equation}

\item[ii)] on $\Sigma $, the velocity $\left( v_{0,f}\right) _{3}$ is given
through%
\begin{equation}
\left( v_{0,f}\right) _{3}\left( x^{\prime }\right) =\left( p_{0}^{+}\left(
x^{\prime },0\right) -p_{0}^{-}\left( x^{\prime },-h\right) \right) \dfrac{%
k_{0}}{\mu h\left\langle q^{2}\left( 0\right) \right\rangle \left\langle
1/q^{2}\left( 0\right) \right\rangle }  \label{v0f3}
\end{equation}%
and the tangential velocity $\left( v_{0,f}\right) _{\tau }$ satisfies the
modified Darcy law%
\begin{equation*}
\left\{ 
\begin{array}{rlll}
\dfrac{\mu }{\left\langle q\right\rangle ^{2}}\left( K_{f}\right)
^{-1}\left( v_{0,f}\right) _{\tau }+\dfrac{\gamma }{h}\left( \left( K^{\ast
+}\right) ^{-1/2}+\left( K^{\ast -}\right) ^{-1/2}\right) \left(
v_{0,f}\right) _{\tau }+\nabla \pi _{0} & = & 0 & \text{in }\Sigma , \\ 
\func{div}\left( v_{0,f}\right) _{\tau } & = & 0 & \text{in }\Sigma , \\ 
\left( v_{0,f}\right) _{\tau }\cdot n & = & 0 & \text{on }\partial \Sigma .%
\end{array}%
\right.
\end{equation*}
\end{itemize}
\end{corollary}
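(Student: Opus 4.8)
The plan is to recognise the pair $\left( v_{\varepsilon },p_{\varepsilon }\right) $ as coming from a minimisation problem and then read off the limit system from the $\Gamma $-limit functional. Indeed, by the variational formulation (\ref{equ13}), $v_{\varepsilon }$ is the unique minimiser over $\mathbf{V}_{\varepsilon }$ of
\[
J_{\varepsilon }\left( v\right) =\tfrac{1}{2}F_{\varepsilon }\left( v\right) -\dint\nolimits_{\Omega _{f}^{+,\varepsilon }}g^{+}\cdot v\,dx-\dint\nolimits_{\Omega _{h,f}^{-,\varepsilon }}g^{-}\cdot v\,dx .
\]
By Lemmas \ref{estim-vites} and \ref{estim-press} the family $\left( v_{\varepsilon }\right) _{\varepsilon }$ is bounded, so by Remark \ref{Remark1} a subsequence $\tau _{0}$-converges to some $v_{0}\in \mathbf{V}_{0}$, with $p_{\varepsilon }^{\pm }\rightarrow p_{0}^{\pm }$ strongly in $L^{2}$ and $\overline{p_{\varepsilon }}$ converging as in (\ref{convp}). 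The linear perturbation $v\mapsto \int g^{+}\cdot v_{d}^{+}+\int g^{-}\cdot v_{d}^{-}$ is continuous along $\tau _{0}$-convergent sequences (immediate from the weak-$\mathbf{L}^{2}$ convergences in Definition \ref{topology}, $g^{\pm }$ being fixed $\mathbf{L}^{2}$ functions supported away from $\Sigma $), and the functionals $F_{\varepsilon }$ are equi-coercive because of the uniform ellipticity of $K_{\varepsilon }^{\pm }$. Combining Theorem \ref{Theorem1} with the standard result on convergence of minimisers under $\Gamma $-convergence plus a continuous perturbation (see \cite{Att}, \cite{Dal}), $v_{0}$ minimises
\[
J_{0}\left( v\right) =\tfrac{1}{2}F_{0}\left( v\right) -\dint\nolimits_{\Omega ^{+}}g^{+}\cdot v_{d}^{+}\,dx-\dint\nolimits_{\Omega _{h}^{-}}g^{-}\cdot v_{d}^{-}\,dx
\]
over $\mathbf{V}_{0}$; since $F_{0}$ is a positive definite quadratic form on $\mathbf{V}_{0}$, this minimiser is unique, hence the whole sequence converges. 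This yields the first two bullet points.

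It then remains to write the Euler--Lagrange system of $J_{0}$, namely $a_{0}\left( v_{0},\Phi \right) =\int_{\Omega ^{+}}g^{+}\cdot \Phi _{d}^{+}+\int_{\Omega _{h}^{-}}g^{-}\cdot \Phi _{d}^{-}$ for all $\Phi \in \mathbf{V}_{0}$, where $a_{0}$ is the symmetric bilinear form polarising $F_{0}$. Testing first with $\Phi $ compactly supported in $\Omega ^{+}$ (resp. $\Omega _{h}^{-}$) and divergence-free, de Rham's lemma produces a pressure $p_{0}^{+}$ (resp. $p_{0}^{-}$) with $\mu ^{+}\left( \widehat{K}^{+}\right) ^{-1}v_{0,d}^{+}-\nabla p_{0}^{+}=g^{+}$ in $\Omega ^{+}$ (resp. the analogue in $\Omega _{h}^{-}$); together with $\func{div}\left( v_{0,d}^{\pm }\right) =0$ and the trace relations $\left( v_{0,d}^{+}\right) _{3}\mid _{\Sigma \times \left\{ 0\right\} }=\left( v_{0,d}^{-}\right) _{3}\mid _{\Sigma \times \left\{ -h\right\} }=\left\langle q^{2}\right\rangle \left( v_{0,f}\right) _{3}$, all obtained in Remark \ref{Remark1}, this gives exactly (\ref{Darcy}). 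For a general $\Phi \in \mathbf{V}_{0}$ one integrates the two Darcy terms of $a_{0}$ by parts, using $\func{div}\Phi _{d}^{\pm }=0$, $\Phi _{d}^{\pm }\cdot n=0$ on $\Gamma ^{\pm }$ and $\left( \Phi _{d}^{\pm }\right) _{3}\mid _{\Sigma }=\left\langle q^{2}\right\rangle \left( \Phi _{f}\right) _{3}$; all bulk contributions collapse to an identity on $\Sigma $ linking $p_{0}^{\pm }\mid _{\Sigma }$, $\left( v_{0,f}\right) _{3}$, $\left( v_{0,f}\right) _{\tau }$ with $\left( \Phi _{f}\right) _{3}$ and $\left( \Phi _{f}\right) _{\tau }$. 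Splitting the test trace into its scalar normal part and its divergence-free tangential part, varying $\left( \Phi _{f}\right) _{3}$ produces (\ref{v0f3}), while varying $\left( \Phi _{f}\right) _{\tau }$ over divergence-free fields tangent to $\partial \Sigma $ forces, by de Rham's lemma on $\Sigma $, the remaining expression to be a gradient $-\nabla \pi _{0}$, which is the modified Darcy law; the potential is then identified with the fissure-pressure limit $\pi _{0}$ of Remark \ref{Remark1} by passing to the limit in (\ref{equ13}) along the test functions built from $\Phi _{k}^{\pm }$, $\eta _{k}$, $\eta _{0}$.

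The main obstacle is the bookkeeping at the interface $\Sigma $: keeping precise track of the weights $\left\langle q^{2}\right\rangle $, $\left\langle 1/q^{2}\right\rangle $, $\left\langle q\right\rangle ^{-2}$ and of the height $h$ that enter $F_{0}$ when integrating by parts and reducing to a variational identity on $\Sigma $, and matching them against the constants in (\ref{v0f3}) and in the modified Darcy law; a second, softer point is making rigorous the identification of the a priori distinct pressures -- the Lagrange multiplier $\pi _{0}$ associated with $\func{div}\left( v_{0,f}\right) _{\tau }=0$ versus the weak limit of $\overline{p_{\varepsilon }}$. A minor subtlety worth noting is that the admissible test traces $\left( \Phi _{f}\right) _{3}$ are not arbitrary: the divergence theorem forces $\int_{\Sigma }\left( \Phi _{d}^{+}\right) _{3}\,dx^{\prime }=0$, so varying $\left( \Phi _{f}\right) _{3}$ only determines the bracket in (\ref{v0f3}) up to an additive constant; this is harmless, since $\int_{\Sigma }\left( v_{0,f}\right) _{3}\,dx^{\prime }=0$ as well (again by the divergence theorem applied to $v_{0,d}^{+}$), so the constant is absorbed into the normalisation of $p_{0}^{\pm }$.
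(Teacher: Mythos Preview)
Your approach is essentially the same as the paper's: both identify $v_{\varepsilon }$ as the minimiser of $\tfrac{1}{2}F_{\varepsilon }-\int g^{\pm }\cdot v$, invoke Theorem \ref{Theorem1} together with the standard stability of minimisers under $\Gamma $-convergence to conclude that $v_{0}$ minimises $\tfrac{1}{2}F_{0}-\int g^{\pm }\cdot v$ over $\mathbf{V}_{0}$, write the resulting Euler--Lagrange identity, and recover the pressures $p_{0}^{\pm }$ and $\pi _{0}$ as Lagrange multipliers for the divergence constraints before reading off (\ref{Darcy}), (\ref{v0f3}) and the tangential Darcy law on $\Sigma $. Your write-up is in fact more explicit than the paper's on two points the paper leaves implicit: the mean-zero constraint on $\left( \Phi _{f}\right) _{3}$ (which only determines (\ref{v0f3}) up to a constant absorbed by the pressure normalisation), and the identification of the de Rham multiplier on $\Sigma $ with the weak limit of $\overline{p_{\varepsilon }}$.
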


\begin{proof}
Thanks to the properties of the $\Gamma $-convergence, the sequence $\left(
v_{\varepsilon }\right) _{\varepsilon }$ $\tau _{0}$-converges to $v_{0}\in 
\mathbf{V}_{0}$ and $\lim_{\varepsilon \rightarrow 0}F_{\varepsilon }\left(
v_{\varepsilon }\right) =F_{0}\left( v_{0}\right) $, where $v_{0}$ is the
minimizer of the problem%
\begin{equation*}
\underset{v\in \mathbf{V}^{0}}{\inf }\left( F_{0}\left( v\right)
-2\dint\nolimits_{\Omega ^{+}}g^{+}\cdot vdx-2\dint\nolimits_{\Omega
_{h}^{-}}g^{-}\cdot vdx\right) .
\end{equation*}

For every $V\in \mathbf{V}_{0}$, we have the following identity%
\begin{equation*}
\begin{array}{l}
\mu ^{+}\dint\nolimits_{\Omega ^{+}}\left( \widehat{K}^{+}\right)
^{-1}v_{0,d}\cdot Vdx+\mu ^{-}\dint\nolimits_{\Omega _{h}^{-}}\left( 
\widehat{K}^{-}\right) ^{-1}v_{0,d}\cdot Vdx \\ 
\quad +\mu h\dfrac{\left\langle q^{2}\right\rangle }{\left\langle
q\right\rangle ^{2}}\dint\nolimits_{\Sigma }\left( K_{f}\right) ^{-1}\left(
v_{0,f}\right) _{\tau }\cdot V_{\tau }dx^{\prime }+\mu h\dfrac{\left\langle
q^{2}\right\rangle }{k_{0}}\left\langle 1/q^{2}\right\rangle
\dint\nolimits_{\Sigma }\left( v_{0,f}\right) _{3}V_{3}dx^{\prime } \\ 
\quad +\left\langle q^{2}\right\rangle \gamma \dint\nolimits_{\Sigma }\left(
\left( K^{\ast +}\right) ^{-1/2}+\left( K^{\ast -}\right) ^{-1/2}\right)
\left( v_{0,f}\right) _{\tau }\cdot V_{\tau }dx^{\prime } \\ 
\quad =\dint\nolimits_{\Omega ^{+}}g^{+}\cdot Vdx+\dint\nolimits_{\Omega
_{h}^{-}}g^{-}\cdot Vdx.%
\end{array}%
\end{equation*}

We infer the existence of a pressure $p_{0}^{+}$ (resp.\ $p_{0}^{-}$, $\pi
_{0}$) in $\Omega ^{+}$ (resp. $\Omega _{h}^{-}$, $\Sigma $) such that%
\begin{equation*}
\dint\nolimits_{\Omega ^{+}}\nabla p_{0}^{+}\cdot Vdx+\dint\nolimits_{\Omega
_{h}^{-}}\nabla p_{0}^{-}\cdot Vdx+h\dint\nolimits_{\Sigma }\nabla \pi
_{0}\cdot V_{\tau }dx^{\prime }+\mu h\dfrac{\left\langle q^{2}\right\rangle
\left\langle 1/q^{2}\right\rangle }{k_{0}}\dint\nolimits_{\Sigma }\left(
v_{0,f}\right) _{3}V_{3}dx^{\prime }=0,
\end{equation*}%
which implies, because $V$ (resp. $V_{\tau }$) is divergence-free in $\Omega
^{+}\cup \Omega _{h}^{-}$ (resp. $\Sigma $)%
\begin{equation*}
\dint\nolimits_{\Sigma }\left( -\left( p_{0}^{+}\left( x^{\prime },0\right)
-p_{0}^{-}\left( x^{\prime },-h\right) \right) +\mu h\dfrac{\left\langle
q^{2}\right\rangle \left\langle 1/q^{2}\right\rangle }{k_{0}}\left(
v_{0,f}\right) _{3}\right) V_{3}dx^{\prime }+h\dint\nolimits_{\partial
\Sigma }\pi _{0}n\cdot V_{\tau }d\sigma .
\end{equation*}

This gives the result.
\end{proof}

\section{Study of the transport problem}

Let us now consider the transport problem (\ref{eq9}).\ In this section, we
will describe the asymptotic behaviour of the solution $u_{\varepsilon }$ of
(\ref{eq9}), when $\varepsilon $ goes to 0, distinguishing between the cases 
$\mathcal{R}=0$ and $\mathcal{R}\neq 0$.

\subsection{Existence of a weak solution and a priori estimates}

We define the space%
\begin{equation*}
H_{\Gamma ^{+}\cup \Gamma ^{-}}^{1}\left( \Omega _{f}^{\varepsilon }\left(
\omega \right) \right) =\left\{ u\in H^{1}\left( \Omega _{f}^{\varepsilon
}\left( \omega \right) \right) \mid u=0\text{ on }\Gamma ^{+}\cup \Gamma
^{-}\right\} .
\end{equation*}

\begin{lemma}
\label{estim-contam}

\begin{enumerate}
\item The problem (\ref{eq9}) has a unique weak solution $u_{\varepsilon
}\in H_{\Gamma ^{+}\cup \Gamma ^{-}}^{1}\left( \Omega _{f}^{\varepsilon
}\left( \omega \right) \right) $ which is nonnegative in $\Omega
_{f}^{\varepsilon }\left( \omega \right) $.

\item There exists a non-random constant $C$ which is independent of $%
\varepsilon $ such that%
\begin{equation*}
\int_{\Omega _{f}^{\varepsilon }\left( \omega \right) }\left( u_{\varepsilon
}\right) ^{2}dx\leq C\text{ ; }\int_{\Omega _{f}^{\varepsilon }\left( \omega
\right) }\left\vert \nabla u_{\varepsilon }\right\vert ^{2}dx\leq C\text{.}
\end{equation*}

\item There exists a linear and bounded extension operator $P^{\varepsilon
}:H^{1}\left( \Omega _{f}^{\varepsilon }\left( \omega \right) \right)
\rightarrow H^{1}\left( \Omega \right) $ and two non-random positive
constants $C_{1}$ and $C_{2}$ such that%
\begin{equation*}
\begin{array}{rlll}
P^{\varepsilon }u_{\varepsilon } & = & u_{\varepsilon } & \text{in }\Omega
_{f}^{\varepsilon }\left( \omega \right) , \\ 
\dint_{\Omega }\left\vert P^{\varepsilon }u_{\varepsilon }\right\vert ^{2}dx
& \leq & C_{1}\dint_{\Omega _{f}^{\varepsilon }\left( \omega \right)
}\left\vert u_{\varepsilon }\right\vert ^{2}dx, &  \\ 
\dint_{\Omega }\left\vert \nabla P^{\varepsilon }u_{\varepsilon }\right\vert
^{2}dx & \leq & C_{2}\dint_{\Omega _{f}^{\varepsilon }\left( \omega \right)
}\left\vert \nabla u_{\varepsilon }\right\vert ^{2}dx. & 
\end{array}%
\end{equation*}
\end{enumerate}
\end{lemma}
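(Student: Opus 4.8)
The plan is to set up the variational formulation: multiply (\ref{eq9}) by $w\in H^1_{\Gamma^+\cup\Gamma^-}(\Omega_f^\varepsilon(\omega))$ and integrate by parts, the Neumann condition on $S_\varepsilon^+\cup S_\varepsilon^-\cup\Lambda_\varepsilon(\omega)$ killing the boundary term, to get the bilinear form $a_\varepsilon(u,w)=D\int_{\Omega_f^\varepsilon(\omega)}\nabla u\cdot\nabla w\,dx+\int_{\Omega_f^\varepsilon(\omega)}(v_\varepsilon\cdot\nabla u)\,w\,dx+\mathcal R\int_{\Omega_f^\varepsilon(\omega)}uw\,dx$. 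The decisive point is that the convective term is skew-symmetric: since $\operatorname{div}(v_{\varepsilon,d})=0$, $\operatorname{div}(v_{\varepsilon,s})=0$ and the normal traces match on $\Gamma_{0,\varepsilon}^+(\omega)\cup\Gamma_{h,\varepsilon}^-(\omega)$ by (\ref{equ11})$_1$, one has $\operatorname{div}(v_\varepsilon)=0$ in $\Omega_f^\varepsilon(\omega)$, while $v_\varepsilon\cdot n=0$ on $\partial\Omega_f^{+,\varepsilon}\cup\partial\Omega_{h,f}^{-,\varepsilon}\cup\Gamma$ and $v_{\varepsilon,s}=0$ on $\Lambda_\varepsilon(\omega)$; hence $\int_{\Omega_f^\varepsilon(\omega)}(v_\varepsilon\cdot\nabla u)\,u\,dx=\tfrac12\int_{\partial}(v_\varepsilon\cdot n)u^2=0$ for every $u$ vanishing on $\Gamma^+\cup\Gamma^-$. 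Thus $a_\varepsilon(u,u)=D\|\nabla u\|_{L^2}^2+\mathcal R\|u\|_{L^2}^2\ge D\|\nabla u\|_{L^2}^2$, and with a Poincaré inequality (valid since $\Gamma^+\cup\Gamma^-$ has positive surface measure) one obtains coercivity; continuity of $a_\varepsilon$ and of $w\mapsto\int f w$ is routine (use $H^1\hookrightarrow L^6$ in $\mathbb R^3$ for the convective term). Lax--Milgram then gives a unique weak solution. For nonnegativity I would test with $-u_\varepsilon^-\le0$, $u_\varepsilon^-=\max(-u_\varepsilon,0)$: the convective term vanishes by the same computation applied to $u_\varepsilon^-$, the reaction term gives $\mathcal R\|u_\varepsilon^-\|_{L^2}^2\ge0$, and since $f\ge0$ we get $D\|\nabla u_\varepsilon^-\|_{L^2}^2+\mathcal R\|u_\varepsilon^-\|_{L^2}^2\le0$, so $u_\varepsilon^-\equiv0$.

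\textbf{Part 2 (a priori estimates).} Here I would simply take $w=u_\varepsilon$: the convective term is $0$ by the skew-symmetry above, $\mathcal R\|u_\varepsilon\|_{L^2}^2\ge0$, so $D\int_{\Omega_f^\varepsilon(\omega)}|\nabla u_\varepsilon|^2\,dx\le\int_{\Omega_f^\varepsilon(\omega)}f\,u_\varepsilon\,dx\le\|f\|_{L^2(\Omega^+)}\|u_\varepsilon\|_{L^2(\Omega_f^\varepsilon(\omega))}$, recalling $\operatorname{supp}(f)\subset\Omega^+$ with $f\in C(\Omega^+)$ bounded. One then needs a Poincaré inequality $\|u_\varepsilon\|_{L^2(\Omega_f^\varepsilon(\omega))}\le C_P\|\nabla u_\varepsilon\|_{L^2(\Omega_f^\varepsilon(\omega))}$ with $C_P$ independent of $\varepsilon$; I would deduce it from the extension operator of part 3 (extend $u_\varepsilon$ to $\Omega$, apply the classical Poincaré inequality on the fixed domain, use the $\varepsilon$-uniform bound of the extension), or directly by integrating $|u_\varepsilon|^2$ along the $x_3$-segments issuing from $\Gamma^+$, resp. $\Gamma^-$. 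This yields $\|\nabla u_\varepsilon\|_{L^2}\le C\|f\|_{L^2}$ and then $\|u_\varepsilon\|_{L^2}\le C$, uniformly in $\varepsilon$.

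\textbf{Part 3 (extension operator).} I would use the classical construction for periodically perforated media (Cioranescu--Saint Jean Paulin, Acerbi--Chiad\`o Piat--Dal Maso--Percivale). On the reference cell, $Z^1$ is open, connected and separated from $Z^2$ by the smooth surface $S$, so there is a bounded linear extension $H^1(Z^1)\to H^1(Z)$ controlling both the $L^2$ norm and the gradient norm by $\varepsilon$-free constants; rescaling to $\varepsilon Z^1\subset\varepsilon Z$ leaves these constants unchanged (the dilation Jacobians cancel in the seminorm), and patching over the cells meeting $\Omega^+$ and over those meeting $\Omega_h^-$ produces operators $H^1(\Omega_f^{+,\varepsilon})\to H^1(\Omega^+)$ and $H^1(\Omega_{h,f}^{-,\varepsilon})\to H^1(\Omega_h^-)$ with $\varepsilon$-independent norm. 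In the intermediate layer $Y_h$ one extends from the fissures $Y_\varepsilon(\omega)$ cell by cell: after the change of variables of Section~2, which straightens each $Y_{\varepsilon,ij}(\omega)$ into a (mildly curved) cylinder whose cross-section $\{a_i^-<z_1<a_i^+,\ a_j^-<z_2<a_j^+\}$ has side lengths in $[c_1,c_2]$ and walls of slope $O(\varepsilon^{1-\theta})$ by (\ref{equ5})--(\ref{random}), a reference-configuration extension with bound uniform in $\varepsilon$ and in $\omega$ exists; the three pieces are then glued along $\Gamma_0^+$ and $\Gamma_h^-$, where the traces coincide, giving $P^\varepsilon u_\varepsilon\in H^1(\Omega)$ with $P^\varepsilon u_\varepsilon=u_\varepsilon$ on $\Omega_f^\varepsilon(\omega)$ and the stated bounds.

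\textbf{Main obstacle.} The variational existence and the energy estimate are routine once the skew-symmetry of the convective term is exploited; the real work is the extension operator across the \emph{random, thin} fissured layer $Y_h$, i.e. checking that its reference-cell extension constant is bounded independently of both $\varepsilon$ and $\omega$. This is precisely where the almost-sure bounds (\ref{equ5}), (\ref{random}) and the geometric preparation of Section~2 enter, and once it is in hand the uniform Poincaré inequality used in part 2 comes for free.
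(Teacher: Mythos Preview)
Your proposal is correct and follows essentially the same route as the paper: variational formulation with the skew-symmetry of the convective term (from $\operatorname{div}(v_\varepsilon)=0$ and the vanishing normal traces) for Part~1, the energy identity with $w=u_\varepsilon$ plus a uniform Poincar\'e inequality for Part~2, and the Acerbi--Chiad\`o Piat--Dal Maso--Percivale extension for Part~3.

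Two minor differences worth noting. First, in Part~2 the paper splits into cases: when $\mathcal R>0$ the reaction term already controls $\|u_\varepsilon\|_{L^2}$ directly via Cauchy--Schwarz, so no Poincar\'e is needed; only when $\mathcal R=0$ does it invoke a uniform Poincar\'e inequality, citing \cite[Lemma~3.4]{All} rather than deriving it from the extension operator as you do (your route is of course also valid). Second, for Part~3 the paper is far more terse than you: it simply writes ``This is a particular case of the result given in \cite{Acerb}'' and does not discuss the extension across the random fissured layer $Y_h$ at all. Your identification of this as the genuine technical point, and your outline of how the almost-sure bounds (\ref{equ5})--(\ref{random}) and the coordinates of Section~2 make the reference-cell constants uniform in $\varepsilon$ and $\omega$, is more careful than what the paper actually provides.
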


\begin{proof}
1. Using the standard variational methods, one proves that the problem (\ref%
{eq9}) has a unique solution $u_{\varepsilon }\in H_{\Gamma ^{+}\cup \Gamma
^{-}}^{1}\left( \Omega _{f}^{\varepsilon }\left( \omega \right) \right) $.
Multiplying (\ref{eq9}) par $\left( u_{\varepsilon }\right) ^{-}=\min \left(
0,u_{\varepsilon }\right) $ and using Green's formula, one has%
\begin{equation*}
\begin{array}{l}
D\dint_{\Omega _{f}^{\varepsilon }\left( \omega \right) }\left\vert \nabla
\left( u_{\varepsilon }\right) ^{-}\right\vert ^{2}dx+\dint_{\Omega
_{f}^{\varepsilon }\left( \omega \right) }\left( v_{\varepsilon }\cdot
\nabla \left( u_{\varepsilon }\right) ^{-}\right) \left( u_{\varepsilon
}\right) ^{-}dx+\mathcal{R}\dint_{\Omega _{f}^{\varepsilon }\left( \omega
\right) }\left( \left( u_{\varepsilon }\right) ^{-}\right) ^{2}dx \\ 
\quad =\dint_{\Omega _{f}^{+,\varepsilon }}f\left( u_{\varepsilon }\right)
^{-}dx\leq 0,%
\end{array}%
\end{equation*}%
because $f$ is nonnegative.\ Because $\func{div}\left( v_{\varepsilon
}\right) =0$ in$\ \Omega _{f}^{\varepsilon }\left( \omega \right) $ and $%
v_{\varepsilon }\cdot n=0$ on $\partial \Omega _{f}^{\varepsilon }\left(
\omega \right) $, one has%
\begin{equation*}
\int_{\Omega _{f}^{\varepsilon }\left( \omega \right) }\left( v_{\varepsilon
}\cdot \nabla u_{\varepsilon }\right) u_{\varepsilon }dx=-\int_{\Omega
_{f}^{\varepsilon }\left( \omega \right) }\left( v_{\varepsilon }\cdot
\nabla u_{\varepsilon }\right) u_{\varepsilon }dx=0.
\end{equation*}

We deduce that $\int_{\Omega _{f}^{\varepsilon }\left( \omega \right)
}\left\vert \nabla \left( u_{\varepsilon }\right) ^{-}\right\vert ^{2}dx+%
\mathcal{R}\int_{\Omega _{f}^{\varepsilon }\left( \omega \right) }\left(
\left( u_{\varepsilon }\right) ^{-}\right) ^{2}dx\leq 0$, thus $\left(
u_{\varepsilon }\right) ^{-}=0$, in $\Omega _{f}^{\varepsilon }\left( \omega
\right) $, hence $u_{\varepsilon }$ is nonnegative in $\Omega
_{f}^{\varepsilon }\left( \omega \right) $.

2. As already observed, we have%
\begin{equation*}
D\int_{\Omega _{f}^{\varepsilon }\left( \omega \right) }\left\vert \nabla
u_{\varepsilon }\right\vert ^{2}dx+\mathcal{R}\int_{\Omega _{f}^{\varepsilon
}\left( \omega \right) }\left( u_{\varepsilon }\right) ^{2}dx=\int_{\Omega
_{f}^{+,\varepsilon }}fu_{\varepsilon }dx.
\end{equation*}

For $\mathcal{R}\neq 0$, one deduces from this equality, using
Cauchy-Schwarz' inequality, that $\int_{\Omega _{f}^{\varepsilon }\left(
\omega \right) }\left( u_{\varepsilon }\right) ^{2}dx\leq C$ and $%
\int_{\Omega _{f}^{\varepsilon }\left( \omega \right) }\left\vert \nabla
u_{\varepsilon }\right\vert ^{2}dx\leq C$.

In the case $\mathcal{R}=0$, one can prove, using \cite[Lemma 3.4]{All},
that there exists a non-random constant $C$ independent of $\varepsilon $
such that%
\begin{equation*}
\int_{\Omega _{f}^{\varepsilon }\left( \omega \right) }\left( u_{\varepsilon
}\right) ^{2}dx\leq C\int_{\Omega _{f}^{\varepsilon }\left( \omega \right)
}\left\vert \nabla u_{\varepsilon }\right\vert ^{2}dx.
\end{equation*}

Thus, using the above equality, we get the desired estimates.

3. This is a particular case of the result given in \cite{Acerb}.
\end{proof}

We will still denote by $u_{\varepsilon }$ its extension $P^{\varepsilon
}u_{\varepsilon }$ to the whole $\Omega $. From Lemmas \ref{estim-contam}
and \ref{conv-vites-fiss}, we deduce the existence of $u_{0}^{+}\in
H_{\Gamma ^{+}}^{1}\left( \Omega ^{+}\right) $ and $u_{0}^{-}\in H_{\Gamma
^{-}}^{1}\left( \Omega _{h}^{-}\right) $, such that, up to some subsequence%
\begin{equation}
\left\{ 
\begin{array}{rcll}
u_{\varepsilon }\mid _{\Omega ^{+}} & \underset{\varepsilon \rightarrow 0}{%
\rightharpoonup } & u_{0}^{+} & \text{w-}H_{\Gamma ^{+}}^{1}\left( \Omega
^{+}\right) , \\ 
u_{\varepsilon }\mid _{\Omega _{h}^{-}} & \underset{\varepsilon \rightarrow 0%
}{\rightharpoonup } & u_{0}^{-} & \text{w-}H_{\Gamma ^{-}}^{1}\left( \Omega
_{h}^{-}\right) , \\ 
\underset{\varepsilon \rightarrow 0}{\lim }\dint\nolimits_{Y_{\varepsilon
}\left( \omega \right) }u_{\varepsilon }\varphi dx & = & h\left\langle
q^{2}\left( 0\right) \right\rangle \dint\nolimits_{\Sigma }u_{0}^{+}\left(
x^{\prime },0\right) \varphi \left( x^{\prime }\right) dx^{\prime } & 
\forall \varphi \in C_{0}\left( \Sigma \right) .%
\end{array}%
\right.  \label{conv-solut}
\end{equation}

We intend to describe the problems satisfied by $u_{0}^{+}$ and $u_{0}^{-}$,
in their respective domains. We now define the notion of convergence
associated to sequences satisfying the above convergences.

\begin{definition}
A sequence $\left( U_{\varepsilon }\right) _{\varepsilon }$, with $%
U_{\varepsilon }\in H_{\Gamma ^{+}\cup \Gamma ^{-}}^{1}\left( \Omega
_{f}^{\varepsilon }\left( \omega \right) \right) $ for every $\varepsilon $, 
$\tau _{1}$-converges to $U$, with $U\mid _{\Omega ^{+}}=:U^{+}\in H_{\Gamma
^{+}}^{1}\left( \Omega ^{+}\right) $ and $U\mid _{\Omega _{h}^{-}}=:U^{-}\in
H_{\Gamma ^{-}}^{1}\left( \Omega _{h}^{-}\right) $, if the convergences (\ref%
{conv-solut}) are satisfied, replacing $u_{\varepsilon }$ by $U_{\varepsilon
}$.
\end{definition}

\subsection{The asymptotic behaviour in the case $\mathcal{R}=0$}

In this subsection, we deal with the case $\mathcal{R}=0$. Using the
boundary conditions (\ref{eq9})$_{2,3}$, we consider the variational
formulation of the problem (\ref{eq9})%
\begin{equation*}
\forall u\in H_{\Gamma ^{+}\cup \Gamma ^{-}}^{1}\left( \Omega
_{f}^{\varepsilon }\left( \omega \right) \right) :D\dint_{\Omega
_{f}^{\varepsilon }\left( \omega \right) }\nabla u_{\varepsilon }\cdot
\nabla udx+\dint_{\Omega _{f}^{\varepsilon }\left( \omega \right) }\left(
v_{\varepsilon }\cdot \nabla u_{\varepsilon }\right) udx=\dint_{\Omega
_{f}^{+,\varepsilon }}fudx,
\end{equation*}%
where $v_{\varepsilon }$ is the velocity of the fluid flow, that is the
solution of (\ref{equ10})-(\ref{equ11}). We consider the $Z$-periodic
solution $b_{j}$ of the cell problem%
\begin{equation}
\left\{ 
\begin{array}{rllll}
\Delta b_{j} & = & 0 & \text{in }Z^{1} & j=1,2,3, \\ 
\left( \nabla b_{j}+e_{j}\right) \cdot n & = & 0 & \text{on }S & 
\end{array}%
\right.  \label{cel-pb}
\end{equation}%
and the problem%
\begin{equation*}
\left\{ 
\begin{array}{rllll}
\Delta c_{m} & = & 0 & \text{in }Z^{\prime } & m=1,2, \\ 
\left( \nabla c_{m}+e_{m}\right) \cdot n & = & 0 & \text{on }\partial
Z^{\prime }. & 
\end{array}%
\right.
\end{equation*}

We define the tensors $\widehat{D}$ and $D^{\ast }$ through%
\begin{equation*}
\left\{ 
\begin{array}{rll}
\widehat{D}_{ij} & = & D\left( \left\vert Z^{1}\right\vert \delta
_{ij}+\dint_{Z^{1}}\dfrac{\partial b_{j}}{\partial z_{i}}dz\right) , \\ 
D_{ml}^{\ast } & = & D\left( \delta _{ml}+\dint_{Z^{\prime }}\dfrac{\partial
c_{l}}{\partial z_{m}}dz^{\prime }\right) .%
\end{array}%
\right.
\end{equation*}

Let $\chi _{+}^{\varepsilon }$ (resp. $\chi _{-}^{\varepsilon }$) be the
characteristic function of $\Omega _{f}^{+,\varepsilon }\left( \omega
\right) $ (resp. $\Omega _{h,f}^{-,\varepsilon }\left( \omega \right) $). We
have the following result.

\begin{lemma}
\label{coef-dif}One has, up to some subsequence:

\begin{enumerate}
\item $\chi _{+}^{\varepsilon }D\nabla u_{\varepsilon }\rightharpoonup
_{\varepsilon \rightarrow 0}\widehat{D}\nabla u_{0}^{+}$,\ in $\mathbf{L}%
^{2}\left( \Omega ^{+};\mathbb{R}^{3}\right) $-weak,

\item $\chi _{-}^{\varepsilon }D\nabla u_{\varepsilon }\rightharpoonup
_{\varepsilon \rightarrow 0}\widehat{D}\nabla u_{0}^{-}$,\ in $\mathbf{L}%
^{2}\left( \Omega _{h}^{-};\mathbb{R}^{3}\right) $-weak,

\item $\lim_{\varepsilon \rightarrow 0}\int\nolimits_{Y_{\varepsilon }\left(
\omega \right) }D\nabla _{\tau }u_{\varepsilon }\cdot \varphi
dx=h\left\langle q^{2}\left( 0\right) \right\rangle \int\nolimits_{\Sigma
}D^{\ast }\nabla _{\tau }u_{0}^{+}\left( x^{\prime },0\right) \cdot \varphi
dx^{\prime }$, $\forall \varphi \in \mathbf{C}_{c}^{\infty }\left( \Sigma ;%
\mathbb{R}^{2}\right) $, where $\nabla _{\tau }u_{0}^{+}=\left( \frac{%
\partial u_{0}^{+}}{\partial x_{1}},\frac{\partial u_{0}^{+}}{\partial x_{2}}%
\right) $.
\end{enumerate}
\end{lemma}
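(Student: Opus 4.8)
This is a homogenization‑of‑the‑flux statement, and the natural tool is the energy method (Tartar–Murat oscillating test functions). First I would fix a subsequence. By Lemma~\ref{estim-contam} the extended functions $u_\varepsilon=P^\varepsilon u_\varepsilon$ are bounded in $H^1(\Omega)$, so up to a subsequence $u_\varepsilon\rightharpoonup u_0^+$ in $H^1(\Omega^+)$, $u_\varepsilon\rightharpoonup u_0^-$ in $H^1(\Omega_h^-)$, and by Rellich $u_\varepsilon\to u_0^{\pm}$ strongly in $L^2$; moreover the zero‑extended fluxes $\chi_{\pm}^\varepsilon D\nabla u_\varepsilon$ are bounded in $\mathbf L^2$, so along a further subsequence $\chi_+^\varepsilon D\nabla u_\varepsilon\rightharpoonup\Xi^+$ in $\mathbf L^2(\Omega^+;\mathbb R^3)$ and $\chi_-^\varepsilon D\nabla u_\varepsilon\rightharpoonup\Xi^-$ in $\mathbf L^2(\Omega_h^-;\mathbb R^3)$. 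Everything then reduces to identifying $\Xi^{\pm}=\widehat D\nabla u_0^{\pm}$ and to computing the fissure limit in (3).

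\textbf{Parts (1) and (2).} For $j=1,2,3$ let $w_j^\varepsilon(x)=x_j+\varepsilon\,b_j(x/\varepsilon)$, with $b_j$ the $Z$‑periodic solution of (\ref{cel-pb}) extended $Z$‑periodically, so $w_j^\varepsilon\rightharpoonup x_j$ in $H^1$ and $w_j^\varepsilon\to x_j$ in $L^2$ while staying bounded in $L^\infty$; and set $\sigma_j^\varepsilon:=\chi_+^\varepsilon D\bigl(e_j+\nabla b_j(x/\varepsilon)\bigr)$. Because $\Delta b_j=0$ in $Z^1$ and $(\nabla b_j+e_j)\cdot n=0$ on $S$, the field $\sigma_j^\varepsilon$ is divergence‑free in $\mathcal D'(\Omega^+)$, and $\sigma_j^\varepsilon\rightharpoonup\widehat D e_j$ (the $j$‑th column of $\widehat D$) in $\mathbf L^2$, by the very definition of $\widehat D$. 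Now fix $\phi\in C_c^\infty(\Omega^+)$ and, using that $D$ is symmetric, write $\int_{\Omega^+}\phi\,\sigma_j^\varepsilon\cdot\nabla u_\varepsilon\,dx=\int_{\Omega^+}\phi\,(\chi_+^\varepsilon D\nabla u_\varepsilon)\cdot\nabla w_j^\varepsilon\,dx$. Integrating by parts on the left and using $\func{div}\sigma_j^\varepsilon=0$ gives $-\int u_\varepsilon\,\sigma_j^\varepsilon\cdot\nabla\phi\,dx\to\int_{\Omega^+}(\widehat D\nabla u_0^+)_j\,\phi\,dx$ (strong $L^2$ against weak $L^2$). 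Integrating by parts on the right gives $-\int\nabla\phi\cdot(\chi_+^\varepsilon D\nabla u_\varepsilon)\,w_j^\varepsilon\,dx-\bigl\langle\func{div}(\chi_+^\varepsilon D\nabla u_\varepsilon),\phi w_j^\varepsilon\bigr\rangle$; the first term tends to $-\int_{\Omega^+}\nabla\phi\cdot\Xi^+\,x_j\,dx$, and for the second one uses the equation (\ref{eq9}) with $\mathcal R=0$ together with the Neumann conditions on $S_\varepsilon^{+}$ and $\Lambda_\varepsilon(\omega)$ to write $\func{div}(\chi_+^\varepsilon D\nabla u_\varepsilon)=\chi_+^\varepsilon(v_\varepsilon\cdot\nabla u_\varepsilon-f)$ in $\mathcal D'(\Omega^+)$, then rewrites $\chi_+^\varepsilon v_\varepsilon\cdot\nabla u_\varepsilon=\func{div}(\chi_+^\varepsilon u_\varepsilon v_\varepsilon)$ thanks to $\func{div}v_\varepsilon=0$. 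Passing to the limit in both expressions and equating them, then letting $\phi$ and $j$ vary, yields $\Xi^+=\widehat D\nabla u_0^+$; part (2) is obtained verbatim with $\Omega^+$ replaced by $\Omega_h^-$ (the cell $Z^1$ and the corrector $b_j$ being the same) and $u_0^+$ by $u_0^-$. Since the limit is uniquely determined, the full sequence converges.

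\textbf{Part (3).} In each fissure $Y_{\varepsilon,ij}(\omega)$, $(i,j)\in I_\varepsilon(\omega)$, I would pass to the curvilinear coordinates of Section~2; by Lemma~\ref{lemme3} and the formulas following it the tangential gradient is, up to an $O(\varepsilon^{2(1-\theta)})$ correction, $\bigl(\tfrac1{q_i}\partial_{y_1}u_\varepsilon,\tfrac1{q_j}\partial_{y_2}u_\varepsilon\bigr)$, and the cross‑section, after dilation by $\varepsilon q_i(\varepsilon^{-\theta}t)$ and $\varepsilon q_j(\varepsilon^{-\theta}t)$, becomes the fixed square $Z'$. One builds, as in Theorem~\ref{Theorem1}, the oscillating test functions attached to $Z'$ and to the scalar problems defining the $c_m$, using the Neumann condition on $\Lambda_\varepsilon(\omega)$; the same two‑divergence‑structures computation as in parts (1)–(2) (the convective term of (\ref{eq9}) contributing only in the $t$‑direction at leading order) identifies the tangential limit flux on each rescaled cell with $D^{\ast}\nabla_\tau$ of the cross‑sectional limit of $u_\varepsilon$, which by (\ref{conv-solut})$_3$ is the trace $u_0^+(x',0)$. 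Finally the ergodic theorem (\ref{Ergo}) — used exactly as in Lemma~\ref{conv-vites-fiss}(1) and in Theorem~\ref{Theorem1} — turns the sum over $(i,j)\in I_\varepsilon(\omega)$ of the rescaled integrals into $h\langle q^2(0)\rangle\int_\Sigma(\cdot)(x',0)\,dx'$, giving the stated formula, almost surely.

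\textbf{Main obstacle.} The delicate step in (1)–(2) is the passage to the limit in the convective term: since $v_\varepsilon$ is only bounded in $\mathbf L^2$, the product $v_\varepsilon\cdot\nabla u_\varepsilon$ is merely bounded in $L^1$, and after the rewriting $\func{div}(\chi_+^\varepsilon u_\varepsilon v_\varepsilon)$ one faces the product $\chi_+^\varepsilon u_\varepsilon v_\varepsilon\cdot\nabla w_j^\varepsilon$ of two only weakly $\mathbf L^2$‑convergent sequences. This is controlled by (i) the uniform $L^\infty$ bound on $u_\varepsilon$ coming from the maximum principle — the convective term has divergence form and vanishing normal flux on the solid walls, so it does not enter the constant — which puts $\chi_+^\varepsilon u_\varepsilon v_\varepsilon$ back in a bounded subset of $\mathbf L^2$, and (ii) the cancellation furnished by $\func{div}v_\varepsilon=0$: the cell‑corrector part $\nabla b_j(x/\varepsilon)$ of $\nabla w_j^\varepsilon$ drops out because the limiting velocity carries no incompatible microscopic oscillation (most transparent via two‑scale convergence, where $\int_Z v_0(x,y)\cdot\nabla_y b_j(y)\,dy=0$ for the divergence‑free periodic profile $v_0$). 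In part (3) the analogous difficulty is to keep the $O(\varepsilon^{2(1-\theta)})$ metric errors under control against the $\varepsilon$‑scaled gradient and to organise the ergodic averaging over the random widths $q_i,q_j$, but that is the mechanism already in place for Lemma~\ref{conv-vites-fiss} and Theorem~\ref{Theorem1}.
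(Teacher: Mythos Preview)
Your overall strategy---Tartar's oscillating test functions built from the cell correctors $b_j$---is exactly the one the paper uses. The difference is in how you organise the computation, and that difference creates the obstacle you flag.

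You test with $\phi w_j^\varepsilon$ where $w_j^\varepsilon=x_j+\varepsilon b_j(x/\varepsilon)$, integrate by parts, and are then forced to pass to the limit in $\langle\func{div}(\chi_+^\varepsilon D\nabla u_\varepsilon),\phi w_j^\varepsilon\rangle$, i.e.\ in $\int\chi_+^\varepsilon(v_\varepsilon\cdot\nabla u_\varepsilon-f)\phi w_j^\varepsilon\,dx$. Because $w_j^\varepsilon$ is $O(1)$, the convective contribution survives at leading order and you end up with the product $\chi_+^\varepsilon u_\varepsilon v_\varepsilon\cdot\nabla b_j(x/\varepsilon)$ of two merely weakly convergent $L^2$ sequences. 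Your proposed fixes do not close this: the uniform $L^\infty$ bound on $u_\varepsilon$ is not established anywhere in the paper (Lemma~\ref{estim-contam} gives only nonnegativity and $H^1$ bounds), and even granting it, the two--scale cancellation you invoke for the corrector part is asserted rather than proved in the present framework.

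The paper avoids this entirely by testing the equation not with $\varphi w_j^\varepsilon$ but with $\varphi b_j^\varepsilon=\varphi\,\varepsilon b_j(x/\varepsilon)$, which is $O(\varepsilon)$ in $L^\infty$. Since $v_\varepsilon\cdot\nabla u_\varepsilon$ is bounded in $L^1$ (Cauchy--Schwarz from the $L^2$ bounds on $v_\varepsilon$ and $\nabla u_\varepsilon$), the convective term, the source term, and the term $\int\chi_+^\varepsilon D\nabla u_\varepsilon\,b_j^\varepsilon\cdot\nabla\varphi$ all vanish in the limit for free, yielding directly
\[
\lim_{\varepsilon\to0}\int_{\Omega^+}\chi_+^\varepsilon D\nabla u_\varepsilon\cdot\nabla b_j^\varepsilon\,\varphi\,dx=0.
\]
One then uses, separately, the distributional identity $\int_{\Omega^+}\chi_+^\varepsilon D\nabla(\varphi u_\varepsilon)\cdot(\nabla b_j^\varepsilon+e_j)\,dx=0$ coming from the cell problem (your $\func{div}\sigma_j^\varepsilon=0$), expands, and combines with the line above to identify the $e_j$ component of the weak limit of $\chi_+^\varepsilon D\nabla u_\varepsilon$. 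No $L^\infty$ bound, no two--scale convergence, no compensated compactness beyond ``strong $L^2$ against weak $L^2$'' is needed. In short: split $\nabla w_j^\varepsilon=e_j+\nabla b_j^\varepsilon$ \emph{before} integrating by parts and handle the $\nabla b_j^\varepsilon$ piece by multiplying the PDE by the small factor $b_j^\varepsilon$; the obstacle you identified then never arises. For part (3) the paper proceeds analogously with the corrector $c_m^\varepsilon(x)=\varepsilon\langle q(0)\rangle\,c_m(z(x))$ scaled to the random cross--section, again $O(\varepsilon)$, so the same shortcut applies there as well.
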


\begin{proof}
1. Let $\varphi \in C_{c}^{\infty }\left( \Omega ^{+}\right) $, $%
b_{j}^{\varepsilon }=\varepsilon b_{j}\left( x/\varepsilon \right) $.
Multiplying (\ref{eq9}) (for $\mathcal{R}=0$) by $\chi _{+}^{\varepsilon
}b_{j}^{\varepsilon }\varphi $, we get%
\begin{equation*}
\dint_{\Omega ^{+}}\chi _{+}^{\varepsilon }D\nabla u_{\varepsilon }\cdot
\left( \varphi \nabla b_{j}^{\varepsilon }+b_{j}^{\varepsilon }\nabla
\varphi \right) dx+\dint_{\Omega ^{+}}\chi _{+}^{\varepsilon }v_{\varepsilon
}\cdot \nabla u_{\varepsilon }b_{j}^{\varepsilon }\varphi dx=\dint_{\Omega
^{+}}\chi _{+}^{\varepsilon }fb_{j}^{\varepsilon }\varphi dx,
\end{equation*}%
from which we deduce that%
\begin{equation}
\underset{\varepsilon \rightarrow 0}{\lim }\dint_{\Omega ^{+}}\chi
_{+}^{\varepsilon }D\nabla u_{\varepsilon }\varphi \cdot \nabla
b_{j}^{\varepsilon }dx=0.  \label{grad1}
\end{equation}

Observe now that, through (\ref{cel-pb})%
\begin{equation*}
\begin{array}{rcl}
\dint_{\Omega ^{+}}\chi _{+}^{\varepsilon }D\nabla \left( \varphi
u_{\varepsilon }\right) \cdot \left( \nabla b_{j}^{\varepsilon
}+e_{j}\right) dx & = & D\dint_{\Omega ^{+}}\chi _{+}^{\varepsilon }\left(
\nabla b_{j}^{\varepsilon }+e_{j}\right) \varphi \cdot \nabla u_{\varepsilon
}dx \\ 
&  & \quad +D\dint_{\Omega ^{+}}\chi _{+}^{\varepsilon }\left( \nabla
b_{j}^{\varepsilon }+e_{j}\right) u_{\varepsilon }\cdot \nabla \varphi dx \\ 
& = & 0.%
\end{array}%
\end{equation*}

Thus, taking into account (\ref{grad1}), one has, up to some subsequence%
\begin{equation*}
\begin{array}{rll}
\underset{\varepsilon \rightarrow 0}{\lim }\dint_{\Omega ^{+}}\chi
_{+}^{\varepsilon }D\varphi \left( e_{j}\cdot \nabla u_{\varepsilon }\right)
dx & = & -\underset{\varepsilon \rightarrow 0}{\lim }D\dint_{\Omega
^{+}}\chi _{+}^{\varepsilon }\left( \nabla b_{j}^{\varepsilon }+e_{j}\right)
u_{\varepsilon }\cdot \nabla \varphi dx \\ 
& = & -D\underset{i=1}{\overset{3}{\dsum }}\left( \left\vert
Z^{1}\right\vert \delta _{ij}+\dint_{Z^{1}}\dfrac{\partial b_{j}}{\partial
z_{i}}dz\right) \dint_{\Omega ^{+}}\dfrac{\partial \varphi }{\partial x_{i}}%
u_{0}^{+}dx \\ 
& = & D\underset{i=1}{\overset{3}{\dsum }}\left( \left\vert Z^{1}\right\vert
\delta _{ij}+\dint_{Z^{1}}\dfrac{\partial b_{j}}{\partial z_{i}}dz\right)
\dint_{\Omega ^{+}}\dfrac{\partial u_{0}^{+}}{\partial x_{i}}\varphi dx.%
\end{array}%
\end{equation*}

2. In a similar way than above, we get, for every $\varphi \in C_{c}^{\infty
}\left( \Omega _{h}^{-}\right) $%
\begin{equation*}
\underset{\varepsilon \rightarrow 0}{\lim }\dint_{\Omega _{h}^{-}}\chi
_{-}^{\varepsilon }D\varphi \left( e_{j}\cdot \nabla u_{\varepsilon }\right)
dx=D\underset{i=1}{\overset{3}{\dsum }}\left( \left\vert Z^{1}\right\vert
\delta _{ij}+\dint_{Z^{1}}\dfrac{\partial b_{j}}{\partial z_{i}}dz\right)
\dint_{\Omega _{h}^{-}}\frac{\partial u_{0}^{-}}{\partial x_{i}}\varphi dx.
\end{equation*}

3. We define the quantity $c_{m}^{\varepsilon }$, $m=1,2$, through%
\begin{equation*}
c_{m}^{\varepsilon }\left( x\right) =\varepsilon \left\langle q\left(
0\right) \right\rangle c_{m}\left( 
\begin{array}{c}
\dfrac{x_{1}-i\varepsilon -\varepsilon \left( a_{i}^{-}\left( -\varepsilon
^{-\theta }x_{3}\right) +a_{i}^{+}\left( -\varepsilon ^{-\theta
}x_{3}\right) \right) /2}{\varepsilon q_{i}\left( -\varepsilon ^{-\theta
}x_{3}\right) } \\ 
\dfrac{x_{2}-j\varepsilon -\varepsilon \left( a_{j}^{-}\left( -\varepsilon
^{-\theta }x_{3}\right) +a_{j}^{+}\left( -\varepsilon ^{-\theta
}x_{3}\right) \right) /2}{\varepsilon q_{j}\left( -\varepsilon ^{-\theta
}x_{3}\right) }%
\end{array}%
\right) ,
\end{equation*}

Then, using curvilinear coordinates, the ergodic result (\ref{Ergo}) and
making some computations as before, we get the result.
\end{proof}

Our main result in this subsection reads as follows.

\begin{theorem}
\label{theorem2}The sequence $\left( u_{\varepsilon }\right) _{\varepsilon }$%
, where $u_{\varepsilon }$ is the solution of (\ref{eq9}), converges in the
topology $\tau _{1}$ to the solution $\left( u_{0}^{+},u_{0}^{-}\right) $ of
the variational formulation%
\begin{equation}
\begin{array}{l}
\forall \left( u^{+},u^{-}\right) \in H_{\Gamma ^{+}}^{1}\left( \Omega
^{+}\right) \times H_{\Gamma ^{-}}^{1}\left( \Omega _{h}^{-}\right)
:\dint_{\Omega ^{+}}\widehat{D}\nabla u_{0}^{+}\cdot \nabla
u^{+}dx+\dint_{\Omega _{h}^{-}}\widehat{D}\nabla u_{0}^{-}\cdot \nabla
u^{-}dx \\ 
\quad +\dint\nolimits_{\Omega ^{+}}\left( v_{0,d}^{+}\cdot \nabla
u_{0}^{+}\right) u^{+}dx+\dint\nolimits_{\Omega _{h}^{-}}\left(
v_{0,d}^{-}\cdot \nabla u_{0}^{-}\right) u^{-}dx \\ 
\quad +h\left\langle q^{2}\left( 0\right) \right\rangle
\dint\nolimits_{\Sigma }D^{\ast }\nabla _{\tau }u_{0}^{+}\cdot \nabla _{\tau
}u^{+}dx^{\prime }-h\left\langle q^{2}\left( 0\right) \right\rangle
\dint\nolimits_{\Sigma }\left( \left( v_{0}\right) _{\tau }\cdot \nabla
_{\tau }u^{+}\right) u_{0}^{+}dx^{\prime } \\ 
\quad +\dfrac{D}{h\left\langle 1/q^{2}\right\rangle }\dint\nolimits_{\Sigma
}\left( u_{0}^{+}\left( u^{-}-u^{+}\right) -u_{0}^{-}\left(
u^{-}-u^{+}\right) \exp \left( \dfrac{p_{0}^{+}-p_{0}^{-}}{D\langle
q^{2}\rangle \left\langle 1/q^{2}\right\rangle }\dfrac{k_{0}}{\mu }\right)
\right) dx^{\prime } \\ 
\qquad =\left\vert Z^{1}\right\vert \dint_{\Omega ^{+}}fu^{+}dx,%
\end{array}
\label{varform}
\end{equation}%
where $v_{0,d}^{+}$ and $v_{0,d}^{-}$ are the limit velocities appearing in
Remark \ref{Remark1} and $p_{0}^{+}$ and $p_{0}^{-}$ are the pressures
appearing in Corollary \ref{corollary1}.
\end{theorem}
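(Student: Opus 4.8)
The plan is to use the energy (oscillating test function) method. I would test the weak formulation of (\ref{eq9}) with $\mathcal R=0$,
\begin{equation*}
D\int_{\Omega_f^\varepsilon(\omega)}\nabla u_\varepsilon\cdot\nabla\Phi_\varepsilon\,dx+\int_{\Omega_f^\varepsilon(\omega)}\left(v_\varepsilon\cdot\nabla u_\varepsilon\right)\Phi_\varepsilon\,dx=\int_{\Omega_f^{+,\varepsilon}}f\Phi_\varepsilon\,dx,
\end{equation*}
against a well-prepared $\Phi_\varepsilon\in H^1_{\Gamma^+\cup\Gamma^-}(\Omega_f^\varepsilon(\omega))$ attached to a smooth pair $(u^+,u^-)$ with $u^\pm=0$ on $\Gamma^\pm$: namely $\Phi_\varepsilon=u^+$ in $\Omega_f^{+,\varepsilon}$, $\Phi_\varepsilon=u^-$ in $\Omega_{h,f}^{-,\varepsilon}$, and in each fissure $Y_{\varepsilon,ij}(\omega)$ a one-dimensional profile in $t=-x_3$ (a quasi-solution of the local vertical adjoint problem) taking the values $u^+(x_1,x_2,0)$ at $t=0$ and $u^-(x_1,x_2,-h)$ at $t=h$. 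The compactness needed to start is already available: Lemma \ref{estim-contam} gives the $H^1$-bounds and the extension operator $P^\varepsilon$, hence, along a subsequence, the convergences (\ref{conv-solut}) together with $u_\varepsilon\to u_0^\pm$ strongly in $L^2$ of $\Omega^\pm$ by Rellich; Lemma \ref{coef-dif} identifies the limits of the diffusive fluxes, and Remark \ref{Remark1} and Corollary \ref{corollary1} the limits of $v_\varepsilon$ and of the pressures.

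Next I would pass to the limit term by term. The diffusion over $\Omega_f^{\pm,\varepsilon}$ converges, by Lemma \ref{coef-dif}(1)--(2) (the test gradients $\nabla u^\pm$ being fixed), to $\int_{\Omega^+}\widehat D\nabla u_0^+\cdot\nabla u^+\,dx+\int_{\Omega_h^-}\widehat D\nabla u_0^-\cdot\nabla u^-\,dx$, and the tangential part of the diffusion over $Y_\varepsilon(\omega)$ converges, by Lemma \ref{coef-dif}(3), to $h\langle q^2(0)\rangle\int_\Sigma D^\ast\nabla_\tau u_0^+\cdot\nabla_\tau u^+\,dx'$. For the convection over $\Omega_f^{\pm,\varepsilon}$ I would use $\func{div}(v_\varepsilon)=0$ to rewrite it as $-\int u_\varepsilon v_\varepsilon\cdot\nabla u^\pm\,dx$ plus interface integrals over $\Gamma_{0,\varepsilon}^+$ and $\Gamma_{h,\varepsilon}^-$; the volume part passes to the limit pairing the strong $L^2$-convergence of $u_\varepsilon$ with the weak convergence of $v_\varepsilon$, the interface integrals are controlled through Remark \ref{Remark1}(4), and, after recombination and use of the Darcy relation $\left(v_{0,d}^\pm\right)_3=\langle q^2\rangle\left(v_{0,f}\right)_3$ of (\ref{Darcy}), one recovers $\int_{\Omega^+}\left(v_{0,d}^+\cdot\nabla u_0^+\right)u^+\,dx+\int_{\Omega_h^-}\left(v_{0,d}^-\cdot\nabla u_0^-\right)u^-\,dx$; the tangential convection inside the fissures likewise yields $-h\langle q^2(0)\rangle\int_\Sigma\left(\left(v_0\right)_\tau\cdot\nabla_\tau u^+\right)u_0^+\,dx'$ after an integration by parts using that $\left(v_{0,f}\right)_\tau$ is divergence-free with vanishing normal trace on $\partial\Sigma$. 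Finally $\int_{\Omega_f^{+,\varepsilon}}f\Phi_\varepsilon\,dx\to|Z^1|\int_{\Omega^+}fu^+\,dx$, since $f$ is supported in $\Omega^+$ and $\Phi_\varepsilon\to u^+$ strongly there.

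The heart of the matter — and the step I expect to be hardest — is the analysis inside the fissures of the vertical diffusion $D\int_{Y_\varepsilon(\omega)}\partial_t u_\varepsilon\,\partial_t\Phi_\varepsilon\,dx$, the vertical convection $\int_{Y_\varepsilon(\omega)}\left(v_\varepsilon\right)_3\,\partial_t u_\varepsilon\,\Phi_\varepsilon\,dx$, and the interface integrals on $\Gamma_{0,\varepsilon}^+$ and $\Gamma_{h,\varepsilon}^-$ that remain. Here one uses crucially that $f\equiv0$ in $Y_h$ and the local expression $\Delta u\simeq\frac{1}{q_iq_j}\partial_t\big(q_iq_j\partial_t u\big)+(\text{tangential})$ of the Laplacian established in Section 2: integrating by parts in $t$ against $\Phi_\varepsilon$, $u_\varepsilon$ is governed inside $Y_{\varepsilon,ij}(\omega)$, to leading order, by a one-dimensional advection--diffusion two-point boundary value problem on $(0,h)$ with the fast-oscillating coefficient $q^2(\varepsilon^{-\theta}t)$; homogenizing this coefficient produces the harmonic mean $\langle 1/q^2\rangle^{-1}$ as the effective vertical diffusivity, the drift being fixed by the vertical velocity $\left(v_{0,f}\right)_3$ of Corollary \ref{corollary1} given in (\ref{v0f3}). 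The explicit solution of this limit problem is affine in the exponential $\exp\!\left(\dfrac{p_0^+-p_0^-}{D\langle q^2\rangle\langle 1/q^2\rangle}\dfrac{k_0}{\mu}\right)$ of (\ref{varform}), and reading off its vertical flux at $t=0$ and $t=h$ — the interface contributions from the porous sides cancelling those from the fissure side — produces precisely the transmission term $\dfrac{D}{h\langle 1/q^2\rangle}\int_\Sigma\big(u_0^+(u^--u^+)-u_0^-(u^--u^+)\exp(\cdots)\big)dx'$. The bookkeeping of these several $\Sigma$-contributions, together with the internal homogenization of $q^2(\varepsilon^{-\theta}t)$, is where the care is needed.

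Assembling all the limits gives (\ref{varform}) for smooth $(u^+,u^-)$; a density argument in $H^1_{\Gamma^+}(\Omega^+)\times H^1_{\Gamma^-}(\Omega_h^-)$ extends it to arbitrary admissible test pairs. Since (\ref{varform}) is uniquely solvable — the principal diffusion part is coercive, the divergence-free convection terms are skew after integration by parts, and the transmission term is dealt with by testing the difference of two solutions with a pair weighted by the positive factor $\exp(-\cdots)$ — the limit does not depend on the subsequence, so the whole family $(u_\varepsilon)_\varepsilon$ $\tau_1$-converges to $(u_0^+,u_0^-)$.
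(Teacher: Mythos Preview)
Your proposal is correct and follows essentially the same route as the paper: test the variational formulation against a well-prepared function equal to $u^\pm$ in the porous parts and a one-dimensional profile in each fissure, pass to the limit region by region using Lemma~\ref{coef-dif} for the diffusion and the velocity convergences for the convection, and extract the transmission term from the boundary contributions of an integration by parts in $x_3$ inside the fissures. The paper's realization of your ``quasi-solution of the local vertical adjoint problem'' is the explicit function $\bar u_{\varepsilon,ij}$ of (\ref{uepsijf}), built as the \emph{exact} solution of $-D\partial_{x_3}\!\big(q_iq_j\,\partial_{x_3}\bar u\big)-q_iq_j\,(v_{\varepsilon,ij})_3\,\partial_{x_3}\bar u=0$ with the \emph{oscillating} coefficient $q_iq_j(-\varepsilon^{-\theta}x_3)$ and a \emph{frozen} constant drift $(v_{\varepsilon,ij})_3$ taken from the already-computed limit pressures via (\ref{veloci}); this choice makes the fissure bulk term cancel after integration by parts (up to the mismatch $(v_{\varepsilon,ij})_3-(v_{\varepsilon,s})_3$, which vanishes in the limit), so that only the two boundary fluxes survive, and the ergodic theorem applied to the explicit denominator in (\ref{uepsijf}) produces the factor $h\langle 1/q^2\rangle$ and the exponential in $p_0^+-p_0^-$. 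For the convection over $\Omega^\pm$ the paper keeps the form $\int(v_\varepsilon\cdot\nabla u_\varepsilon)u^\pm$ and cites compensated compactness instead of your strong--weak pairing after integration by parts; either argument works.
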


Before starting the proof of Theorem \ref{theorem2}, let us introduce the
constant "vertical" velocity $\left( v_{\varepsilon ,ij}\right) _{3}$ in the
fissure $Y_{\varepsilon ,ij}\left( \omega \right) $, $\left( i,j\right) \in
I_{\varepsilon }\left( \omega \right) $, defined as%
\begin{equation}
\left( v_{\varepsilon ,ij}\right) _{3}=\left( p_{0,\varepsilon
,ij}^{+}-p_{0,\varepsilon ,ij}^{-}\right) \dfrac{k_{0}}{\mu h\left\langle
q^{2}\right\rangle \left\langle 1/q^{2}\right\rangle },  \label{veloci}
\end{equation}%
where $p_{0,\varepsilon ,ij}^{+}=p_{0}^{+}\left( i\varepsilon ,j\varepsilon
,0\right) $, $p_{0,\varepsilon ,ij}^{-}=p_{0}^{-}\left( i\varepsilon
,j\varepsilon ,-h\right) $, $p_{0}^{+}$ and $p_{0}^{-}$ being the pressures
defined in Corollary \ref{corollary1} (compare to (\ref{v0f3})). Inside the
fissure $Y_{\varepsilon ,ij}\left( \omega \right) $, for every $\left(
i,j\right) \in I_{\varepsilon }\left( \omega \right) $, we define, for every 
$u\in C^{2}\left( \overline{\Omega }\right) $ satisfying $u=0$ on $\Gamma
=\Gamma ^{+}\cup \Gamma ^{-}$%
\begin{equation}
\begin{array}{l}
\bar{u}_{\varepsilon ,ij}\left( x_{1},x_{2},x_{3}\right) =u^{+}\left(
x_{1},x_{2},0\right) \\ 
\quad +\dfrac{\left( u^{-}\left( x_{1},x_{2},-h\right) -u^{+}\left(
x_{1},x_{2},0\right) \right) \dint\nolimits_{x_{3}}^{0}\dfrac{1}{q_{i}\left(
-\varepsilon ^{-\theta }t\right) q_{j}\left( -\varepsilon ^{-\theta
}t\right) }\exp \left( -\dfrac{t\left( v_{\varepsilon ,ij}\right) _{3}}{D}%
\right) dt}{\dint\nolimits_{-h}^{0}\dfrac{1}{q_{i}\left( -\varepsilon
^{-\theta }t\right) q_{j}\left( -\varepsilon ^{-\theta }t\right) }\exp
\left( -\dfrac{t\left( v_{\varepsilon ,ij}\right) _{3}}{D}\right) dt},%
\end{array}
\label{uepsijf}
\end{equation}%
where $q_{i}\left( s\right) $ is defined in (\ref{zq})$_{2}$. We finally
define the test-function $u_{0,\varepsilon }$ through%
\begin{equation}
u_{0,\varepsilon }\left( x\right) =\left\{ 
\begin{array}{ll}
u^{+}\left( x\right) & \text{in }\Omega ^{+}, \\ 
\bar{u}_{\varepsilon ,ij}\left( x\right) & \text{in }Y_{\varepsilon
,ij}\left( \omega \right) \text{, }\forall \left( i,j\right) \in
I_{\varepsilon }\left( \omega \right) , \\ 
u^{-}\left( x\right) & \text{in }\Omega _{h}^{-}.%
\end{array}%
\right.  \label{equ42}
\end{equation}

The properties of this test-function are gathered in the following result.

\begin{lemma}
\label{proposition1}

\begin{enumerate}
\item One has:%
\begin{equation}
\left\{ 
\begin{array}{rlll}
-D\dfrac{\partial }{\partial x_{3}}\left( q_{i}q_{j}\left( -\varepsilon
^{-\theta }x_{3}\right) \dfrac{\partial \bar{u}_{\varepsilon ,ij}}{\partial
x_{3}}\right) \left( x\right) \quad &  &  &  \\ 
-\left( q_{i}q_{j}\right) \left( -\varepsilon ^{-\theta }x_{3}\right) \left(
v_{\varepsilon ,ij}\right) _{3}\dfrac{\partial \bar{u}_{\varepsilon ,ij}}{%
\partial x_{3}}\left( x\right) & = & 0 & \text{in }Y_{\varepsilon ,ij}\left(
\omega \right) , \\ 
\bar{u}_{\varepsilon ,ij}\left( x_{1},x_{2},0\right) & = & u^{+}\left(
x_{1},x_{2},0\right) , & \text{on }\Gamma _{0,\varepsilon ,ij}^{+}\left(
\omega \right) , \\ 
\bar{u}_{\varepsilon ,ij}\left( x_{1},x_{2},-h\right) & = & u^{-}\left(
x_{1},x_{2},-h\right) & \text{on }\Gamma _{h,\varepsilon ,ij}^{-}\left(
\omega \right) .%
\end{array}%
\right.  \label{equaub}
\end{equation}

\item For every $\varepsilon >0$, $u_{0,\varepsilon }\in H_{\Gamma ^{+}\cup
\Gamma ^{-}}^{1}\left( \Omega _{\varepsilon }\left( \omega \right) \right) $.

\item The sequence $\left( u_{0,\varepsilon }\right) _{\varepsilon }$ $\tau
_{1}$-converges to $u$.
\end{enumerate}
\end{lemma}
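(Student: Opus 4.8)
The three assertions are essentially independent, and I would establish them in order.

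\textit{Assertion (1)} is a one–variable computation. Write, in each fissure, $\bar u_{\varepsilon,ij}=u^{+}(x_1,x_2,0)+\big(u^{-}(x_1,x_2,-h)-u^{+}(x_1,x_2,0)\big)\,\Theta_{\varepsilon,ij}(x_3)$, where $\Theta_{\varepsilon,ij}(x_3)$ denotes the quotient of the two integrals occurring in (\ref{uepsijf}); since $u^{+}(x_1,x_2,0)$ and $u^{-}(x_1,x_2,-h)$ carry no $x_3$, only $\Theta_{\varepsilon,ij}$ is $x_3$–dependent. Differentiating $\Theta_{\varepsilon,ij}$ by the fundamental theorem of calculus, $q_i q_j(-\varepsilon^{-\theta}x_3)\,\partial_{x_3}\bar u_{\varepsilon,ij}$ equals an $x_3$–independent constant times $\exp\big(-x_3(v_{\varepsilon,ij})_3/D\big)$; a second differentiation multiplies this by $-(v_{\varepsilon,ij})_3/D$, which is exactly the equation (\ref{equaub})$_{1}$. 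The boundary identities (\ref{equaub})$_{2,3}$ follow at once from $\Theta_{\varepsilon,ij}(0)=0$ and $\Theta_{\varepsilon,ij}(-h)=1$.

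\textit{Assertion (2).} Since $u=0$ on $\Gamma^{+}\cup\Gamma^{-}$ and $u_{0,\varepsilon}$ coincides with $u^{\pm}$ on all of $\Omega_{f}^{\pm,\varepsilon}$, its trace vanishes on $\Gamma^{+}\cup\Gamma^{-}$. On each of $\Omega_{f}^{+,\varepsilon}$, $\Omega_{h,f}^{-,\varepsilon}$ and every $Y_{\varepsilon,ij}(\omega)$ the function $u_{0,\varepsilon}$ is $C^{1}$ — in a fissure because $q\ge c_{1}>0$ and the bounds (\ref{equ5}) on $q,r$ keep $\Theta_{\varepsilon,ij}$ and $\partial_{x_3}\Theta_{\varepsilon,ij}$ bounded, while $u^{\pm}\in C^{2}(\overline{\Omega})$ govern the $(x_1,x_2)$–dependence — hence $H^{1}$ on each piece. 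The only internal interfaces of $\Omega_{f}^{\varepsilon}(\omega)$ are $\Gamma_{0,\varepsilon}^{+}$ and $\Gamma_{h,\varepsilon}^{-}$, where the one–sided traces agree by (\ref{equaub})$_{2,3}$; gluing gives $u_{0,\varepsilon}\in H_{\Gamma^{+}\cup\Gamma^{-}}^{1}(\Omega_{f}^{\varepsilon}(\omega))$.

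\textit{Assertion (3).} The first two convergences in (\ref{conv-solut}) (with $u_{\varepsilon},u_{0}^{\pm}$ replaced by $u_{0,\varepsilon},u^{\pm}$) are trivial, since $u_{0,\varepsilon}\mid_{\Omega^{+}}=u^{+}$ and $u_{0,\varepsilon}\mid_{\Omega_{h}^{-}}=u^{-}$ do not depend on $\varepsilon$. For the third I would reuse the splitting of Assertion (1) and the elementary bound $0\le\Theta_{\varepsilon,ij}(x_3)\le C|x_3|$, valid uniformly in $\varepsilon$ (the integrand in (\ref{uepsijf}) being bounded above and below away from $0$ by (\ref{equ5}), and $(v_{\varepsilon,ij})_3$ uniformly bounded). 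Then $\int_{Y_{\varepsilon}(\omega)}u^{+}(x_1,x_2,0)\,\varphi\,dx\to h\langle q^{2}(0)\rangle\int_{\Sigma}u^{+}(x',0)\varphi(x')\,dx'$ by the first part of Lemma \ref{conv-vites-fiss} applied to the $C^{1}$, $x_3$–free function $(x_1,x_2)\mapsto u^{+}(x_1,x_2,0)\varphi(x_1,x_2)$, whereas the remaining term is dominated by $C\int_{Y_{\varepsilon}(\omega)}|x_3|\,|\varphi|\,dx$, which tends to $0$ by the same part of Lemma \ref{conv-vites-fiss} applied to the continuous function $|x_3|\,|\varphi(x_1,x_2)|$, which vanishes on $\Sigma\times\{0\}$. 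Summing, $\int_{Y_{\varepsilon}(\omega)}u_{0,\varepsilon}\varphi\,dx\to h\langle q^{2}(0)\rangle\int_{\Sigma}u^{+}(x',0)\varphi(x')\,dx'$, so $u_{0,\varepsilon}$ $\tau_{1}$–converges to $u$.

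The only non–mechanical step is Assertion (3): the key is that $u_{0,\varepsilon}-u^{+}(x_1,x_2,0)$ vanishes on $\Gamma_{0,\varepsilon}^{+}$ at rate $|x_3|$, so that the concentration of the fissure mass on $\Sigma\times\{0\}$ recorded in Lemma \ref{conv-vites-fiss} annihilates the interpolation correction in the limit. Assertions (1) and (2) are routine; a uniform bound $\sup_{\varepsilon}\int_{Y_{\varepsilon}(\omega)}(u_{0,\varepsilon})^{2}\,dx<+\infty$, should it be needed later, is immediate from $0\le\Theta_{\varepsilon,ij}\le1$ and $u\in C^{2}(\overline{\Omega})$.
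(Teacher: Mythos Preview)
Your proof is correct and, for Assertions~(1) and~(2), essentially identical to the paper's: both are dismissed there as ``immediate consequences'' of the explicit formula~(\ref{uepsijf}) and of the matching traces~(\ref{equaub})$_{2,3}$ at the fissure ends, exactly as you argue.

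For Assertion~(3) you take a slightly cleaner route than the paper. The paper verifies the fissure convergence by reopening the curvilinear--coordinate computation of Lemma~\ref{conv-vites-fiss}: it writes out $A_\varepsilon=\int_{Y_\varepsilon(\omega)}\varphi\,\Theta_{\varepsilon,ij}\,dx$ in local coordinates, applies the ergodic theorem~(\ref{Ergo}) by hand, and observes that the resulting expression tends to~$0$ because the inner integral has shrinking effective length. You instead extract the uniform pointwise bound $0\le\Theta_{\varepsilon,ij}(x_3)\le C|x_3|$ and feed the majorant $|x_3|\,|\varphi(x')|$ back into Lemma~\ref{conv-vites-fiss}, whose limit vanishes since the majorant vanishes on $\Sigma\times\{0\}$. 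This reuses the already--proved lemma rather than redoing its proof, at the mild cost of needing Lemma~\ref{conv-vites-fiss} for continuous (not just $C_0^1$) integrands, which is harmless by density or by inspection of that lemma's proof. Both arguments rest on the same phenomenon: the interpolation correction $\Theta_{\varepsilon,ij}$ is $O(|x_3|)$ near $x_3=0$, and the fissure measure concentrates there.
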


\begin{proof}
1. This is an immediate consequence of the definition (\ref{uepsijf}) of $%
\bar{u}_{\varepsilon ,ij}$.

2. This is an immediate consequence of the construction (\ref{equ42}) of $%
u_{0,\varepsilon }$, in $\Omega ^{+}$ and in $\Omega _{h}^{-}$, and through
the "boundary conditions" (\ref{equaub})$_{2,3}$ satisfied by $%
u_{0,\varepsilon }$ at the ends of the fissure $Y_{\varepsilon ,ij}\left(
\omega \right) $.

3. From this construction, we deduce that $\left( u_{0,\varepsilon }\mid
_{\Omega ^{+}}\right) _{\varepsilon }$ (resp. $\left( u_{0,\varepsilon }\mid
_{\Omega _{h}^{-}}\right) _{\varepsilon }$) converges to $u^{+}$ (resp. $%
u^{-}$) in $H^{1}\left( \Omega ^{+}\right) $-strong (resp. $H^{1}\left(
\Omega _{h}^{-}\right) $-strong).

Moreover, for every $\varphi \in C_{0}\left( 
%TCIMACRO{\U{211d} }%
%BeginExpansion
\mathbb{R}
%EndExpansion
^{3}\right) $, we define%
\begin{equation*}
A_{\varepsilon }=\dint\nolimits_{Y_{\varepsilon }\left( \omega \right)
}\varphi \dfrac{\dint\nolimits_{x_{3}}^{0}\dfrac{1}{q_{i}\left( -\varepsilon
^{-\theta }t\right) q_{j}\left( -\varepsilon ^{-\theta }t\right) }\exp
\left( -\dfrac{t\left( v_{\varepsilon ,ij}\right) _{3}}{D}\right) dt}{%
\dint\nolimits_{-h}^{0}\dfrac{1}{q_{i}\left( -\varepsilon ^{-\theta
}t\right) q_{j}\left( -\varepsilon ^{-\theta }t\right) }\exp \left( -\dfrac{%
t\left( v_{\varepsilon ,ij}\right) _{3}}{D}\right) dt}dx.
\end{equation*}

One has%
\begin{equation*}
\begin{array}{lll}
\underset{\varepsilon \rightarrow 0}{\lim }A_{\varepsilon } & = & \underset{%
\varepsilon \rightarrow 0}{\lim }\underset{\left( i,j\right) \in
I_{\varepsilon }\left( \omega \right) }{\dsum }\dfrac{1}{\dint%
\nolimits_{-h}^{0}\dfrac{1}{q_{i}\left( -\varepsilon ^{-\theta }t\right)
q_{j}\left( -\varepsilon ^{-\theta }t\right) }\exp \left( -\dfrac{t\left(
v_{\varepsilon ,ij}\right) _{3}}{D}\right) dt} \\ 
&  & \quad \times \dint\nolimits_{-h}^{0}\varphi \left( \varepsilon
i,\varepsilon j,x_{3}\right) \left( 
\begin{array}{l}
\dint\nolimits_{x_{3}}^{0}\dfrac{1}{q_{i}\left( -\varepsilon ^{-\theta
}t\right) q_{j}\left( -\varepsilon ^{-\theta }t\right) } \\ 
\quad \times \exp \left( -\dfrac{t\left( v_{\varepsilon ,ij}\right) _{3}}{D}%
\right) dt%
\end{array}%
\right) q^{2}\left( -\varepsilon ^{-\theta }x_{3}\right) dx_{3}%
\end{array}%
\end{equation*}%
and%
\begin{equation*}
\begin{array}{lll}
\underset{\varepsilon \rightarrow 0}{\lim }A_{\varepsilon } & = & \underset{%
\varepsilon \rightarrow 0}{\lim }\underset{\left( i,j\right) \in
I_{\varepsilon }\left( \omega \right) }{\dsum }\dfrac{1}{\dint%
\nolimits_{-h}^{0}\dfrac{1}{q_{i}\left( -\varepsilon ^{-\theta }t\right)
q_{j}\left( -\varepsilon ^{-\theta }t\right) }\exp \left( -\dfrac{t\left(
v_{\varepsilon ,ij}\right) _{3}}{D}\right) dt} \\ 
&  & \quad \times \varepsilon ^{\theta }\dint\nolimits_{-h\varepsilon
^{-\theta }}^{0}\varphi \left( \varepsilon i,\varepsilon j,\varepsilon
^{\theta }\xi \right) \left( 
\begin{array}{l}
\dint\nolimits_{\varepsilon ^{\theta }\xi }^{0}\dfrac{1}{q_{i}\left(
-\varepsilon ^{-\theta }t\right) q_{j}\left( -\varepsilon ^{-\theta
}t\right) } \\ 
\quad \times \exp \left( -\dfrac{t\left( v_{\varepsilon ,ij}\right) _{3}}{D}%
\right) dt%
\end{array}%
\right) q^{2}\left( \xi \right) d\xi \\ 
& = & 0.%
\end{array}%
\end{equation*}

Thus%
\begin{equation*}
\begin{array}{l}
\underset{\varepsilon \rightarrow 0}{\lim }\dint\nolimits_{Y_{\varepsilon
}\left( \omega \right) }\varphi \dfrac{u^{-}\left( x_{1},x_{2},-h\right)
-u^{+}\left( x_{1},x_{2},0\right) }{\dint\nolimits_{-h}^{0}\dfrac{1}{%
q_{i}\left( -\varepsilon ^{-\theta }t\right) q_{j}\left( -\varepsilon
^{-\theta }t\right) }\exp \left( -\dfrac{t\left( v_{\varepsilon ,ij}\right)
_{3}}{D}\right) dt} \\ 
\quad \times \dint\nolimits_{x_{3}}^{0}\dfrac{1}{q_{i}\left( -\varepsilon
^{-\theta }t\right) q_{j}\left( -\varepsilon ^{-\theta }t\right) }\exp
\left( -\dfrac{t\left( v_{\varepsilon ,ij}\right) _{3}}{D}\right) dtdx=0,%
\end{array}%
\end{equation*}%
from which we deduce that%
\begin{equation*}
\underset{\varepsilon \rightarrow 0}{\lim }\dint\nolimits_{Y_{\varepsilon
}\left( \omega \right) }\varphi u_{0,\varepsilon }dx=h\left\langle
q^{2}\left( 0\right) \right\rangle \dint\nolimits_{\Sigma }u^{+}\left(
x^{\prime },0\right) \varphi \left( x^{\prime },0\right) dx^{\prime }\text{.}
\end{equation*}

Thus the sequence $\left( u_{0,\varepsilon }\right) _{\varepsilon }$ $\tau
_{1}$-converges to $u$.
\end{proof}

\begin{proof}[Proof of Theorem \protect\ref{theorem2}]
Thanks to the boundary conditions (\ref{equ10})$_{4,7}$, we have%
\begin{equation*}
\dint\nolimits_{\Omega _{f}^{\varepsilon }\left( \omega \right) }\left(
v_{\varepsilon }\cdot \nabla u_{\varepsilon }\right) u_{0,\varepsilon
}dx=-\dint\nolimits_{\Omega _{f}^{\varepsilon }\left( \omega \right) }\left(
v_{\varepsilon }\cdot \nabla u_{0,\varepsilon }\right) u_{\varepsilon }dx.
\end{equation*}

Using Lemma \ref{proposition1} and the "compensated compactness" result (see 
\cite{Mur}), we immediately deduce the following limits%
\begin{equation*}
\begin{array}{rll}
\underset{\varepsilon \rightarrow 0}{\lim }\dint\nolimits_{\Omega
^{+}}\left( v_{\varepsilon }\cdot \nabla u_{\varepsilon }\right)
u_{0,\varepsilon }dx & = & \dint\nolimits_{\Omega ^{+}}\left(
v_{0,d}^{+}\cdot \nabla u_{0}^{+}\right) u^{+}dx, \\ 
\underset{\varepsilon \rightarrow 0}{\lim }\dint\nolimits_{\Omega
_{h}^{-}}\left( v_{\varepsilon }\cdot \nabla u_{\varepsilon }\right)
u_{0,\varepsilon }dx & = & \dint\nolimits_{\Omega _{h}^{-}}\left(
v_{0,d}^{-}\cdot \nabla u_{0}^{-}\right) u^{-}dx,%
\end{array}%
\end{equation*}%
$u_{0,\varepsilon }$ being independent of $\varepsilon $ in $\Omega ^{+}\cup
\Omega _{h}^{-}$. We then write%
\begin{equation*}
\begin{array}{l}
D\dint\nolimits_{Y_{\varepsilon }\left( \omega \right) }\nabla
u_{\varepsilon }\cdot \nabla u_{0,\varepsilon
}dx-\dint\nolimits_{Y_{\varepsilon }\left( \omega \right) }\left(
v_{\varepsilon }\cdot \nabla u_{0,\varepsilon }\right) u_{\varepsilon }dx \\ 
\quad 
\begin{array}{ll}
= & D\dint\nolimits_{Y_{\varepsilon }\left( \omega \right) }\nabla _{\tau
}u_{\varepsilon }\cdot \nabla _{\tau }u_{0,\varepsilon
}dx-\dint\nolimits_{Y_{\varepsilon }\left( \omega \right) }\left( \left(
v_{\varepsilon }\right) _{\tau }\cdot \nabla _{\tau }u_{0,\varepsilon
}\right) u_{\varepsilon }dx \\ 
& \quad +D\dint\nolimits_{Y_{\varepsilon }\left( \omega \right) }\dfrac{%
\partial u_{\varepsilon }}{\partial x_{3}}\dfrac{\partial u_{0,\varepsilon }%
}{\partial x_{3}}dx-\dint\nolimits_{Y_{\varepsilon }\left( \omega \right)
}\left( v_{\varepsilon ,s}\right) _{3}\dfrac{\partial u_{0,\varepsilon }}{%
\partial x_{3}}u_{\varepsilon }dx.%
\end{array}%
\end{array}%
\end{equation*}

A direct computation gives%
\begin{equation*}
\begin{array}{l}
\underset{\varepsilon \rightarrow 0}{\lim }\left(
D\dint\nolimits_{Y_{\varepsilon }\left( \omega \right) }\dfrac{\partial
u_{\varepsilon }}{\partial x_{3}}\dfrac{\partial u_{0,\varepsilon }}{%
\partial x_{3}}dx-\dint\nolimits_{Y_{\varepsilon }\left( \omega \right)
}\left( \left( v_{\varepsilon ,s}\right) _{3}\dfrac{\partial
u_{0,\varepsilon }}{\partial x_{3}}\right) u_{\varepsilon }dx\right) \\ 
\quad 
\begin{array}{ll}
= & \underset{\varepsilon \rightarrow 0}{\lim }\underset{\left( i,j\right)
\in I_{\varepsilon }\left( \omega \right) }{\dsum }\dint\nolimits_{Y_{%
\varepsilon ,ij}\left( \omega \right) }\left( D\dfrac{\partial
u_{\varepsilon }}{\partial x_{3}}\dfrac{\partial \bar{u}_{\varepsilon ,ij}}{%
\partial x_{3}}q_{i}q_{j}-\left( v_{\varepsilon ,s}\right) _{3}\dfrac{%
\partial \bar{u}_{\varepsilon ,ij}}{\partial x_{3}}u_{\varepsilon
}q_{i}q_{j}\right) dx \\ 
= & \underset{\varepsilon \rightarrow 0}{\lim }\left( 
\begin{array}{l}
\underset{\left( i,j\right) \in I_{\varepsilon }\left( \omega \right) }{%
\dsum }\dint\nolimits_{Y_{\varepsilon ,ij}\left( \omega \right) }\left( -D%
\dfrac{\partial }{\partial x_{3}}\left( q_{i}q_{j}\dfrac{\partial \bar{u}%
_{\varepsilon ,ij}}{\partial x_{3}}\right) -\left( v_{\varepsilon ,s}\right)
_{3}q_{i}q_{j}\dfrac{\partial \bar{u}_{\varepsilon ,ij}}{\partial x_{3}}%
\right) u_{\varepsilon }dx \\ 
\quad -D\underset{\left( i,j\right) \in I_{\varepsilon }\left( \omega
\right) }{\dsum }\dint\nolimits_{\Gamma _{0,\varepsilon ,ij}^{+}\left(
\omega \right) }\dfrac{\partial \bar{u}_{\varepsilon ,ij}}{\partial x_{3}}%
\mid _{x_{3}=0}q_{i}q_{j}\left( 0\right) u_{\varepsilon }dx^{\prime } \\ 
\quad +D\underset{\left( i,j\right) \in I_{\varepsilon }\left( \omega
\right) }{\dsum }\dint\nolimits_{\Gamma _{h,\varepsilon ,ij}^{-}\left(
\omega \right) }\left( \dfrac{\partial \bar{u}_{\varepsilon ,ij}}{\partial
x_{3}}\mid _{x_{3}=-h}\right) q_{i}q_{j}\left( -h\varepsilon ^{-\theta
}\right) u_{\varepsilon }dx^{\prime }%
\end{array}%
\right) .%
\end{array}%
\end{array}%
\end{equation*}

Hence%
\begin{equation*}
\begin{array}{l}
\underset{\varepsilon \rightarrow 0}{\lim }\left(
D\dint\nolimits_{Y_{\varepsilon }\left( \omega \right) }\dfrac{\partial
u_{\varepsilon }}{\partial x_{3}}\dfrac{\partial u_{0,\varepsilon }}{%
\partial x_{3}}dx-\dint\nolimits_{Y_{\varepsilon }\left( \omega \right)
}\left( \left( v_{\varepsilon ,s}\right) _{3}\dfrac{\partial
u_{0,\varepsilon }}{\partial x_{3}}\right) u_{\varepsilon }dx\right) \\ 
\quad =\underset{\varepsilon \rightarrow 0}{\lim }\left( 
\begin{array}{l}
\underset{\left( i,j\right) \in I_{\varepsilon }\left( \omega \right) }{%
\dsum }\dint\nolimits_{Y_{\varepsilon ,ij}\left( \omega \right) }\left(
\left( v_{\varepsilon ,ij}\right) _{3}-\left( v_{\varepsilon ,s}\right)
_{3}\right) \dfrac{\partial \bar{u}_{\varepsilon ,ij}}{\partial x_{3}}%
q_{i}q_{j}u_{\varepsilon }dx \\ 
\quad -D\underset{\left( i,j\right) \in I_{\varepsilon }\left( \omega
\right) }{\dsum }\dint\nolimits_{\Gamma _{0,\varepsilon ,ij}^{+}\left(
\omega \right) }\left( q_{i}q_{j}\left( 0\right) \dfrac{\partial \bar{u}%
_{\varepsilon ,ij}}{\partial x_{3}}\mid _{x_{3}=0}u_{\varepsilon
}^{+}\right) dx^{\prime } \\ 
\quad -D\underset{\left( i,j\right) \in I_{\varepsilon }\left( \omega
\right) }{\dsum }\dint\nolimits_{\Gamma _{h,\varepsilon ,ij}^{-}\left(
\omega \right) }\left( \dfrac{\partial \bar{u}_{\varepsilon ,ij}}{\partial
x_{3}}\mid _{x_{3}=-h}\right) q_{i}q_{j}\left( -h\varepsilon ^{-\theta
}\right) u_{\varepsilon }^{-}dx^{\prime }%
\end{array}%
\right) .%
\end{array}%
\end{equation*}

Using (\ref{equaub})$_{1}$, we have%
\begin{equation*}
\begin{array}{l}
\underset{\varepsilon \rightarrow 0}{\lim }\left(
D\dint\nolimits_{Y_{\varepsilon }\left( \omega \right) }\dfrac{\partial
u_{\varepsilon }}{\partial x_{3}}\dfrac{\partial u_{0,\varepsilon }}{%
\partial x_{3}}dx-\dint\nolimits_{Y_{\varepsilon }\left( \omega \right)
}\left( \left( v_{\varepsilon ,s}\right) _{3}\dfrac{\partial
u_{0,\varepsilon }}{\partial x_{3}}\right) u_{\varepsilon }dx\right) \\ 
\quad 
\begin{array}{ll}
= & D\underset{\varepsilon \rightarrow 0}{\lim }\underset{\left( i,j\right)
\in I_{\varepsilon }\left( \omega \right) }{\dsum }\dint\nolimits_{\Gamma
_{0,\varepsilon ,ij}^{+}\left( \omega \right) }\dfrac{\left( u^{-}\left(
x^{\prime },-h\right) -u^{+}\left( x^{\prime },0\right) \right) }{%
\dint\nolimits_{-h}^{0}\dfrac{1}{q_{i}\left( -\varepsilon ^{-\theta
}t\right) q_{j}\left( -\varepsilon ^{-\theta }t\right) }\exp \left( -\dfrac{%
t\left( v_{\varepsilon ,ij}\right) _{3}}{D}\right) dt}u_{\varepsilon
}^{+}\left( x^{\prime },-h\right) dx^{\prime } \\ 
& -D\underset{\varepsilon \rightarrow 0}{\lim }\underset{\left( i,j\right)
\in I_{\varepsilon }\left( \omega \right) }{\dsum }\dint\nolimits_{\Gamma
_{h,\varepsilon ,ij}^{-}\left( \omega \right) }\dfrac{\left( u^{-}\left(
x^{\prime },-h\right) -u^{+}\left( x^{\prime },0\right) \right) \exp \left( 
\dfrac{h\left( v_{\varepsilon ,ij}\right) _{3}}{D}\right) }{%
\dint\nolimits_{-h}^{0}\dfrac{1}{q_{i}\left( -\varepsilon ^{-\theta
}t\right) q_{j}\left( -\varepsilon ^{-\theta }t\right) }\exp \left( -\dfrac{%
t\left( v_{\varepsilon ,ij}\right) _{3}}{D}\right) dt}u_{\varepsilon
}^{-}\left( x^{\prime },0\right) dx^{\prime }.%
\end{array}%
\end{array}%
\end{equation*}

Introducing the change of variables $s=-\varepsilon ^{-\theta }t$, we get%
\begin{equation*}
\begin{array}{l}
\dint\nolimits_{-h}^{0}\dfrac{1}{q_{i}\left( -\varepsilon ^{-\theta
}t\right) q_{j}\left( -\varepsilon ^{-\theta }t\right) }\exp \left( -\dfrac{%
t\left( v_{\varepsilon ,ij}\right) _{3}}{D}\right) dt \\ 
\quad =\dfrac{h}{\varepsilon ^{-\theta }h}\dint\nolimits_{0}^{\varepsilon
^{-\theta }h}\dfrac{1}{q_{i}\left( s\right) q_{j}\left( s\right) }\exp
\left( \varepsilon ^{\theta }s\dfrac{\left( v_{\varepsilon ,ij}\right) _{3}}{%
D}\right) ds\underset{\varepsilon \rightarrow 0}{\rightarrow }h\left\langle
1/q^{2}\right\rangle ,%
\end{array}%
\end{equation*}%
using the ergodicity property (\ref{Ergo}). Thus, using the proof of Lemma %
\ref{conv-vites-fiss}, we get%
\begin{equation*}
\begin{array}{l}
\underset{\varepsilon \rightarrow 0}{\lim }\left(
D\dint\nolimits_{Y_{\varepsilon }\left( \omega \right) }\dfrac{\partial
u_{\varepsilon }}{\partial x_{3}}\dfrac{\partial u_{0,\varepsilon }}{%
\partial x_{3}}dx-\dint\nolimits_{Y_{\varepsilon }\left( \omega \right)
}\left( \left( v_{\varepsilon ,s}\right) _{3}\dfrac{\partial
u_{0,\varepsilon }}{\partial x_{3}}\right) u_{\varepsilon }dx\right) \\ 
\quad =\dfrac{D}{h\left\langle 1/q^{2}\right\rangle }\dint\nolimits_{\Sigma
}\left( u_{0}^{+}\left( u^{-}-u^{+}\right) -u_{0}^{-}\left(
u^{-}-u^{+}\right) \exp \left( \dfrac{p_{0}^{+}-p_{0}^{-}}{D\langle
q^{2}\rangle \left\langle 1/q^{2}\right\rangle }\dfrac{k_{0}}{\mu }\right)
\right) dx^{\prime }.%
\end{array}%
\end{equation*}

We now compute, using Lemma \ref{coef-dif}%
\begin{equation*}
\begin{array}{l}
\underset{\varepsilon \rightarrow 0}{\lim }\left(
D\dint\nolimits_{Y_{\varepsilon }\left( \omega \right) }\nabla _{\tau
}u_{\varepsilon }\cdot \nabla _{\tau }u_{0,\varepsilon
}dx-\dint\nolimits_{Y_{\varepsilon }\left( \omega \right) }\left( \left(
v_{\varepsilon }\right) _{\tau }\cdot \nabla _{\tau }u_{0,\varepsilon
}\right) u_{\varepsilon }dx\right) \\ 
\quad 
\begin{array}{ll}
= & \underset{\varepsilon \rightarrow 0}{\lim }\left(
\dint\nolimits_{Y_{\varepsilon }\left( \omega \right) }D^{\ast }\nabla
_{\tau }u_{\varepsilon }\cdot \left( \nabla _{\tau }u\right) _{0,\varepsilon
}dx-\dint\nolimits_{Y_{\varepsilon }\left( \omega \right) }\left( \left(
v_{\varepsilon }\right) _{\tau }\cdot \left( \nabla _{\tau }u\right)
_{0,\varepsilon }\right) u_{\varepsilon }dx\right) \\ 
= & h\left\langle q^{2}\left( 0\right) \right\rangle \dint\nolimits_{\Sigma
}D^{\ast }\nabla _{\tau }u_{0}^{+}\cdot \left( \nabla _{\tau }u^{+}\right)
dx^{\prime }-h\left\langle q^{2}\left( 0\right) \right\rangle
\dint\nolimits_{\Sigma }\left( \left( v_{0}\right) _{\tau }\cdot \left(
\nabla _{\tau }u^{+}\right) \right) u_{0}^{+}dx^{\prime },%
\end{array}%
\end{array}%
\end{equation*}%
which leads to the limit variational formulation (\ref{varform}).
\end{proof}

The problem associated to this limit variational formulation (\ref{varform})
is given in the following Corollary.

\begin{corollary}
\label{corollary2}The sequence $\left( u_{\varepsilon }\right) _{\varepsilon
}$, where $u_{\varepsilon }$ is the solution of (\ref{eq9}), $\tau _{1}$%
-converges to the solution $u_{0}$ of the problem%
\begin{equation}
\left\{ 
\begin{array}{rlll}
-\func{div}\left( \widehat{D}\nabla u_{0}^{+}\right) +v_{0,d}^{+}\cdot
\nabla u_{0}^{+} & = & \left\vert Z^{1}\right\vert f & \text{in }\Omega ^{+},
\\ 
-\func{div}\left( \widehat{D}\nabla u_{0}^{-}\right) +v_{0,d}^{-}\cdot
\nabla u_{0}^{-}\quad & = & 0 & \text{in }\Omega _{h}^{-}, \\ 
-\widehat{D}\nabla u^{+}\cdot e_{3}\quad &  &  &  \\ 
-h\left\langle q^{2}\left( 0\right) \right\rangle \func{div}_{\tau }\left(
D^{\ast }\nabla u_{0}^{+}\right) \quad &  &  &  \\ 
+h\left\langle q^{2}\left( 0\right) \right\rangle \left( v_{0,f}\right)
_{\tau }\cdot \nabla _{\tau }u_{0}^{+} & = & \dfrac{D}{h\left\langle
1/q^{2}\left( 0\right) \right\rangle }\left( 
\begin{array}{l}
u_{0}^{+}\left( .,0\right) \\ 
-u_{0}^{-}\left( .,-h\right) A%
\end{array}%
\right) & \text{on }\Gamma _{0}^{+}, \\ 
D^{\ast }\nabla _{\tau }u_{0}^{+}\cdot n_{\tau } & = & 0 & \text{on }%
\partial \Sigma , \\ 
\widehat{D}\nabla u_{0}^{-}\cdot e_{3} & = & \dfrac{-D}{h\left\langle
1/q^{2}\left( 0\right) \right\rangle }\left( 
\begin{array}{l}
u_{0}^{+}\left( .,0\right) \\ 
-u_{0}^{-}\left( .,-h\right) A%
\end{array}%
\right) & \text{on }\Gamma _{h}^{-}, \\ 
u_{0}^{+} & = & 0 & \text{on }\Gamma ^{+}, \\ 
u_{0}^{-} & = & 0 & \text{on }\Gamma ^{-},%
\end{array}%
\right.  \label{equ33}
\end{equation}%
where $A=\exp \left( \frac{p_{0}^{+}-p_{0}^{-}}{D\langle q^{2}\rangle
\langle 1/q^{2}\rangle }\frac{k_{0}}{\mu }\right) $, $u_{0}\mid _{\Omega
^{+}}=:u_{0}^{+}$ and $u_{0}\mid _{\Omega _{h}^{-}}=:u_{0}^{-}$.
\end{corollary}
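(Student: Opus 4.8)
The plan is to deduce the boundary-value problem (\ref{equ33}) from the limit variational identity (\ref{varform}) of Theorem \ref{theorem2} by the standard device of testing against suitably localized functions; no further passage to the limit is needed, and uniqueness for (\ref{equ33}) is inherited from the uniqueness of the solution of (\ref{varform}) already recorded in Theorem \ref{theorem2}.

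First I would recover the two bulk equations. Choosing $u^{-}=0$ and $u^{+}\in C_{c}^{\infty}\left(\Omega^{+}\right)$ in (\ref{varform}), all surface integrals over $\Sigma$ and all terms supported in $\Omega_{h}^{-}$ vanish, leaving $\int_{\Omega^{+}}\widehat{D}\nabla u_{0}^{+}\cdot\nabla u^{+}\,dx+\int_{\Omega^{+}}\left(v_{0,d}^{+}\cdot\nabla u_{0}^{+}\right)u^{+}\,dx=\left\vert Z^{1}\right\vert\int_{\Omega^{+}}fu^{+}\,dx$; since $u^{+}$ is an arbitrary compactly supported test function this is precisely $-\func{div}\left(\widehat{D}\nabla u_{0}^{+}\right)+v_{0,d}^{+}\cdot\nabla u_{0}^{+}=\left\vert Z^{1}\right\vert f$ in $\mathcal{D}^{\prime}\left(\Omega^{+}\right)$, i.e.\ (\ref{equ33})$_{1}$. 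The symmetric choice $u^{+}=0$, $u^{-}\in C_{c}^{\infty}\left(\Omega_{h}^{-}\right)$ gives (\ref{equ33})$_{2}$. The homogeneous Dirichlet conditions (\ref{equ33})$_{6,7}$ are immediate, since $u_{0}^{+}\in H_{\Gamma^{+}}^{1}\left(\Omega^{+}\right)$ and $u_{0}^{-}\in H_{\Gamma^{-}}^{1}\left(\Omega_{h}^{-}\right)$ by the definition of $\tau_{1}$-convergence.

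Next I would extract the transmission conditions. Take $u^{-}=0$ and $u^{+}\in C^{\infty}\left(\overline{\Omega^{+}}\right)$ vanishing near $\Gamma^{+}$ but not near $\Gamma_{0}^{+}=\Sigma\times\left\{0\right\}$. Integrating the volume term $\int_{\Omega^{+}}\widehat{D}\nabla u_{0}^{+}\cdot\nabla u^{+}$ by parts and using (\ref{equ33})$_{1}$, the bulk contributions cancel against $\left\vert Z^{1}\right\vert\int_{\Omega^{+}}fu^{+}$ and there survives only $-\int_{\Gamma_{0}^{+}}\left(\widehat{D}\nabla u_{0}^{+}\cdot e_{3}\right)u^{+}\,dx^{\prime}$ (the outer normal of $\Omega^{+}$ along $\Sigma\times\left\{0\right\}$ being $-e_{3}$). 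In the surface term I would apply the tangential divergence theorem on $\Sigma$ to $h\left\langle q^{2}\left(0\right)\right\rangle\int_{\Sigma}D^{\ast}\nabla_{\tau}u_{0}^{+}\cdot\nabla_{\tau}u^{+}$, producing $-h\left\langle q^{2}\left(0\right)\right\rangle\int_{\Sigma}\func{div}_{\tau}\left(D^{\ast}\nabla_{\tau}u_{0}^{+}\right)u^{+}\,dx^{\prime}$ together with a boundary term $h\left\langle q^{2}\left(0\right)\right\rangle\int_{\partial\Sigma}\left(D^{\ast}\nabla_{\tau}u_{0}^{+}\cdot n_{\tau}\right)u^{+}\,d\sigma$, and for the surface convective term I would use $\func{div}_{\tau}\left(v_{0,f}\right)_{\tau}=0$ (Corollary \ref{corollary1}) and $\left(v_{0,f}\right)_{\tau}\cdot n=0$ on $\partial\Sigma$ to rewrite $-h\left\langle q^{2}\left(0\right)\right\rangle\int_{\Sigma}\left(\left(v_{0}\right)_{\tau}\cdot\nabla_{\tau}u^{+}\right)u_{0}^{+}\,dx^{\prime}=h\left\langle q^{2}\left(0\right)\right\rangle\int_{\Sigma}\left(\left(v_{0,f}\right)_{\tau}\cdot\nabla_{\tau}u_{0}^{+}\right)u^{+}\,dx^{\prime}$. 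Collecting terms, (\ref{varform}) reduces to a single identity tested against $u^{+}|_{\Sigma}$ and $u^{+}|_{\partial\Sigma}$: restricting first to $u^{+}$ that vanish on $\partial\Sigma$ yields the interface equation (\ref{equ33})$_{3}$ on $\Gamma_{0}^{+}$ (with the evident reading that the printed $\nabla u^{+}$ there is $\nabla u_{0}^{+}$), and then allowing $u^{+}$ free on $\partial\Sigma$ yields the Neumann condition (\ref{equ33})$_{4}$. The mirror argument with $u^{+}=0$ and $u^{-}\in C^{\infty}\left(\overline{\Omega_{h}^{-}}\right)$ vanishing near $\Gamma^{-}$, where now the outer normal of $\Omega_{h}^{-}$ along $\Sigma\times\left\{-h\right\}$ is $+e_{3}$ and there are no surface gradient terms in $u^{-}$, gives the flux condition (\ref{equ33})$_{5}$ on $\Gamma_{h}^{-}$.

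The only step that needs genuine care is this last manipulation of the two-dimensional integrals over $\Sigma$: one must justify the tangential integration by parts on the smooth surface $\Sigma$ (regularity of $u_{0}^{+}|_{\Sigma}$, of $D^{\ast}\nabla_{\tau}u_{0}^{+}$ and of $\left(v_{0,f}\right)_{\tau}$), keep the two opposite normal orientations of $\Omega^{+}$ and $\Omega_{h}^{-}$ along the faces $x_{3}=0$ and $x_{3}=-h$ straight, and check that the zero-order coupling term in (\ref{varform}) splits, with opposite signs, into the boundary datum $\frac{D}{h\left\langle 1/q^{2}\left(0\right)\right\rangle}\left(u_{0}^{+}\left(\cdot,0\right)-u_{0}^{-}\left(\cdot,-h\right)A\right)$ appearing on $\Gamma_{0}^{+}$ and on $\Gamma_{h}^{-}$ in (\ref{equ33})$_{3,5}$, with $A=\exp\left(\frac{p_{0}^{+}-p_{0}^{-}}{D\left\langle q^{2}\right\rangle\left\langle 1/q^{2}\right\rangle}\frac{k_{0}}{\mu}\right)$. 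Everything else is routine bookkeeping; the analytic content of the statement is already contained in Theorem \ref{theorem2}.
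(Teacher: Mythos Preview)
Your proposal is correct and follows exactly the route the paper intends: the paper's own proof is the single sentence ``This is an immediate consequence of the limit variational formulation (\ref{varform}),'' and what you have written is precisely the detailed unpacking of that sentence---localized test functions to get (\ref{equ33})$_{1,2}$, integration by parts in the bulk and along $\Sigma$ to read off (\ref{equ33})$_{3,4,5}$, and the Dirichlet conditions from the ambient spaces. Your tracking of the normal orientations ($-e_{3}$ on $\Gamma_{0}^{+}$, $+e_{3}$ on $\Gamma_{h}^{-}$) and the splitting of the coupling term are consistent with the signs in (\ref{equ33}).
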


\begin{proof}
This is an immediate consequence of the limit variational formulation (\ref%
{varform}).
\end{proof}

\begin{remark}
\label{Remark2}Consider the case of a dispersive contaminant with a
diffusion coefficient $D\left( x,\omega \right) $ defined through $D\left(
x,\omega \right) =D_{mol}+D_{disp}\left( x,\omega \right) $ with%
\begin{equation*}
D_{disp}\left( x,\omega \right) =\left\{ 
\begin{array}{ll}
D_{disp}\left( x\right) & \text{in }\Omega ^{+}\cup \Omega _{h}^{-}, \\ 
D_{disp}\left( x_{1},x_{2},-\varepsilon ^{-\theta }x_{3}+\alpha _{ij}\left(
\omega \right) ,\omega \right) & \text{in }Y_{\varepsilon ,ij}\left( \omega
\right) ,%
\end{array}%
\right.
\end{equation*}%
where $\left( \alpha _{ij}\left( \omega \right) \right) _{i,j\in \mathbb{Z}}$
is a sequence of random variables such that $\left\vert \alpha _{ij}\left(
\omega \right) \right\vert \leq C$,\ $\forall i,j\in \mathbb{Z}$, with
probability 1, $C$ being some non-random constant. We suppose that $D$ is
continuous with respect to the variable $x$ and, with probability 1, $%
d_{0}\leq D\left( x,\omega \right) \leq d_{1}$, where $d_{0}$ and $d_{1}$
are positive and non-random constants. We suppose that $D_{disp}$ is a
stationary random process.

Let $u\in C^{2}\left( \overline{\Omega }\right) $ be such that $u=0$ on $%
\Gamma $. We build the modified test-function $\bar{u}_{\varepsilon ,ij}$
inside the fissure $Y_{\varepsilon ,ij}\left( \omega \right) $%
\begin{equation*}
\begin{array}{l}
\bar{u}_{\varepsilon ,ij}\left( x_{1},x_{2},x_{3}\right) =u^{+}\left(
x_{1},x_{2},0\right) \\ 
\quad +\dfrac{\left( u^{-}\left( x_{1},x_{2},-h\right) -u^{+}\left(
x_{1},x_{2},0\right) \right) \dint\nolimits_{x_{3}}^{0}\dfrac{\exp \left(
-t\left( v_{\varepsilon ,ij}\right) _{3}\right) }{q_{i}\left( -\varepsilon
^{-\theta }t\right) q_{j}\left( -\varepsilon ^{-\theta }t\right) D_{\ast
}\left( -\varepsilon ^{-\theta }t\right) }dt}{\dint\nolimits_{-h}^{0}\dfrac{%
\exp \left( -t\left( v_{\varepsilon ,ij}\right) _{3}\right) }{q_{i}\left(
-\varepsilon ^{-\theta }t\right) q_{j}\left( -\varepsilon ^{-\theta
}t\right) D_{\ast }\left( -\varepsilon ^{-\theta }t\right) }dt},%
\end{array}%
\end{equation*}%
where $\left( D_{\ast }\right) \left( s\right) =\left(
D_{mol}+D_{disp}\left( i\varepsilon ,j\varepsilon ,s+\alpha _{ij}\left(
\omega \right) ,\omega \right) \right) $. Implementing this test-function in
the above process, one gets at the limit a problem similar to (\ref{equ33}),
except that $\left\langle 1/q^{2}\left( 0\right) \right\rangle $ is now
replaced by $\left\langle 1/D_{\ast }\left( .\right) q^{2}\left( 0\right)
\right\rangle $, where $\left\langle 1/\left( D_{\ast }q^{2}\left( 0\right)
\right) \right\rangle $ is the mathematical expectation of $1/\left(
q^{2}\left( t\right) D_{\ast }\left( .,t\right) \right) $, with respect to
the measure probability $P$, and $b_{j}$ is replaced by the $Z$-periodic
solution $b_{j}$ of the problem%
\begin{equation*}
\left\{ 
\begin{array}{rllll}
\func{div}\left( D\left( x\right) \left( e_{j}+\nabla b_{j}\right) \right) & 
= & 0 & \text{in }Z^{1} & j=1,2,3, \\ 
\left( \nabla b_{j}+e_{j}\right) \cdot n & = & 0 & \text{on }S, & 
\end{array}%
\right.
\end{equation*}%
and $c_{m}$ by%
\begin{equation*}
\left\{ 
\begin{array}{rllll}
\func{div}\left( D\left( x,\omega \right) \left( e_{m}+\nabla c_{m}\right)
\right) & = & 0 & \text{in }Z^{\prime } & m=1,2, \\ 
\left( \nabla c_{m}+e_{m}\right) \cdot n & = & 0 & \text{on }\partial
Z^{\prime }. & 
\end{array}%
\right.
\end{equation*}
\end{remark}

\subsection{The asymptotic behaviour in the case of a reactive contaminant ($%
\mathcal{R}>0$)}

In this subsection, we consider the reaction-diffusion equation (\ref{eq9})
with first-order reaction, that is with $\mathcal{R}>0$ (see for example 
\cite{Bear}, \cite{Horn}). We denote $w_{\varepsilon ,ij}$ the solution of
the differential equation%
\begin{equation}
\left\{ 
\begin{array}{rll}
-D\dfrac{\partial }{\partial x_{3}}\left( \left( q_{i}q_{j}\right) \left(
-\varepsilon ^{-\theta }x_{3}\right) \dfrac{\partial w_{\varepsilon ,ij}}{%
\partial x_{3}}\right) \left( x_{3}\right) -\left( q_{i}q_{j}\right) \left(
-\varepsilon ^{-\theta }x_{3}\right) v_{\varepsilon ,ij}\dfrac{\partial
w_{\varepsilon ,ij}}{\partial x_{3}}\left( x_{3}\right) &  &  \\ 
+\mathcal{R}\left( q_{i}q_{j}\right) \left( -\varepsilon ^{-\theta
}x_{3}\right) w_{\varepsilon ,ij}\left( x_{3}\right) & = & 0, \\ 
w_{\varepsilon ,ij}\left( 0\right) & = & 1, \\ 
w_{\varepsilon ,ij}^{\prime }\left( 0\right) & = & 0%
\end{array}%
\right.  \label{equ44}
\end{equation}%
and $z_{\varepsilon ,ij}$ the solution of%
\begin{equation}
\left\{ 
\begin{array}{rll}
-D\dfrac{\partial }{\partial x_{3}}\left( \left( q_{i}q_{j}\right) \left(
-\varepsilon ^{-\theta }x_{3}\right) \dfrac{\partial z_{\varepsilon ,ij}}{%
\partial x_{3}}\right) \left( x_{3}\right) -\left( q_{i}q_{j}\right) \left(
-\varepsilon ^{-\theta }x_{3}\right) v_{\varepsilon ,ij}\dfrac{\partial
z_{\varepsilon ,ij}}{\partial x_{3}}\left( x_{3}\right) &  &  \\ 
+\mathcal{R}\left( q_{i}q_{j}\right) \left( -\varepsilon ^{-\theta
}x_{3}\right) z_{\varepsilon ,ij}\left( x_{3}\right) & = & 0, \\ 
z_{\varepsilon ,ij}\left( 0\right) & = & 0, \\ 
z_{\varepsilon ,ij}^{\prime }\left( 0\right) & = & \dfrac{1}{%
q_{i}q_{j}\left( 0\right) },%
\end{array}%
\right.  \label{equ45}
\end{equation}%
where $v_{\varepsilon ,ij}$ is the velocity defined in (\ref{veloci}). We
have the following estimates.

\begin{proposition}
\label{proposition2}There exist non-random positive constants $C_{0}$ and $%
C_{1}$ independent of $\varepsilon $ and of $i$ and $j$, such that:

\begin{enumerate}
\item $\forall \varepsilon >0$, $\forall \left( i,j\right) \in
I_{\varepsilon }\left( \omega \right) $, $\forall x_{3}\in \left[ -h,0\right]
:1\leq w_{\varepsilon ,ij}\left( x_{3}\right) \leq C_{1}$ and $%
-C_{0}^{-1}x_{3}\leq z_{\varepsilon ,ij}\left( x_{3}\right) \leq C_{1}$.

\item $\forall \left( i,j\right) \in I_{\varepsilon }\left( \omega \right) $%
, $\forall x_{3}\in \left[ -h,0\right] $%
\begin{equation*}
\left\{ 
\begin{array}{rll}
\underset{\varepsilon \rightarrow 0}{\lim }\left\vert w_{\varepsilon
,ij}\left( x_{3}\right) -\cosh \left( \widehat{\mathcal{R}}x_{3}\right)
\right\vert & = & 0, \\ 
\underset{\varepsilon \rightarrow 0}{\lim }\left\vert z_{\varepsilon
,ij}\left( x_{3}\right) -\dfrac{\left\langle 1/q^{2}\left( 0\right)
\right\rangle }{\widehat{\mathcal{R}}}\sinh \left( \widehat{\mathcal{R}}%
x_{3}\right) \right\vert & = & 0, \\ 
\underset{\varepsilon \rightarrow 0}{\lim }\left\vert w_{\varepsilon
,ij}^{\prime }\left( x_{3}\right) \left( q_{i}q_{j}\right) \left(
-\varepsilon ^{-\theta }x_{3}\right) \exp \left( \dfrac{x_{3}v_{\varepsilon
,ij}}{D}\right) -\dfrac{\widehat{\mathcal{R}}}{\left\langle 1/q^{2}\left(
0\right) \right\rangle }\sinh \left( \widehat{\mathcal{R}}x_{3}\right)
\right\vert & = & 0, \\ 
\underset{\varepsilon \rightarrow 0}{\lim }\left\vert z_{\varepsilon
,ij}^{\prime }\left( x_{3}\right) \left( q_{i}q_{j}\right) \left(
-\varepsilon ^{-\theta }x_{3}\right) \exp \left( \dfrac{x_{3}v_{\varepsilon
,ij}}{D}\right) -\cosh \left( \widehat{\mathcal{R}}x_{3}\right) \right\vert
& = & 0,%
\end{array}%
\right.
\end{equation*}%
where $\widehat{\mathcal{R}}=\sqrt{\mathcal{R}\left\langle q^{2}\left(
0\right) \right\rangle \left\langle 1/q^{2}\left( 0\right) \right\rangle /D}$%
.
\end{enumerate}
\end{proposition}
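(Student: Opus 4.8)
\textbf{Strategy.} The plan is to first put both ordinary differential equations (\ref{equ44}) and (\ref{equ45}) into divergence (Sturm--Liouville) form, derive the a priori bounds of part 1 from a maximum principle together with a Gr\"onwall comparison, and then obtain part 2 by a one-dimensional homogenization argument based on the ergodic theorem (\ref{Ergo}). Throughout, write $a_{\varepsilon}(x_{3})=(q_{i}q_{j})(-\varepsilon^{-\theta}x_{3})$, so that (\ref{equ5}) gives $c_{1}^{2}\leq a_{\varepsilon}\leq c_{2}^{2}$ on $[-h,0]$, and $|v_{\varepsilon,ij}|\leq C$ uniformly in $\varepsilon,i,j$ (the limit pressures $p_{0}^{\pm}$ being bounded on $\overline{\Sigma}$, as is already implicit in the point evaluations in (\ref{veloci})). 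Since $v_{\varepsilon,ij}$ is independent of $x_{3}$, multiplying (\ref{equ44}) (resp.\ (\ref{equ45})) by $\rho_{\varepsilon}(x_{3})=\exp(v_{\varepsilon,ij}x_{3}/D)$ turns the equation into $-D(\rho_{\varepsilon}a_{\varepsilon}\phi')'+\mathcal{R}\rho_{\varepsilon}a_{\varepsilon}\phi=0$ for $\phi=w_{\varepsilon,ij}$ (resp.\ $\phi=z_{\varepsilon,ij}$); this is also the origin of the weight $\exp(x_{3}v_{\varepsilon,ij}/D)$ in part 2. Set $\sigma_{\varepsilon}=\rho_{\varepsilon}a_{\varepsilon}\phi'$ (exactly the quantity appearing in the last two limits of part 2), so that $\phi'=\sigma_{\varepsilon}/(\rho_{\varepsilon}a_{\varepsilon})$ and $\sigma_{\varepsilon}'=(\mathcal{R}/D)\rho_{\varepsilon}a_{\varepsilon}\phi$, with $\sigma_{\varepsilon}(0)=0$ when $\phi=w_{\varepsilon,ij}$ and $\sigma_{\varepsilon}(0)=1$ when $\phi=z_{\varepsilon,ij}$.

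\textbf{Part 1.} For $w=w_{\varepsilon,ij}$: since $\sigma_{\varepsilon}(0)=0$ and $\sigma_{\varepsilon}'=(\mathcal{R}/D)\rho_{\varepsilon}a_{\varepsilon}w\geq 0$ as long as $w\geq 0$, a standard first-integral (maximum-principle) argument shows that $w$ stays positive, that $w'\leq 0$, and hence that $w\geq w(0)=1$ on $[-h,0]$. For the upper bound one integrates $\sigma_{\varepsilon}'=(\mathcal{R}/D)\rho_{\varepsilon}a_{\varepsilon}w$ and then $\phi'=\sigma_{\varepsilon}/(\rho_{\varepsilon}a_{\varepsilon})$ from $x_{3}=0$; bounding $\rho_{\varepsilon}a_{\varepsilon}$ and $1/(\rho_{\varepsilon}a_{\varepsilon})$ by constants depending only on $c_{1},c_{2},D$ and the uniform bound on $|v_{\varepsilon,ij}|$ leads to $w(x_{3})\leq 1+\Lambda\int_{x_{3}}^{0}\int_{t}^{0}w(s)\,ds\,dt$ for an explicit $\Lambda$, and a Gr\"onwall comparison with the equality case (whose solution is $\cosh(\sqrt{\Lambda}\,x_{3})$) gives $1\leq w\leq\cosh(\sqrt{\Lambda}\,h)=:C_{1}$. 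For $z=z_{\varepsilon,ij}$ the same reasoning, now started from $\sigma_{\varepsilon}(0)=1$, shows that $z$ keeps a fixed sign on $[-h,0]$, that it is bounded in absolute value by $C_{1}$, and that its size is controlled above and below by a constant multiple of $|x_{3}|$, which is the stated two-sided estimate. All constants obtained this way depend only on $c_{1},c_{2},c_{3},\mathcal{R},D,h$ and on the uniform bound on $v_{\varepsilon,ij}$, hence are independent of $\varepsilon$ and of $i,j$.

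\textbf{Part 2.} By part 1 the fluxes $\sigma_{\varepsilon}$ are bounded in $C^{1}[-h,0]$ (because $\sigma_{\varepsilon}'=(\mathcal{R}/D)\rho_{\varepsilon}a_{\varepsilon}\phi$ is bounded in $C^{0}$) and then $\phi'=\sigma_{\varepsilon}/(\rho_{\varepsilon}a_{\varepsilon})$ shows $\phi$ is bounded in $C^{1}[-h,0]$ as well; hence, along a subsequence, $\phi\to\phi_{0}$ and $\sigma_{\varepsilon}\to\sigma_{0}$ in $C^{0}[-h,0]$. Rescaling $s=-\varepsilon^{-\theta}x_{3}$ exactly as in the proof of Lemma \ref{conv-vites-fiss}, the ergodic theorem (\ref{Ergo}) gives $a_{\varepsilon}\rightharpoonup\langle q^{2}(0)\rangle$ and $1/a_{\varepsilon}\rightharpoonup\langle 1/q^{2}(0)\rangle$ weakly-$\ast$ in $L^{\infty}(-h,0)$, while $\rho_{\varepsilon}$ converges uniformly to its limit $\rho_{0}$. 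Passing to the limit in the first-order system $\phi'=\sigma_{\varepsilon}/(\rho_{\varepsilon}a_{\varepsilon})$, $\sigma_{\varepsilon}'=(\mathcal{R}/D)\rho_{\varepsilon}a_{\varepsilon}\phi$ --- in which each product pairs an oscillating, weakly-$\ast$ convergent factor with a strongly convergent one, so the limit of the product is the product of the limits --- identifies $(\phi_{0},\sigma_{0})$ as the solution of the constant-coefficient limit system associated with the homogenized coefficients $D/\langle 1/q^{2}(0)\rangle$ (diffusion) and $\mathcal{R}\langle q^{2}(0)\rangle$ (reaction), with the Cauchy data inherited at $x_{3}=0$. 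For $w$ this forces $\phi_{0}''=\widehat{\mathcal{R}}^{\,2}\phi_{0}$, $\phi_{0}(0)=1$, $\phi_{0}'(0)=0$, whence $\phi_{0}(x_{3})=\cosh(\widehat{\mathcal{R}}x_{3})$ and $\sigma_{0}(x_{3})=(\widehat{\mathcal{R}}/\langle 1/q^{2}(0)\rangle)\sinh(\widehat{\mathcal{R}}x_{3})$; for $z$ one gets $\phi_{0}(0)=0$, $\phi_{0}'(0)=\langle 1/q^{2}(0)\rangle$, whence $\phi_{0}(x_{3})=(\langle 1/q^{2}(0)\rangle/\widehat{\mathcal{R}})\sinh(\widehat{\mathcal{R}}x_{3})$ and $\sigma_{0}(x_{3})=\cosh(\widehat{\mathcal{R}}x_{3})$. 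Since the limit problem does not depend on $(i,j)$ and the compactness estimates of part 1 are uniform in $(i,j)\in I_{\varepsilon}(\omega)$, the full family converges uniformly in $(i,j)$, and by uniqueness of the limit the whole sequence converges; this is part 2. The one genuinely delicate point is this passage to the limit: because of the $\varepsilon^{-\theta}$ oscillation of $a_{\varepsilon}$ the derivative $\phi'$ converges only weakly, and it is precisely the flux formulation --- in which $\rho_{\varepsilon}a_{\varepsilon}\phi'$, not $\phi'$, is the unknown and converges strongly --- combined with the ergodic averaging of $a_{\varepsilon}$ and $1/a_{\varepsilon}$ that allows $\phi_{0}$ and $\sigma_{0}$ to be identified.
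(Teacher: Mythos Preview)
Your argument and the paper's follow the same line: both multiply by the integrating factor $\rho_\varepsilon(x_3)=\exp(v_{\varepsilon,ij}x_3/D)$ to reach the self-adjoint form $-D(\rho_\varepsilon a_\varepsilon\phi')'+\mathcal{R}\rho_\varepsilon a_\varepsilon\phi=0$, and both then use the ergodic averaging (\ref{Ergo}) of $a_\varepsilon$ and $1/a_\varepsilon$. The packaging differs. The paper integrates the self-adjoint equation twice to obtain the Volterra integral equations (\ref{Volt}) --- your double integration of $\sigma_\varepsilon'$ and then of $\phi'$ is precisely this step --- derives part~1 from the positivity of the kernel and successive approximations, and for part~2 writes down the constant-coefficient limit Volterra equation and bounds the difference $w_{\varepsilon,ij}-w_{ij}$ by a further Volterra/Gr\"onwall argument; the derivative statements are then read off by differentiating (\ref{Volt}). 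You instead keep the first-order flux system $(\phi,\sigma_\varepsilon)$, obtain part~1 from a maximum-principle/Gr\"onwall comparison, extract uniformly convergent subsequences by Arzel\`a--Ascoli, and identify the limit by passing to the limit in the system, pairing the weak-$\ast$ limits of $a_\varepsilon$ and $1/a_\varepsilon$ with the strongly convergent factors. The paper's route is slightly more quantitative (a direct estimate on $|w_{\varepsilon,ij}-\cosh(\widehat{\mathcal R}\,\cdot)|$ without compactness); yours makes more transparent why the harmonic mean $\langle 1/q^2\rangle^{-1}$ governs the diffusive part while the arithmetic mean $\langle q^2\rangle$ governs the reactive part, in keeping with standard one-dimensional homogenization.
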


\begin{proof}
1. Multiplying the equations (\ref{equ44}) and (\ref{equ45}) by $\exp \left(
x_{3}v_{\varepsilon ,ij}/D\right) $ and integrating by parts the first term
of these equations, we obtain the Voltera type integral equations%
\begin{equation}
\left\{ 
\begin{array}{rll}
w_{\varepsilon ,ij}\left( x_{3}\right) & = & \dfrac{\mathcal{R}}{D}%
\dint_{0}^{x_{3}}\left( q_{i}q_{j}\right) \left( -\varepsilon ^{-\theta
}s\right) \exp \left( s\dfrac{v_{\varepsilon ,ij}}{D}\right) w_{\varepsilon
,ij}\left( s\right) \\ 
&  & \quad \times \left( \dint_{s}^{x_{3}}\dfrac{\exp \left( -\zeta \dfrac{%
v_{\varepsilon ,ij}}{D}\right) }{\left( q_{i}q_{j}\right) \left(
-\varepsilon ^{-\theta }\zeta \right) }d\zeta \right) ds+1, \\ 
z_{\varepsilon ,ij}\left( x_{3}\right) & = & \dfrac{\mathcal{R}}{D}\left(
\dint_{0}^{x_{3}}\left( q_{i}q_{j}\right) \left( -\varepsilon ^{-\theta
}s\right) \exp \left( s\dfrac{v_{\varepsilon ,ij}}{D}\right) z_{\varepsilon
,ij}\left( s\right) \right) \\ 
&  & \quad \times \left( \dint_{s}^{x_{3}}\dfrac{\exp \left( -\zeta \dfrac{%
v_{\varepsilon ,ij}}{D}\right) }{\left( q_{i}q_{j}\right) \left(
-\varepsilon ^{-\theta }\zeta \right) }d\zeta \right) ds+\dint_{0}^{x_{3}}%
\dfrac{\exp \left( -\zeta \dfrac{v_{\varepsilon ,ij}}{D}\right) }{\left(
q_{i}q_{j}\right) \left( -\varepsilon ^{-\theta }\zeta \right) }d\zeta ,%
\end{array}%
\right.  \label{Volt}
\end{equation}%
(which can be solved by the method of successive approximations). Taking
into account the hypothesis (\ref{equ5}), we obtain the first point of the
Proposition.

2. Consider the integral equations%
\begin{equation*}
\left\{ 
\begin{array}{rll}
w_{ij}\left( x_{3}\right) & = & \dfrac{\mathcal{R}}{D}\left\langle
q^{2}\left( 0\right) \right\rangle \left\langle 1/q^{2}\left( 0\right)
\right\rangle \dint_{0}^{x_{3}}\left( x_{3}-s\right) w_{ij}\left( s\right)
ds+1, \\ 
z_{ij}\left( x_{3}\right) & = & \dfrac{\mathcal{R}}{D}\left\langle
q^{2}\left( 0\right) \right\rangle \left\langle 1/q^{2}\left( 0\right)
\right\rangle \dint_{0}^{x_{3}}\left( x_{3}-s\right) z_{ij}\left( s\right)
ds+x_{3}\left\langle 1/q^{2}\left( 0\right) \right\rangle ,%
\end{array}%
\right.
\end{equation*}%
whose solutions are $w_{ij}\left( x_{3}\right) =\cosh \left( \widehat{%
\mathcal{R}}x_{3}\right) $ and $z_{ij}\left( x_{3}\right) =\left\langle
1/q^{2}\left( 0\right) \right\rangle \sinh \left( \widehat{\mathcal{R}}%
x_{3}\right) /\widehat{\mathcal{R}}$ respectively. The differences $%
w_{\varepsilon ,ij}-w_{ij}$ and $z_{\varepsilon ,ij}-z_{ij}$ satisfy
integral equations of Voltera type. Using the ergodic result (\ref{Ergo}),
we prove the convergences%
\begin{equation*}
\underset{\varepsilon \rightarrow 0}{\lim }\left\vert w_{\varepsilon
,ij}\left( x_{3}\right) -w_{ij}\left( x_{3}\right) \right\vert =\underset{%
\varepsilon \rightarrow 0}{\lim }\left\vert z_{\varepsilon ,ij}\left(
x_{3}\right) -z_{ij}\left( x_{3}\right) \right\vert =0,
\end{equation*}%
uniformly with respect to $x_{3}\in \left[ -h,0\right] $.

The last estimates for the derivatives follow from the derivation of the
equations (\ref{Volt}).
\end{proof}

Let $u\in C^{2}\left( \overline{\Omega }\right) $ be such that $u=0$ on $%
\Gamma ^{+}\cup \Gamma ^{-}$. We here define the test-function $\bar{u}%
_{\varepsilon ,ij}$ inside the fissure $Y_{\varepsilon ,ij}\left( \omega
\right) $ through%
\begin{equation*}
\bar{u}_{\varepsilon ,ij}\left( x\right) =u^{+}\left( x_{1},x_{2},0\right)
w_{\varepsilon ,ij}\left( x_{3}\right) +\frac{u^{-}\left(
x_{1},x_{2},-h\right) -u^{+}\left( x_{1},x_{2},0\right) w_{\varepsilon
,ij}\left( -h\right) }{z_{\varepsilon ,ij}\left( -h\right) }z_{\varepsilon
,ij}\left( x_{3}\right) .
\end{equation*}

We finally define the test-function $u_{0,\varepsilon }$ in the same way as (%
\ref{equ42}). It is easily proved that the sequence $\left( u_{0,\varepsilon
}\right) _{\varepsilon }$ $\tau _{1}$-converges to $u$. On the other hand,
we compute%
\begin{equation*}
\left\{ 
\begin{array}{rll}
u_{0,\varepsilon }\left( x^{\prime },0\right) & = & u^{+}\left(
x_{1},x_{2},0\right) , \\ 
\bar{u}_{\varepsilon ,ij}\left( x^{\prime },-h\right) & = & u^{-}\left(
x_{1},x_{2},-h\right) , \\ 
\dfrac{\partial u_{0,\varepsilon }}{\partial x_{3}}\mid _{x_{3}=0} & = & 
\dfrac{u^{-}\left( x_{1},x_{2},-h\right) -u^{+}\left( x_{1},x_{2},0\right)
w_{\varepsilon ,ij}\left( -h\right) }{z_{\varepsilon ,ij}\left( -h\right) }%
\dfrac{1}{q_{i}q_{j}\left( 0\right) }, \\ 
\dfrac{\partial u_{0,\varepsilon }}{\partial x_{3}}\mid _{x_{3}=-h} & = & 
u^{+}\left( x_{1},x_{2},0\right) w_{\varepsilon ,ij}^{\prime }\left(
-h\right) \\ 
&  & \quad +\dfrac{u^{-}\left( x_{1},x_{2},-h\right) -u^{+}\left(
x_{1},x_{2},0\right) w_{\varepsilon ,ij}\left( -h\right) }{z_{\varepsilon
,ij}\left( -h\right) }z_{\varepsilon ,ij}^{\prime }\left( -h\right) .%
\end{array}%
\right.
\end{equation*}

We have the following result.

\begin{lemma}
One has:%
\begin{equation*}
\begin{array}{l}
\underset{\varepsilon \rightarrow 0}{\lim }\underset{\left( i,j\right) \in
I_{\varepsilon }\left( \omega \right) }{\dsum }\dint\nolimits_{\Gamma
_{0,\varepsilon ,ij}^{+}\left( \omega \right) }\dfrac{\partial
u_{0,\varepsilon }}{\partial x_{3}}\mid _{x_{3}=0}\left( q_{i}q_{j}\right)
\left( 0\right) u_{\varepsilon }^{+}d\sigma \\ 
\text{\quad }=\dfrac{-1}{\left\langle 1/q^{2}\left( 0\right) \right\rangle }%
\dint_{\Sigma }\dfrac{\widehat{\mathcal{R}}}{\sinh \left( \widehat{\mathcal{R%
}}h\right) }\left( u^{-}-u^{+}\cosh \left( \widehat{\mathcal{R}}h\right)
\right) u_{0}^{+}dx^{\prime }, \\ 
\underset{\varepsilon \rightarrow 0}{\lim }\underset{\left( i,j\right) \in
I_{\varepsilon }\left( \omega \right) }{\dsum }\dint\nolimits_{\Gamma
_{h,\varepsilon ,ij}^{-}\left( \omega \right) }\dfrac{\partial
u_{0,\varepsilon }}{\partial x_{3}}\mid _{x_{3}=-h}\left( q_{i}q_{j}\right)
\left( h\varepsilon ^{-\theta }\right) u_{\varepsilon }^{-}d\sigma \\ 
\text{\quad }=\dfrac{-1}{\left\langle 1/q^{2}\left( 0\right) \right\rangle }%
\dint_{\Sigma }\dfrac{\widehat{\mathcal{R}}}{\sinh \left( \widehat{\mathcal{R%
}}h\right) }\left( u^{-}\cosh \left( \widehat{\mathcal{R}}h\right)
-u^{+}\right) \exp \left( \dfrac{p_{0}^{+}-p_{0}^{-}}{D\langle
1/q^{2}\rangle }\dfrac{k_{0}}{\mu }\right) u_{0}^{-}dx^{\prime },%
\end{array}%
\end{equation*}
\end{lemma}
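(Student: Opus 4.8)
The plan is to evaluate the two boundary limits directly from the explicit description of the test function $\bar u_{\varepsilon,ij}$ and the behaviour of $w_{\varepsilon,ij}$, $z_{\varepsilon,ij}$ established in Proposition \ref{proposition2}. First I would recall that, on the fissure $Y_{\varepsilon,ij}(\omega)$,
\[
\frac{\partial u_{0,\varepsilon}}{\partial x_{3}}\Big|_{x_{3}=0}
=\frac{u^{-}(x_{1},x_{2},-h)-u^{+}(x_{1},x_{2},0)w_{\varepsilon,ij}(-h)}{z_{\varepsilon,ij}(-h)}\,\frac{1}{q_{i}q_{j}(0)},
\]
so that the factor $q_{i}q_{j}(0)$ cancels in the integrand
$\frac{\partial u_{0,\varepsilon}}{\partial x_{3}}|_{x_{3}=0}\,(q_{i}q_{j})(0)\,u_{\varepsilon}^{+}$
over $\Gamma_{0,\varepsilon,ij}^{+}(\omega)$. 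By Proposition \ref{proposition2}, part 2, one has $w_{\varepsilon,ij}(-h)\to\cosh(\widehat{\mathcal R}h)$ and $z_{\varepsilon,ij}(-h)\to \langle 1/q^{2}(0)\rangle\sinh(\widehat{\mathcal R}h)/\widehat{\mathcal R}$ as $\varepsilon\to 0$, uniformly in $(i,j)$ and in $x_{3}$; hence the coefficient multiplying $u_{\varepsilon}^{+}$ converges uniformly to
\[
\frac{u^{-}(x',-h)-u^{+}(x',0)\cosh(\widehat{\mathcal R}h)}{\langle 1/q^{2}(0)\rangle\,\sinh(\widehat{\mathcal R}h)/\widehat{\mathcal R}}
=\frac{\widehat{\mathcal R}}{\langle 1/q^{2}(0)\rangle\,\sinh(\widehat{\mathcal R}h)}\bigl(u^{-}-u^{+}\cosh(\widehat{\mathcal R}h)\bigr).
\]

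Next I would pass to the limit in the Riemann-type sum $\sum_{(i,j)\in I_{\varepsilon}(\omega)}\int_{\Gamma_{0,\varepsilon,ij}^{+}(\omega)}(\cdots)\,d\sigma$. Each patch $\Gamma_{0,\varepsilon,ij}^{+}(\omega)$ has two-dimensional measure $\varepsilon^{2}q_{i}(0)q_{j}(0)+o(\varepsilon^{2})$ (as in the proof of Lemma \ref{conv-vites-fiss}), and $u_{\varepsilon}^{+}$ converges strongly to $u_{0}^{+}$ on $\Sigma$ by the trace embedding together with the weak-$H^{1}$ convergence in (\ref{conv-solut}); the smooth part of the integrand is frozen at the node $(i\varepsilon,j\varepsilon)$ up to an error that vanishes with $\varepsilon$. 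Applying the ergodic theorem (\ref{Ergo}) to the product $q_{i}(0)q_{j}(0)$ — which averages to $\langle q^{2}(0)\rangle$, but observe that this $\langle q^{2}(0)\rangle$ is exactly cancelled since the measure of $\Gamma_{0,\varepsilon,ij}^{+}$ carries the factor $q_{i}(0)q_{j}(0)$ while the integrand (after cancelling $q_iq_j(0)$ above) does not — one obtains
\[
\frac{-1}{\langle 1/q^{2}(0)\rangle}\int_{\Sigma}\frac{\widehat{\mathcal R}}{\sinh(\widehat{\mathcal R}h)}\bigl(u^{-}-u^{+}\cosh(\widehat{\mathcal R}h)\bigr)u_{0}^{+}\,dx',
\]
the overall sign being $-1$ because of the orientation of the outward normal on $\Gamma_{0}^{+}$.

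For the second limit, over $\Gamma_{h,\varepsilon,ij}^{-}(\omega)$, I would proceed identically but starting from the expression for $\frac{\partial u_{0,\varepsilon}}{\partial x_{3}}|_{x_{3}=-h}$, which by the above formulas equals $u^{+}w_{\varepsilon,ij}'(-h)+\frac{u^{-}-u^{+}w_{\varepsilon,ij}(-h)}{z_{\varepsilon,ij}(-h)}z_{\varepsilon,ij}'(-h)$. The key input is now the third and fourth convergences of Proposition \ref{proposition2}, which give the asymptotics of $w_{\varepsilon,ij}'(-h)(q_{i}q_{j})(-h\varepsilon^{-\theta})$ and $z_{\varepsilon,ij}'(-h)(q_{i}q_{j})(-h\varepsilon^{-\theta})$ after multiplication by $\exp(-hv_{\varepsilon,ij}/D)$; to produce the reported factor $\exp\bigl(\frac{p_{0}^{+}-p_{0}^{-}}{D\langle 1/q^{2}\rangle}\frac{k_{0}}{\mu}\bigr)$ one uses the definition (\ref{veloci}) of $v_{\varepsilon,ij}$, since $hv_{\varepsilon,ij}/D = (p_{0,\varepsilon,ij}^{+}-p_{0,\varepsilon,ij}^{-})k_{0}/(D\mu\langle q^{2}\rangle\langle 1/q^{2}\rangle)$, and $p_{0,\varepsilon,ij}^{\pm}\to p_{0}^{\pm}(x',\cdot)$ by continuity. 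Combining these with $u_{\varepsilon}^{-}\to u_{0}^{-}$ strongly on $\Gamma_{h}^{-}$ and again invoking (\ref{Ergo}) together with $|\Gamma_{h,\varepsilon,ij}^{-}|=\varepsilon^{2}q_{i}q_{j}+o(\varepsilon^{2})$ yields the stated expression with the hyperbolic functions reorganised as $u^{-}\cosh(\widehat{\mathcal R}h)-u^{+}$.

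The main obstacle is the careful bookkeeping in the limit of the sums: one must verify that replacing the slowly varying data $u^{\pm}$, $v_{\varepsilon,ij}$, $p_{0,\varepsilon,ij}^{\pm}$, and the ratios of $w_{\varepsilon,ij},z_{\varepsilon,ij}$ by their values at the grid points produces only $o(1)$ errors — this rests on the uniform (in $(i,j)$ and in $x_{3}$) convergences of Proposition \ref{proposition2}, the uniform bound $|I_{\varepsilon}(\omega)|=O(\varepsilon^{-2})$, and the continuity/regularity of $u$, $p_{0}^{\pm}$ — and that the boundary traces of $u_{\varepsilon}$ converge strongly so that products of weakly and strongly convergent factors pass to the limit. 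Once these routine estimates are in place, the ergodic theorem (\ref{Ergo}) does the rest, exactly as in Lemma \ref{conv-vites-fiss}.
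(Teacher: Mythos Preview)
Your approach is essentially the same as the paper's: compute the boundary derivatives of $\bar u_{\varepsilon,ij}$ explicitly, replace $w_{\varepsilon,ij}(-h)$, $z_{\varepsilon,ij}(-h)$, $w_{\varepsilon,ij}'(-h)$, $z_{\varepsilon,ij}'(-h)$ by their limits from Proposition~\ref{proposition2}, and pass to the limit in the Riemann-type sums over $\Gamma_{0,\varepsilon}^{+}$ and $\Gamma_{h,\varepsilon}^{-}$ exactly as in Lemma~\ref{conv-vites-fiss}. One correction: the overall minus sign in the first limit is \emph{not} an orientation artefact --- it comes from the fact that $z_{\varepsilon,ij}(-h)\to \dfrac{\langle 1/q^{2}(0)\rangle}{\widehat{\mathcal R}}\sinh(-\widehat{\mathcal R}h)=-\dfrac{\langle 1/q^{2}(0)\rangle}{\widehat{\mathcal R}}\sinh(\widehat{\mathcal R}h)$, so the denominator itself is negative (the paper makes this explicit when it writes ``$z_{\varepsilon,ij}(-h)$ by $-\langle 1/q^{2}(0)\rangle\sinh(\widehat{\mathcal R}h)/\widehat{\mathcal R}$''); once you track that sign correctly, your remark about a cancellation of $\langle q^{2}(0)\rangle$ becomes unnecessary and the limit falls out directly.
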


\begin{proof}
Using the estimates of Proposition \ref{proposition2}, we can replace, when $%
\varepsilon $ is small enough, $w_{\varepsilon ,ij}\left( -h\right) $ by $%
\cosh \left( \widehat{\mathcal{R}}h\right) $, $z_{\varepsilon ij}\left(
-h\right) $ by $-\left\langle 1/q^{2}\left( 0\right) \right\rangle \sinh
\left( \widehat{\mathcal{R}}h\right) /\widehat{\mathcal{R}}$, $%
w_{\varepsilon ,ij}^{\prime }\left( -h\right) $ by the quantity $-\frac{\exp
\left( hv_{\varepsilon ,ij}/D\right) }{\left( q_{i}q_{j}\right) \left(
\varepsilon ^{-\theta }h\right) }\frac{\widehat{\mathcal{R}}}{\left\langle
1/q^{2}\left( 0\right) \right\rangle }\sinh \left( \widehat{\mathcal{R}}%
h\right) $ and $z_{\varepsilon ,ij}^{\prime }\left( -h\right) $ by the
quantity $\frac{\exp \left( hv_{\varepsilon ,ij}/D\right) }{\left(
q_{i}q_{j}\right) \left( \varepsilon ^{-\theta }h\right) }\cosh \left( 
\widehat{\mathcal{R}}h\right) $.\ Thus%
\begin{equation*}
\begin{array}{l}
\dfrac{\partial u_{0,\varepsilon }}{\partial x_{3}}\mid _{x_{3}=-h}\left(
q_{i}q_{j}\right) \left( h\varepsilon ^{-\theta }\right) \\ 
\begin{array}{ll}
= & \left( 
\begin{array}{l}
u^{+}\left( x_{1},x_{2},0\right) w_{\varepsilon ,ij}^{\prime }\left(
-h\right) \\ 
\quad +\dfrac{u^{-}\left( x_{1},x_{2},-h\right) -u^{+}\left(
x_{1},x_{2},0\right) w_{\varepsilon ,ij}\left( -h\right) }{z_{\varepsilon
,ij}\left( -h\right) }z_{\varepsilon ,ij}^{\prime }\left( -h\right)%
\end{array}%
\right) \left( q_{i}q_{j}\right) \left( h\varepsilon ^{-\theta }\right) \\ 
\underset{\varepsilon \rightarrow 0}{\sim } & \left( 
\begin{array}{l}
-u^{+}\left( x_{1},x_{2},0\right) \dfrac{\exp \left( hv_{\varepsilon
,ij}/D\right) }{\left( q_{i}q_{j}\right) \left( \varepsilon ^{-\theta
}h\right) }\dfrac{\widehat{\mathcal{R}}}{\left\langle 1/q^{2}\left( 0\right)
\right\rangle }\sinh \left( \widehat{\mathcal{R}}h\right) \\ 
-\widehat{\mathcal{R}}\dfrac{u^{-}\left( x_{1},x_{2},-h\right) -u^{+}\left(
x_{1},x_{2},0\right) \cosh \left( \widehat{\mathcal{R}}h\right) }{%
\left\langle 1/q^{2}\left( 0\right) \right\rangle \sinh \left( \widehat{%
\mathcal{R}}h\right) } \\ 
\quad \times \dfrac{\exp \left( hv_{\varepsilon ,ij}/D\right) }{\left(
q_{i}q_{j}\right) \left( \varepsilon ^{-\theta }h\right) }\cosh \left( 
\widehat{\mathcal{R}}h\right)%
\end{array}%
\right) \left( q_{i}q_{j}\right) \left( h\varepsilon ^{-\theta }\right) \\ 
= & \dfrac{\widehat{\mathcal{R}}}{\left\langle 1/q^{2}\left( 0\right)
\right\rangle }\exp \left( hv_{\varepsilon ,ij}/D\right) \left( 
\begin{array}{l}
-u^{+}\left( x_{1},x_{2},0\right) \sinh \left( \widehat{\mathcal{R}}h\right)
\\ 
-\dfrac{u^{-}\left( x_{1},x_{2},-h\right) -u^{+}\left( x_{1},x_{2},0\right)
\cosh \left( \widehat{\mathcal{R}}h\right) }{\sinh \left( \widehat{\mathcal{R%
}}h\right) } \\ 
\quad \times \cosh \left( \widehat{\mathcal{R}}h\right)%
\end{array}%
\right) \\ 
= & \dfrac{\widehat{\mathcal{R}}}{\left\langle 1/q^{2}\left( 0\right)
\right\rangle \sinh \left( \widehat{\mathcal{R}}h\right) }\exp \left(
hv_{\varepsilon ,ij}/D\right) \\ 
& \quad \times \left( 
\begin{array}{l}
u^{+}\left( x_{1},x_{2},0\right) \left( -\left( \sinh \right) ^{2}\left( 
\widehat{\mathcal{R}}h\right) +\left( \cosh \right) ^{2}\left( \widehat{%
\mathcal{R}}h\right) \right) \\ 
-u^{-}\left( x_{1},x_{2},-h\right) \cosh \left( \widehat{\mathcal{R}}h\right)%
\end{array}%
\right) \\ 
= & \dfrac{\widehat{\mathcal{R}}}{\left\langle 1/q^{2}\left( 0\right)
\right\rangle \sinh \left( \widehat{\mathcal{R}}h\right) }\exp \left(
hv_{\varepsilon ,ij}/D\right) \left( u^{+}\left( x_{1},x_{2},0\right)
-u^{-}\left( x_{1},x_{2},-h\right) \cosh \left( \widehat{\mathcal{R}}%
h\right) \right) ,%
\end{array}%
\end{array}%
\end{equation*}%
whence%
\begin{equation*}
\begin{array}{l}
\underset{\varepsilon \rightarrow 0}{\lim }\underset{\left( i,j\right) \in
I_{\varepsilon }\left( \omega \right) }{\dsum }\dint\nolimits_{\Gamma
_{h,\varepsilon ,ij}^{-}\left( \omega \right) }\dfrac{\partial
u_{0,\varepsilon }}{\partial x_{3}}\mid _{x_{3}=-h}\left( q_{i}q_{j}\right)
\left( h\varepsilon ^{-\theta }\right) u_{\varepsilon }^{-}d\sigma \\ 
\quad =\dfrac{-1}{\left\langle 1/q^{2}\left( 0\right) \right\rangle }%
\dint_{\Sigma }\dfrac{\widehat{\mathcal{R}}}{\sinh \left( \widehat{\mathcal{R%
}}h\right) }\left( u^{-}\cosh \left( \widehat{\mathcal{R}}h\right)
-u^{+}\right) \exp \left( \dfrac{p_{0}^{+}-p_{0}^{-}}{D\langle
1/q^{2}\rangle }\dfrac{k_{0}}{\mu }\right) u_{0}^{-}dx^{\prime }.%
\end{array}%
\end{equation*}

Similarly, we get%
\begin{equation*}
\begin{array}{l}
\underset{\varepsilon \rightarrow 0}{\lim }\underset{\left( i,j\right) \in
I_{\varepsilon }\left( \omega \right) }{\dsum }\dint\nolimits_{\Gamma
_{0,\varepsilon ,ij}^{+}\left( \omega \right) }\dfrac{\partial
u_{0,\varepsilon }}{\partial x_{3}}\mid _{x_{3}=0}\left( q_{i}q_{j}\right)
\left( 0\right) u_{\varepsilon }^{+}d\sigma \\ 
\begin{array}{ll}
= & \underset{\varepsilon \rightarrow 0}{\lim }\underset{\left( i,j\right)
\in I_{\varepsilon }\left( \omega \right) }{\dsum }\dint\nolimits_{\Gamma
_{0,\varepsilon ,ij}^{+}\left( \omega \right) }\dfrac{u^{-}\left(
x_{1},x_{2},-h\right) -u^{+}\left( x_{1},x_{2},0\right) w_{\varepsilon
,ij}\left( -h\right) }{z_{\varepsilon ,ij}\left( -h\right) }\dfrac{1}{%
q_{i}q_{j}\left( 0\right) }\left( q_{i}q_{j}\right) \left( 0\right)
u_{\varepsilon }^{+}d\sigma \\ 
= & \underset{\varepsilon \rightarrow 0}{\lim }\underset{\left( i,j\right)
\in I_{\varepsilon }\left( \omega \right) }{\dsum }\dint\nolimits_{\Gamma
_{0,\varepsilon ,ij}^{+}\left( \omega \right) }\dfrac{u^{-}\left(
x_{1},x_{2},-h\right) -u^{+}\left( x_{1},x_{2},0\right) w_{\varepsilon
,ij}\left( -h\right) }{z_{\varepsilon ,ij}\left( -h\right) }u_{\varepsilon
}^{+}d\sigma \\ 
= & \underset{\varepsilon \rightarrow 0}{\lim }\underset{\left( i,j\right)
\in I_{\varepsilon }\left( \omega \right) }{\dsum }\dint\nolimits_{\Gamma
_{0,\varepsilon ,ij}^{+}\left( \omega \right) }-\widehat{\mathcal{R}}\dfrac{%
u^{-}\left( x_{1},x_{2},-h\right) -u^{+}\left( x_{1},x_{2},0\right) \cosh
\left( \widehat{\mathcal{R}}h\right) }{\left\langle 1/q^{2}\left( 0\right)
\right\rangle \sinh \left( \widehat{\mathcal{R}}h\right) }u_{\varepsilon
}^{+}d\sigma \\ 
= & \dfrac{-1}{\left\langle 1/q^{2}\left( 0\right) \right\rangle }%
\dint_{\Sigma }\dfrac{\widehat{\mathcal{R}}}{\sinh \left( \widehat{\mathcal{R%
}}h\right) }\left( u^{-}-u^{+}\cosh \left( \widehat{\mathcal{R}}h\right)
\right) u_{0}^{+}dx^{\prime },%
\end{array}%
\end{array}%
\end{equation*}%
which leads to the desired limit.
\end{proof}

Now, using the same methods as in the above subsection, we obtain the
following Theorem.

\begin{theorem}
The sequence $\left( u_{\varepsilon }\right) _{\varepsilon }$, where $%
u_{\varepsilon }$ is the solution of (\ref{eq9}), $\tau _{1}$-converges to
the solution $u_{0}$ of the problem%
\begin{equation}
\left\{ 
\begin{array}{rcll}
-\func{div}\left( \widehat{D}\nabla u_{0}^{+}\right) +v_{0,d}^{+}\cdot
\nabla u_{0}^{+}+\mathcal{R}u_{0}^{+} & = & \left\vert Z^{1}\right\vert f & 
\text{in }\Omega ^{+}, \\ 
-\func{div}\left( \widehat{D}\nabla u_{0}^{-}\right) +v_{0,d}^{-}\cdot
\nabla u_{0}^{-}+\mathcal{R}u_{0}^{-} & = & 0 & \text{in }\Omega _{h}^{-},
\\ 
-\widehat{D}\nabla u_{0}^{+}.e_{3}-h\left\langle q^{2}\left( 0\right)
\right\rangle \func{div}_{\tau }\left( D^{\ast }\nabla u_{0}^{+}\right) &  & 
&  \\ 
+h\left\langle q^{2}\left( 0\right) \right\rangle \left( v_{0,f}\right)
_{\tau }\cdot \nabla _{\tau }u_{0}^{+} & = & \left( 
\begin{array}{l}
\dfrac{D}{\left\langle 1/q^{2}\left( 0\right) \right\rangle }\dfrac{\widehat{%
\mathcal{R}}}{\sinh \left( \widehat{\mathcal{R}}h\right) } \\ 
\times \left( u_{0}^{+}\cosh \left( \widehat{\mathcal{R}}h\right)
-u_{0}^{-}A\right)%
\end{array}%
\right) & \text{on }\Gamma _{0}^{+}, \\ 
D^{\ast }\nabla _{\tau }u_{0}^{+}\cdot n_{\tau } & = & 0 & \text{on }%
\partial \Sigma , \\ 
\widehat{D}\nabla u_{0}^{-}\cdot e_{3} & = & \left( 
\begin{array}{l}
\dfrac{D}{\left\langle 1/q^{2}\left( 0\right) \right\rangle }\dfrac{\widehat{%
\mathcal{R}}}{\sinh \left( \widehat{\mathcal{R}}h\right) } \\ 
\times \left( u_{0}^{-}\cosh \left( \widehat{\mathcal{R}}h\right)
A-u_{0}^{+}\right)%
\end{array}%
\right) & \text{on }\Gamma _{h}^{-}, \\ 
u_{0}^{+} & = & 0 & \text{on }\Gamma ^{+}, \\ 
u_{0}^{-} & = & 0 & \text{on }\Gamma ^{-},%
\end{array}%
\right.  \label{asympt}
\end{equation}%
with $A=\exp \left( \frac{p_{0}^{+}-p_{0}^{-}}{D\left\langle
q^{2}\right\rangle \left\langle 1/q^{2}\right\rangle }\frac{k_{0}}{\mu }%
\right) $, $u_{0}^{+}:=u_{0}\mid _{\Omega ^{+}}$ and $u_{0}^{-}:=u_{0}\mid
_{\Omega _{h}^{-}}$.
\end{theorem}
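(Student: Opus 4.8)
The plan is to repeat, with the obvious modifications, the energy argument used for Theorem~\ref{theorem2}; the only genuinely new ingredients are the zeroth order term $\mathcal{R}u_{\varepsilon }$ in (\ref{eq9}) and the replacement of the exponential profile of (\ref{uepsijf}) by the reaction-adapted profiles $w_{\varepsilon ,ij},z_{\varepsilon ,ij}$ solving (\ref{equ44})--(\ref{equ45}). First I would start from the variational formulation of (\ref{eq9}),
\begin{equation*}
D\dint_{\Omega _{f}^{\varepsilon }\left( \omega \right) }\nabla u_{\varepsilon }\cdot \nabla \varphi \,dx+\dint_{\Omega _{f}^{\varepsilon }\left( \omega \right) }\left( v_{\varepsilon }\cdot \nabla u_{\varepsilon }\right) \varphi \,dx+\mathcal{R}\dint_{\Omega _{f}^{\varepsilon }\left( \omega \right) }u_{\varepsilon }\varphi \,dx=\dint_{\Omega _{f}^{+,\varepsilon }}f\varphi \,dx,
\end{equation*}
tested with $\varphi =u_{0,\varepsilon }$, the function built as in (\ref{equ42}) from an arbitrary $u\in C^{2}\left( \overline{\Omega }\right) $ vanishing on $\Gamma ^{+}\cup \Gamma ^{-}$, now with the fissure profile $\bar{u}_{\varepsilon ,ij}$ assembled from $w_{\varepsilon ,ij},z_{\varepsilon ,ij}$; as noted above, $\left( u_{0,\varepsilon }\right) _{\varepsilon }$ $\tau _{1}$-converges to $u$. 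Using $\func{div}\left( v_{\varepsilon }\right) =0$ and $v_{\varepsilon }\cdot n=0$ on $\partial \Omega _{f}^{\varepsilon }\left( \omega \right) $, the convective term becomes $-\int \left( v_{\varepsilon }\cdot \nabla u_{0,\varepsilon }\right) u_{\varepsilon }\,dx$, and by Lemma~\ref{estim-contam} together with the extension operator $P^{\varepsilon }$ a subsequence of $u_{\varepsilon }$ converges to $u_{0}^{\pm }$ strongly in $L^{2}\left( \Omega ^{\pm }\right) $ and weakly in $H^{1}\left( \Omega ^{\pm }\right) $.

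For the contributions over $\Omega ^{+}$ and $\Omega _{h}^{-}$ nothing essentially new happens: Lemma~\ref{coef-dif}(1)--(2) gives $\chi _{\pm }^{\varepsilon }D\nabla u_{\varepsilon }\rightharpoonup \widehat{D}\nabla u_{0}^{\pm }$, so the diffusion terms tend to $\int_{\Omega ^{+}}\widehat{D}\nabla u_{0}^{+}\cdot \nabla u^{+}\,dx+\int_{\Omega _{h}^{-}}\widehat{D}\nabla u_{0}^{-}\cdot \nabla u^{-}\,dx$; the compensated compactness lemma (\cite{Mur}), exactly as in Theorem~\ref{theorem2}, handles the convective terms and produces $\int_{\Omega ^{+}}\left( v_{0,d}^{+}\cdot \nabla u_{0}^{+}\right) u^{+}\,dx+\int_{\Omega _{h}^{-}}\left( v_{0,d}^{-}\cdot \nabla u_{0}^{-}\right) u^{-}\,dx$; and, since $\chi _{\pm }^{\varepsilon }\rightharpoonup \left\vert Z^{1}\right\vert $ weakly-$\ast $ while $u_{\varepsilon }\rightarrow u_{0}^{\pm }$ strongly in $L^{2}$, the new reaction term and the source pass to the limit, yielding the terms $\mathcal{R}\left\vert Z^{1}\right\vert \int_{\Omega ^{+}}u_{0}^{+}u^{+}\,dx+\mathcal{R}\left\vert Z^{1}\right\vert \int_{\Omega _{h}^{-}}u_{0}^{-}u^{-}\,dx$ and $\left\vert Z^{1}\right\vert \int_{\Omega ^{+}}fu^{+}\,dx$.

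The heart of the proof is the fissure contribution $D\int_{Y_{\varepsilon }\left( \omega \right) }\nabla u_{\varepsilon }\cdot \nabla u_{0,\varepsilon }\,dx-\int_{Y_{\varepsilon }\left( \omega \right) }\left( v_{\varepsilon ,s}\cdot \nabla u_{0,\varepsilon }\right) u_{\varepsilon }\,dx+\mathcal{R}\int_{Y_{\varepsilon }\left( \omega \right) }u_{\varepsilon }u_{0,\varepsilon }\,dx$. Using the local coordinates of Section~2 I would split it into a tangential part, treated verbatim as in Theorem~\ref{theorem2} via Lemma~\ref{coef-dif}(3) and giving $h\left\langle q^{2}\left( 0\right) \right\rangle \int_{\Sigma }D^{\ast }\nabla _{\tau }u_{0}^{+}\cdot \nabla _{\tau }u^{+}\,dx^{\prime }-h\left\langle q^{2}\left( 0\right) \right\rangle \int_{\Sigma }\left( \left( v_{0}\right) _{\tau }\cdot \nabla _{\tau }u^{+}\right) u_{0}^{+}\,dx^{\prime }$, and an $x_{3}$-part, which I would integrate by parts in $x_{3}$ on each $Y_{\varepsilon ,ij}\left( \omega \right) $ against $\bar{u}_{\varepsilon ,ij}$. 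Since $\bar{u}_{\varepsilon ,ij}$ is a linear combination of $w_{\varepsilon ,ij}$ and $z_{\varepsilon ,ij}$ it solves
\begin{equation*}
-D\frac{\partial }{\partial x_{3}}\left( \left( q_{i}q_{j}\right) \left( -\varepsilon ^{-\theta }x_{3}\right) \frac{\partial \bar{u}_{\varepsilon ,ij}}{\partial x_{3}}\right) -\left( q_{i}q_{j}\right) \left( -\varepsilon ^{-\theta }x_{3}\right) v_{\varepsilon ,ij}\frac{\partial \bar{u}_{\varepsilon ,ij}}{\partial x_{3}}+\mathcal{R}\left( q_{i}q_{j}\right) \left( -\varepsilon ^{-\theta }x_{3}\right) \bar{u}_{\varepsilon ,ij}=0;
\end{equation*}
substituting this relation, the bulk reaction integral cancels the reaction integral already present, the leftover convective discrepancy $\sum_{\left( i,j\right) }\int_{Y_{\varepsilon ,ij}\left( \omega \right) }\left( v_{\varepsilon ,ij}-\left( v_{\varepsilon ,s}\right) _{3}\right) q_{i}q_{j}\,\partial _{x_{3}}\bar{u}_{\varepsilon ,ij}\,u_{\varepsilon }\,dx$ tends to $0$ by Corollary~\ref{corollary1}, the definition (\ref{veloci}) and Lemma~\ref{conv-vites-fiss} (as in Theorem~\ref{theorem2}), and only the boundary integrals over $\Gamma _{0,\varepsilon ,ij}^{+}\left( \omega \right) $ and $\Gamma _{h,\varepsilon ,ij}^{-}\left( \omega \right) $ remain. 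Their limits are exactly those computed in the Lemma stated just above, which rests on the asymptotics of $w_{\varepsilon ,ij},z_{\varepsilon ,ij}$ and their derivatives from Proposition~\ref{proposition2}, and which produces the interface term carrying the factors $\widehat{\mathcal{R}}/\sinh \left( \widehat{\mathcal{R}}h\right) $, $\cosh \left( \widehat{\mathcal{R}}h\right) $ and $A=\exp \left( \frac{p_{0}^{+}-p_{0}^{-}}{D\left\langle q^{2}\right\rangle \left\langle 1/q^{2}\right\rangle }\frac{k_{0}}{\mu }\right) $.

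Collecting all these limits yields, for every smooth $u$ vanishing on $\Gamma $, the weak formulation characterising $\left( u_{0}^{+},u_{0}^{-}\right) $; by density of such $u$ in $H_{\Gamma ^{+}}^{1}\left( \Omega ^{+}\right) \times H_{\Gamma ^{-}}^{1}\left( \Omega _{h}^{-}\right) $ and well-posedness of the limit problem, the identity extends to all admissible test pairs, so the whole sequence $\tau _{1}$-converges, and an integration by parts gives (\ref{asympt}). The step I expect to be the main obstacle is precisely the $x_{3}$-integral over the fissures: the cancellation of the reaction terms and the identification of the exact interface coefficients hinge on $w_{\varepsilon ,ij}$ and $z_{\varepsilon ,ij}$ solving the one-dimensional reaction--diffusion--convection equation with exactly the geometric weight $q_{i}q_{j}$, so that the integration by parts leaves no bulk remainder -- which is what Proposition~\ref{proposition2} and the preceding Lemma are designed to provide.
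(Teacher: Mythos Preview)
Your proposal is correct and follows exactly the strategy the paper uses: the paper's own proof is the single sentence ``using the same methods as in the above subsection'' together with the preceding Lemma on the boundary integrals, and you have spelled out precisely those steps (variational formulation tested against $u_{0,\varepsilon}$, Lemma~\ref{coef-dif} and compensated compactness in $\Omega^{+}\cup\Omega_{h}^{-}$, integration by parts in $x_{3}$ in the fissures using that $\bar u_{\varepsilon,ij}$ solves the weighted one-dimensional equation, and Proposition~\ref{proposition2} plus the Lemma for the interface limits). One minor remark: your computation gives $\mathcal{R}\left\vert Z^{1}\right\vert$ for the bulk reaction coefficient, whereas the stated limit~(\ref{asympt}) carries only $\mathcal{R}$; your derivation is the consistent one with how the source term acquires its $\left\vert Z^{1}\right\vert$ factor.
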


\end{document}